\newtheorem{theorem}{Theorem}[section]
\newtheorem{lemma}[theorem]{Lemma}
\newtheorem{proposition}[theorem]{Proposition}
\newtheorem{corollary}[theorem]{Corollary}
\theoremstyle{definition}
\newtheorem{definition}[theorem]{Definition}
\newtheorem{remark}[theorem]{Remark}
\newtheorem{example}[theorem]{Example}
\newcommand{\op}[1]{\operatorname{#1}}
\newcommand{\newterm}{\textsf}
\newcommand{\dbcoh}[1]{\operatorname{D}^{\operatorname{b}}(\operatorname{coh }#1)}
\newcommand{\dqcoh}[1]{\operatorname{D}(\operatorname{Qcoh }#1)}
\newcommand{\dabsfact}[1]{\operatorname{D}^{\operatorname{abs}}(\operatorname{fact }#1)}
\newcommand{\dabsFact}[1]{\operatorname{D}^{\operatorname{abs}}(\operatorname{Fact }#1)}
\newcommand{\dcoFact}[1]{\operatorname{D}^{\operatorname{co}}(\operatorname{Fact }#1)}
\newcommand{\dconFact}[1]{\operatorname{D}^{\operatorname{ctr}}(\operatorname{Fact }#1)}
\def\Z{\op{\mathbb{Z}}}
\def\O{\op{\mathcal{O}}}
\def\P{\op{\mathbf{P}}}
\def\tif{\text{if } }
\title[Resolutions in factorization categories]{Resolutions in factorization categories}
\author[Ballard]{Matthew Ballard}
\address{
  \begin{tabular}{l}
   Matthew Ballard  \\ 
      \hspace{.1in} University of South Carolina, Department of Mathematics,  Columbia, SC USA \\
   \hspace{.1in} Email: {\bf ballard@math.sc.edu} \\
  \end{tabular}
}
\author[Deliu]{Dragos Deliu}
\address{
  \begin{tabular}{l}
   Dragos Deliu  \\ 
   \hspace{.1in} Universit\"at Wien, Fakult\"at f\"ur Mathematik,  Wien, \"Osterreich \\
   \hspace{.1in} Email: {\bf dragos.deliu@univie.ac.at} \\
  \end{tabular}
}
\author[Favero]{David Favero}
\address{
  \begin{tabular}{l}
   David Favero \\
   \hspace{.1in} University of Alberta, Department of Mathematics,  Edmonton, AB Canada \\
   \hspace{.1in} Email: {\bf favero@gmail.com} \\
  \end{tabular}
}
\author[Isik]{M. Umut Isik}
\address{
  \begin{tabular}{l}
   M. Umut Isik \\ 
   \hspace{.1in} Universit\"at Wien, Fakult\"at f\"ur Mathematik,  Wien, \"Osterreich \\
   \hspace{.1in} Email: {\bf mehmet.umut.isik@univie.ac.at} \\
  \end{tabular}
}
\author[Katzarkov]{Ludmil Katzarkov}
\address{
  \begin{tabular}{l}
   Ludmil Katzarkov \\ 
   \hspace{.1in} Universit\"at Wien, Fakult\"at f\"ur Mathematik,  Wien, \"Osterreich \\
   \hspace{.1in} Email: {\bf lkatzark@math.uci.edu} \\
  \end{tabular}
}
\numberwithin{equation}{section}
\begin{document}
\renewcommand{\labelenumi}{\emph{\alph{enumi})}}

\begin{abstract}
 Building upon ideas of Eisenbud, Buchweitz, Positselski, and others, we introduce the notion of a factorization category. We then develop some essential tools for working with factorization categories, including constructions of resolutions of factorizations from resolutions of their components and derived functors. Using these resolutions, we lift fully-faithfulness and equivalence statements from derived categories of Abelian categories to derived categories of factorizations. Some immediate geometric consequences include a realization of the derived category of a projective hypersurface as matrix factorizations over a noncommutative algebra and a generalization of a theorem of Baranovsky and Pecharich.
\end{abstract}

\maketitle

\section{Introduction}

Since their introduction by D. Eisenbud \cite{EisMF}, matrix factorizations have spread from commutative algebra into a wide range of fields. In theoretical physics, M. Kontsevich realized that matrix factorizations represent boundary conditions in Landau-Ginzburg models. In topology, matrix factorizations have been used to create knot and link invariants \cite{KR1,KR2}. In algebraic geometry, deep statements tying the geometry of projective hypersurfaces to matrix factorizations of their defining polynomial have been proven by D. Orlov \cite{Orl09}. In addition, through mirror symmetry, matrix factorizations allow access to the structure of Fukaya categories of symplectic manifolds, \cite{Sei08,Efi09,AAEKO,Sheridan}. 
%

The original concept of matrix factorizations can be generalized in various ways, e.g. to the stable module category \cite{Buc86}, the category of singularities \cite{Orl04},  or, in another direction towards more general spaces \cite{Pos1,Pos2, OrlMF}.

Much of the task of this paper is to repackage Positselski's ideas towards a general theory of matrix factorizations for any Abelian category, in particular to derive functors of factorizations as one would functors of Abelian categories. To this end, we introduce the notion of a factorization category for a triple $(\mathcal A, \Phi, w)$ where $\mathcal A$ is an Abelian category, $\Phi: \mathcal A \to \mathcal A$ is an autoequivalence, and $w : \op{Id} \to \Phi$ is a natural transformation.  By appropriately altering $\mathcal A$ and setting $w=0$, one fully recovers the usual construction of the derived category $\op{D}^{\op{b}}(\mathcal A)$. 

As factorization categories can rightly be viewed as a deformation of $\Phi$-twisted, two-periodic chain complexes over $\mathcal A$, one should be able to build resolutions in a straightforward manner from resolutions of the components of a factorization. A key development of this paper is to provide a construction of such resolutions, see Theorems \ref{theorem: strictification injective} and \ref{theorem: strictification projective}.  

Now consider two triples, as above, $(\mathcal A, \Phi, w)$ and $(\mathcal B, \Psi, v)$, and an additive functor, $\theta: \mathcal A \to \mathcal B$, such that
\begin{displaymath}
 \theta \circ \Phi \cong \Psi \circ \theta
\end{displaymath}
and 
\begin{displaymath}
 \theta(w_A) = v_{\theta(A)} : \theta(A) \to \theta(\Phi(A)) \cong \Psi(\theta(A)).
\end{displaymath}
for all objects, $A \in \mathcal A$.  Furthermore, assume that $\theta$ is left-exact, that $\mathcal A$ has small coproducts and enough injectives, and that coproducts of injectives are injective. We can then use these resolutions to prove that if the right derived functor of $\theta$ is fully-faithful then so is a ``right derived functor'' associated to $\theta$ between the derived categories of factorizations.  Moreover, if the right derived functor of $\theta$ is an equivalence induced by Abelian natural transformations,  then so is a ``right derived functor'' associated to $\theta$ between the derived categories of factorizations. We can also use these resolutions to construct a spectral sequence computing the morphism spaces in the derived categories of factorizations whose $E_1$-page consists of Ext-groups between the components of this factorization in the underlying Abelian category. 

From these results, we are able to lay much of the groundwork for working with these categories as one would with derived categories.  Moreover, one can deduce many results about factorization categories from results about the usual derived categories.  Indeed, as a special application to geometry, we provide a derived equivalence between any smooth projective hypersurface and matrix factorizations of a noncommutative algebra. In addition, we generalize the main result of \cite{BP}.

Our work is also foundational to understanding derived categories of gauged Landau-Ginzburg models in algebraic geometry, such as in recent works on variations of GIT quotients and on Homological Projective Duality \cite{Seg2, BFK12, BDFIK}. Indeed, bootstrapping properties of functors from derived categories to factorization categories already appeared in \cite{Seg2, BP}.

\section{Basics} \label{section: basics}

Let $\mathcal A$ be an Abelian category, 
\begin{displaymath}
 \Phi: \mathcal A \to \mathcal A
\end{displaymath}
be an autoequivalence of $\mathcal A$, and 
\begin{displaymath}
 w: \op{Id}_{\mathcal A} \to \Phi
\end{displaymath}
be a natural transformation from the identity functor to $\Phi$. We assume that 
\begin{displaymath}
 w_{\Phi(A)} = \Phi(w_A)
\end{displaymath}
for all $A \in \mathcal A$.

\begin{example} \label{ex: gauged LG model}
 Let $X$ be a smooth algebraic variety and $G$ be an algebraic group acting on $X$.  Consider a $G$-equivariant line bundle $\mathcal L$ on $X$.  Set $\mathcal A$ to be the category of $G$-equivariant coherent sheaves on $X$, $\Phi$ to be tensoring with $\mathcal L$, and $w$ to be a section of $\mathcal L$.
\end{example}

\begin{remark}
The above example is intended to be the category of $B$-branes on a gauged Landau-Ginzburg model. It is the one considered in \cite{BFK11, BFK13, BFK12,  BDFIK}.
\end{remark}

\begin{definition}
 A \newterm{factorization} of the triple, $(A, \Phi, w)$, consists of a pair of objects of $\mathcal A$, $E^{-1}$ and $E^0$, and a pair of morphisms,
 \begin{align*}
  \phi^{-1}_{E} &: \Phi^{-1}(E^{0}) \to E^{-1} \\
  \phi^0_{E} &: E^{-1} \to E^0
 \end{align*}
 such that
 \begin{align*}
  \phi^0_{E} \circ \phi^{-1}_{E} & = \Phi^{-1}(w_{E^0}) : \Phi^{-1}(E^{0}) \to E^0, \\
  \Phi(\phi^{-1}_{E}) \circ \phi_{E}^0 & = w_{E^{-1}} : E^{-1} \to \Phi(E^{-1}), 
 \end{align*}
 We shall often simply denote the factorization, $(E^{-1}, E^0, \phi_{E}^{-1}, \phi_{E}^0)$, by $E$. The objects, $E^0$ and $E^{-1}$, are called the \newterm{components of the factorization}. We also set
 \begin{displaymath}
  E^i := \begin{cases} \Phi^j(E^0) & \text{ if } i=2j \\ \Phi^j(E^{-1}) & \text{ if } i = 2j-1. \end{cases}
 \end{displaymath}
 If $E$ is an object for which $w_E = 0$, then we shall denote the factorization $(0,E,0,0)$ simply by $E$.

 A \newterm{morphism of factorizations}, $g: E \to F$, is a pair of morphisms in $\mathcal A$,
 \begin{align*}
  g^{-1} & : E^{-1} \to F^{-1} \\
  g^0 & : E^0 \to F^0,
 \end{align*}
 making the diagram,
 \begin{center}
 \begin{tikzpicture}[description/.style={fill=white,inner sep=2pt}]
  \matrix (m) [matrix of math nodes, row sep=3em, column sep=3em, text height=1.5ex, text depth=0.25ex]
  {  \Phi^{-1}(E^{0}) & E^{-1} & E^{0} \\
   \Phi^{-1}(F^{0}) & F^{-1} & F^{0} \\ };
  \path[->,font=\scriptsize]
  (m-1-1) edge node[above]{$\phi^{-1}_E$} (m-1-2) 
  (m-1-1) edge node[left]{$\Phi^{-1}(g^{0})$} (m-2-1)
  (m-2-1) edge node[above]{$\phi^{-1}_F$} (m-2-2)
  (m-1-2) edge node[above]{$\phi^{0}_E$} (m-1-3) 
  (m-1-2) edge node[left]{$g^{-1}$} (m-2-2)
  (m-2-2) edge node[above]{$\phi^{-1}_F$} (m-2-3) 
  (m-1-3) edge node[left]{$g^{0}$} (m-2-3)
  ;
 \end{tikzpicture} 
 \end{center}
 commute.
 
 We let $\op{Fact}(w)$ be the category of factorizations. If $\mathcal E$ is a full additive subcategory of $\mathcal A$ preserved by $\Phi$, we let $\op{Fact}(\mathcal E,w)$ be the full subcategory of $\op{Fact}(w)$ consisting of factorizations whose components lie in $\mathcal E$. The most common additive categories we will take are injective objects, where we will use the notation $\op{Fact}(\mathcal Inj \ w)$, and projective objects, where we will use the notation $\op{Fact}(\mathcal Proj \ w)$.
\end{definition}

\begin{lemma} \label{lemma: fact is an abelian category}
 The category, $\op{Fact}(w)$, is Abelian.
\end{lemma}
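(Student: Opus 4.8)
The strategy is the standard one for showing that a category of "diagrams with relations" inside an Abelian category is itself Abelian: one defines kernels and cokernels componentwise and checks that they again carry the structure of a factorization, that the induced maps are morphisms of factorizations, and that the usual axioms transfer from $\mathcal A$. Concretely, let $g\colon E\to F$ be a morphism of factorizations. I would define $K$ by $K^{-1}:=\ker(g^{-1})$ and $K^0:=\ker(g^0)$, with structure maps $\phi^{-1}_K,\phi^0_K$ obtained by restriction: since $\Phi$ is an autoequivalence it is exact, so $\Phi^{-1}(K^0)=\ker(\Phi^{-1}(g^0))$, and the square defining a morphism of factorizations shows $\phi^{-1}_F$ and $\phi^0_F$ send the relevant kernel subobjects into the kernel subobjects, inducing $\phi^{-1}_K\colon\Phi^{-1}(K^0)\to K^{-1}$ and $\phi^0_K\colon K^{-1}\to K^0$. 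The factorization identities $\phi^0_K\circ\phi^{-1}_K=\Phi^{-1}(w_{K^0})$ and $\Phi(\phi^{-1}_K)\circ\phi^0_K=w_{K^{-1}}$ follow by restricting the corresponding identities for $E$ and using naturality of $w$ (that $w$ restricted to the subobject $K^i\hookrightarrow E^i$ agrees with $w_{K^i}$). Dually, $C^{-1}:=\op{coker}(g^{-1})$, $C^0:=\op{coker}(g^0)$ gives a cokernel factorization, using that $\Phi^{-1}$ is right exact to identify $\Phi^{-1}(C^0)$ with the appropriate cokernel.

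**Verifying the axioms.** With kernels and cokernels in hand, I would check the remaining Abelian axioms. That $\op{Fact}(w)$ is additive is immediate: the zero factorization $(0,0,0,0)$ is a zero object, and the biproduct of $E$ and $F$ is $(E^{-1}\oplus F^{-1}, E^0\oplus F^0, \phi^{-1}_E\oplus\phi^{-1}_F,\phi^0_E\oplus\phi^0_F)$, which satisfies the factorization identities since $\Phi$ commutes with finite biproducts and $w$ is additive on them. The universal properties of $K=\ker g$ and $C=\op{coker} g$ in $\op{Fact}(w)$ reduce to the componentwise universal properties in $\mathcal A$ together with the (routine) observation that a pair of $\mathcal A$-morphisms factoring through the componentwise (co)kernels automatically commutes with the structure maps, hence is a morphism of factorizations. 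Finally, the key axiom: every monomorphism is the kernel of its cokernel and every epimorphism is the cokernel of its kernel. A morphism $g$ is mono (resp. epi) in $\op{Fact}(w)$ iff $g^{-1}$ and $g^0$ are mono (resp. epi) in $\mathcal A$ — one direction is clear, and the other follows by testing against factorizations of the form $(0,A,0,0)$ and $(A,0,0,0)$ for $w_A=0$, or more simply against the componentwise kernel/cokernel. Then the axiom follows by applying the corresponding axiom in $\mathcal A$ in each component and noting that the canonical isomorphism $\op{coim}\cong\op{im}$ is componentwise, hence a morphism of factorizations.

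**Main obstacle.** None of the individual steps is deep; the real work — and the only place where the hypothesis $w_{\Phi(A)}=\Phi(w_A)$ and the naturality of $w$ genuinely enter — is the bookkeeping that shows the restricted/corestricted structure maps $\phi^{\bullet}_K$, $\phi^{\bullet}_C$ actually satisfy the two factorization relations, and that this is compatible under $\Phi$ with the periodicity convention $E^i$. I expect that to be the most error-prone part, since one must track how $w$ behaves on subobjects and quotients and confirm that "$w_{K^0}$" computed intrinsically in $\mathcal A$ coincides with the map induced by $w_{E^0}$; this is exactly naturality of $w$ applied to the inclusion $K^0\hookrightarrow E^0$ (resp. the projection $E^0\twoheadrightarrow C^0$). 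Once that compatibility is nailed down, everything else is a diagram chase that descends from $\mathcal A$ componentwise.
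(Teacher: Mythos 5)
Your proposal is correct and follows exactly the paper's (very terse) argument: kernels and cokernels are formed componentwise, the structure maps restrict and corestrict using exactness of $\Phi$ and naturality of $w$, and the Abelian axioms descend from $\mathcal A$. The paper states this in one line; your elaboration of the compatibility of $w$ with subobjects and quotients is the right place to put the care.
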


\begin{proof}
 For a morphism, $g: E \to F$, the componentwise kernel is naturally a factorization, as is the componentwise cokernel. This endows $\op{Fact}(w)$ with the structure of an Abelian category.
\end{proof}

There is a natural notion of translation, or shift, of a factorization.

\begin{definition} 
 Let $\mathcal E$ be a full additive subcategory of $\mathcal A$. Let $[1]$ be the auto-equivalence of $\op{Fact}(\mathcal E,w)$ defined as
 \begin{align*}
  [1]: \op{Fact}(w) & \to \op{Fact}(w) \\
  E & \mapsto E[1] := (E^0,\Phi(E^{-1}), -\phi^0_{E}, -\Phi(\phi^{-1}_{E})) \\
  g & \mapsto g[1] := (g^0,\Phi(g^{-1})).
 \end{align*}
 The functor, $[n]$, is the $n$-fold composition of $[1]$.
\end{definition}

\begin{definition}
 There is also a dg-category associated with factorizations. It is denoted by $\textbf{Fact}(\mathcal E, w)$. The objects are the same as $\op{Fact}(\mathcal E, w)$. Given two factorizations, $E, F \in \textbf{Fact}(w)$, we set 
 \begin{displaymath}
  \op{Hom}_{\textbf{Fact}(w)}^n(E,F) :=\textbf{Hom}_w^n(E,F) := \op{Hom}_{\mathcal A}(E^{-1},(F[n])^{-1}) \oplus \op{Hom}_{\mathcal A}(E^{0},(F[n])^{0}).
 \end{displaymath}
 The differential on $\textbf{Hom}_w^*(E,F)$ takes the pair, $g^{-1}: E^{-1} \to (F[n])^{-1}, g^0: E^0 \to (F[n])^0$, to 
 \begin{align*}
  \phi_{F[n]}^0 \circ g^{-1} - (-1)^n g^0 \circ \phi_E^0 & : E^{-1} \to F[n]^0 = F[n+1]^{-1} \\
  \Phi(\phi_{F[n]}^{-1}) \circ g^0 - (-1)^n \Phi(g^{-1}) \circ \Phi(\phi^{-1}_E) & : E^0 \to \Phi(F[n]^{-1}) = F[n+1]^0.
 \end{align*}
\end{definition}

And a natural cone construction. 

\begin{definition}
 For any morphism, $g : E \to F$, we write, $C(g)$, for the factorization defined as 
\begin{displaymath}
 C(g):= \left( E^{0} \oplus F^{-1}, \Phi(E^{-1}) \oplus F^0, \begin{pmatrix} -\phi_{E}^0 & 0 \\ g^{-1} & \phi_{F}^{-1} \end{pmatrix}, \begin{pmatrix} -\Phi(\phi_{E}^{-1}) & 0 \\ g^0 & \phi_{F}^0 \end{pmatrix} \right).
\end{displaymath}
\end{definition}

\begin{definition}
 A \newterm{homotopy}, $h$, between two morphisms, $g_1,g_2: E \to F$, is a pair of morphisms,
 \begin{align*}
  h^{-1} & : E^{-1} \to \Phi^{-1}(F^0) \\
  h^{0} & : E^{0} \to F^{-1},
 \end{align*}
 such that 
 \begin{align*}
  g_1^{-1}-g_2^{-1} & = h^0 \circ \phi_E^0 + \phi_F^{-1} \circ h^{-1} \\
  g_1^{0}-g_2^{0} & = \phi_F^0 \circ h^0 + \Phi(h^{-1}) \circ \Phi(\phi^{-1}_E).
 \end{align*}
 
 We let $\op{K}(\op{Fact}\mathcal E, w)$ be the homotopy category of $\op{Fact}(\mathcal E, w)$. Note that the homotopy category of the dg-category, $\textbf{Fact}(\mathcal E, w)$, is $\op{K}(\op{Fact}\mathcal E, w)$.
\end{definition}

\begin{proposition} \label{prop: Kfact is a triangulated category}
 The translation, $[1]$, and cones defined above give $\op{K}(\op{Fact}\mathcal E, w)$ the structure of a triangulated category. 
\end{proposition}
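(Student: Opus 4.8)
The plan is to recognize $\op{K}(\op{Fact}\mathcal E, w)$ as the homotopy category $H^0(\textbf{Fact}(\mathcal E,w))$ of the dg-category $\textbf{Fact}(\mathcal E,w)$, and then to show that this dg-category is strongly pretriangulated, so that the triangulated structure is produced by a general mechanism: the homotopy category of a dg-category that has a zero object, is closed under the shift functor, and admits cones of all closed degree-zero morphisms --- all represented by honest objects of the dg-category --- is triangulated, with distinguished triangles declared to be those isomorphic to $E \xrightarrow{g} F \to C(g) \to E[1]$. Here the translation is $[1]$ and the cone of a morphism $g$ is the explicit factorization $C(g)$ defined above; note that since $\mathcal E$ is $\Phi$-stable the objects $C(g)$ and $E[1]$ again have components in $\mathcal E$, and $\op{K}(\op{Fact}\mathcal E, w)$ is additive, being $H^0$ of a dg-category (equivalently, the quotient of the additive category $\op{Fact}(\mathcal E,w)$ by the ideal of null-homotopic maps).

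First I would check that the zero factorization $(0,0,0,0)$ is a zero object and that $[1]$, together with its inverse $[-1]$, is the shift: unwinding the definition of $\textbf{Hom}_w^\bullet$, one produces a natural isomorphism of complexes $\textbf{Hom}_w^\bullet(E[1],F) \cong \textbf{Hom}_w^{\bullet+1}(E,F)$, in which the signs $-\phi_E^0$, $-\Phi(\phi_E^{-1})$ built into $E[1]$ reproduce precisely the Koszul sign appearing in the differential. Next, for a closed degree-zero morphism $g : E \to F$ --- that is, a morphism of factorizations --- I would verify that the evident maps $F \hookrightarrow C(g)$ and $C(g) \twoheadrightarrow E[1]$ coming from the block decompositions $C(g)^{-1}=E^0\oplus F^{-1}=E[1]^{-1}\oplus F^{-1}$ and $C(g)^0 = \Phi(E^{-1})\oplus F^0 = E[1]^0 \oplus F^0$ are morphisms of factorizations, and that $C(g)$ has the universal property of a mapping cone in the dg sense: there is a natural isomorphism $\textbf{Hom}_w^\bullet(C(g),T) \cong \op{Cone}\big(\textbf{Hom}_w^\bullet(F,T)\xrightarrow{g^*}\textbf{Hom}_w^\bullet(E,T)\big)[-1]$. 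This is a direct computation with the block form of the two differentials of $C(g)$; the twist $\Phi$ and the potential $w$ enter only through the structure maps $\phi_E^\bullet,\phi_F^\bullet$ and do not interfere with the bookkeeping.

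Granting these facts, $\textbf{Fact}(\mathcal E,w)$ is strongly pretriangulated and $\op{K}(\op{Fact}\mathcal E, w)$ inherits a triangulated structure with shift $[1]$ and the distinguished triangles described above. A reader who prefers a self-contained argument may instead verify (TR1)--(TR4) by hand, following the classical case of chain complexes over an Abelian category essentially verbatim: (TR1) from the triangle on an identity morphism and the cone construction, together with the isomorphism-closure built into the definition; (TR2) from the standard homotopy equivalence between the cone of $F\hookrightarrow C(g)$ and $E[1]$, which turns the connecting morphism into $-g[1]$; (TR3) by lifting a square that commutes up to homotopy to an actual morphism of cones; and (TR4) by the usual octahedral diagram chase relating $C(g)$, $C(h\circ g)$ and $C(h)$ for a composable pair $E\xrightarrow{g}F\xrightarrow{h}G$.

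In either approach the main obstacle is the octahedral axiom. In the dg-framework it is subsumed in strong pretriangulatedness but ultimately rests on the cone computation above; in the hands-on approach it is a lengthy --- though entirely mechanical --- manipulation of $2\times 2$ block matrices and explicit null-homotopies. What makes this painless is that every homotopy required is literally the one from the theory of ordinary complexes, since each is assembled out of the maps $g,h,\phi_E,\phi_F,\phi_G$ alone; the new ingredients $\Phi$ and $w$ live inside the objects and their structure maps and never occur inside a homotopy, so no cancellations or sign subtleties arise beyond those already present in the classical proof.
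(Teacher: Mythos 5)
Your argument is correct, but it is packaged differently from the paper's. The paper disposes of this proposition in one line by declaring the verification of (TR1)--(TR4) to be ``completely analogous to the standard proof that homotopy categories of chain complexes are triangulated'' and citing Gelfand--Manin; that is exactly your second, hands-on route. Your primary route instead invokes the Bondal--Kapranov mechanism: exhibit $\textbf{Fact}(\mathcal E, w)$ as a dg-category with a zero object, closed under shifts in both directions, and admitting representable cones of closed degree-zero morphisms, and conclude that it is strongly pretriangulated, so that $H^0 = \op{K}(\op{Fact}\mathcal E, w)$ is triangulated for general reasons. The substance is the same either way --- one must check that $E[1]$ and $C(g)$ again have components in $\mathcal E$ (which you do, using that $\mathcal E$ is additive and $\Phi$-stable) and carry out the block-matrix computation identifying $\textbf{Hom}_w^{\bullet}(C(g),T)$ with the cocone of $g^{*}$ --- but the dg-categorical phrasing buys you the octahedral axiom for free rather than by a diagram chase, and it makes transparent why the twist $\Phi$ and the potential $w$ never interfere: they live only in the structure maps of the objects. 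One small bookkeeping remark: with the paper's conventions one finds $\textbf{Hom}_w^{\bullet}(E[1],F) \cong \textbf{Hom}_w^{\bullet-1}(E,F)$ (equivalently $\textbf{Hom}_w^{\bullet}(E,F[1]) \cong \textbf{Hom}_w^{\bullet+1}(E,F)$), so the shift in your displayed isomorphism should land on the other side; this does not affect the argument.
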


\begin{proof}
 This is completely analogous to the standard proof that homotopy categories of chain complexes are triangulated so we refer the reader to \cite[Chapter 4]{GM}.
\end{proof}

\begin{definition} \label{definition: totalizations}
 Let
 \begin{equation} \label{equation: chain complex of factorizations}
  \begin{tikzpicture}[description/.style={fill=white,inner sep=2pt}]
  \matrix (m) [matrix of math nodes, row sep=3em, column sep=3em, text height=1.5ex, text depth=0.25ex]
  { \cdots & E_s & E_{s+1} & \cdots & E_t & E_{t+1} & \cdots \\  };
  \path[->,font=\scriptsize]
  (m-1-1) edge node[above]{$g_{s}$} (m-1-2) 
  (m-1-2) edge node[above]{$g_{s+1}$} (m-1-3)
  (m-1-3) edge node[above]{$g_{s+2}$} (m-1-4)
  (m-1-4) edge node[above]{$g_{t}$} (m-1-5)
  (m-1-5) edge node[above]{$g_{t+1}$} (m-1-6) 
  (m-1-6) edge node[above]{$g_{t+2}$} (m-1-7) 
  ;
  \end{tikzpicture} 
 \end{equation}
 be a complex of factorizations, i.e. a sequence of morphisms in $\op{Fact}(w)$ satisfying
 \begin{displaymath}
  g_{i+1} \circ g_i = 0
 \end{displaymath}
 for all $i \in \Z$. We have two ways to totalize this complex of factorizations into new individual factorization. 
 
 The \newterm{$\bigoplus$-totalization} of Equation~\eqref{equation: chain complex of factorizations} is the factorization $\op{tot}^{\bigoplus}(E_{\bullet}) = T$ whose components are given by the formula
 \begin{displaymath}
  T = \bigoplus_{l \in \Z} E_l[-l]
 \end{displaymath}
 precisely
 \begin{align*}
  T^{-1} & = \bigoplus_{2l} \Phi^{-l}(E_{2l}^{-1}) \oplus \bigoplus_{2l+1} \Phi^{-l-1}(E_{2l+1}^0) \\
  T^0 & = \bigoplus_{2l} \Phi^{-l}(E_{2l}^{0}) \oplus \bigoplus_{2l+1} \Phi^{-l}(E_{2l+1}^{-1}).
 \end{align*}
 The morphisms $\phi^{-1}_T,\phi^0_T$ defining $T$ are determined uniquely by the conditions
 \begin{align*}
  \phi^{-1}_T|_{\Phi^{-l-1}(E^0_{2l})} = \Phi^{-l}(\phi^{-1}_{E_{2l}}) \oplus \Phi^{-l-1}(g^{0}_{2l+1}) & : \Phi^{-l-1}(E^0_{2l}) \to T^{-1} \\
  \phi^{-1}_T|_{\Phi^{-l-1}(E^{-1}_{2l+1})} = -\Phi^{-l-1}(\phi^{0}_{E_{2l+1}}) \oplus \Phi^{-l-1}(g^{-1}_{2l+2}) & : \Phi^{-l-1}(E^{-1}_{2l+1}) \to T^{-1} \\ 
  \phi^{0}_T|_{\Phi^{-l}(E^{-1}_{2l})} = \Phi^{-l}(\phi^{0}_{E_{2l}}) \oplus \Phi^{-l}(g^{-1}_{2l+1}) & : \Phi^{-l}(E^{-1}_{2l}) \to T^{0} \\
  \phi^{0}_T|_{\Phi^{-l-1}(E^{0}_{2l+1})} = -\Phi^{-l}(\phi^{-1}_{E_{2l+1}}) \oplus \Phi^{-l-1}(g^{0}_{2l+2}) & : \Phi^{-l-1}(E^{-1}_{2l+1}) \to T^{0}.
 \end{align*}

 The \newterm{$\prod$-totalization} of Equation~\eqref{equation: chain complex of factorizations} is the factorization $\op{tot}^{\prod}(E_{\bullet}) = T$ whose components are given by the formula
 \begin{displaymath}
  T = \prod_{l \in \Z} E_l[-l]
 \end{displaymath}
 \begin{align*}
  T^{-1} & = \prod_{2l} \Phi^{-l}(E_{2l}^{-1}) \times \prod_{2l+1} \Phi^{-l-1}(E_{2l+1}^0) \\
  T^0 & = \prod_{2l} \Phi^{-l}(E_{2l}^{0}) \times \prod_{2l+1} \Phi^{-l}(E_{2l+1}^{-1}).
 \end{align*}
 The morphisms $\phi^{-1}_T,\phi^0_T$ defining $T$ are determined uniquely by the conditions that 
 \begin{align*}
  \pi_{2l}^{-1} \circ \phi^{-1}_T = (\Phi^{-l}(\phi^{-1}_{E_{2l}}) + \Phi^{-l}(g^{-1}_{2l})) \circ (\Phi^{-1}(\pi_{2l}^0) \oplus \Phi^{-1}(\pi^{0}_{2l-1})) & : \Phi^{-1}(T^0) \to \Phi^{-l}(E^{-1}_{2l}) \\
  \pi_{2l+1}^{-1} \circ \phi^{-1}_T = (-\Phi^{-l-1}(\phi^{0}_{E_{2l+1}}) + \Phi^{-l-1}(g^{0}_{2l+1})) \circ (\Phi^{-1}(\pi_{2l}^0) \oplus \Phi^{-1}(\pi^{0}_{2l-1})) & : \Phi^{-1}(T^0) \to \Phi^{-l-1}(E^{0}_{2l+1}) \\
  \pi_{2l}^{0} \circ \phi^{0}_T = (\Phi^{-l}(\phi^{0}_{E_{2l}}) + \Phi^{-l}(g^{0}_{2l})) \circ (\pi_{2l}^{-1} \oplus \pi^{-1}_{2l-1}) & : T^{-1} \to \Phi^{-l}(E^{0}_{2l}) \\
  \pi_{2l+1}^{0} \circ \phi^{0}_T = (-\Phi^{-l-1}(\phi^{-1}_{E_{2l+1}}) + \Phi^{-l}(g^{-1}_{2l+1})) \circ (\pi_{2l}^{-1} \oplus \pi^{-1}_{2l-1}) & : T^{-1} \to \Phi^{-l}(E^{-1}_{2l+1}) 
 \end{align*}
 where $\pi^l_k$ denotes the projection onto the $k$-th component of $T^{l}$.
 
 If the complex from Equation~\eqref{equation: chain complex of factorizations} is bounded, then the $\bigoplus$-totalization and the $\prod$-totalization coincide. In this case, we call the result simply the \newterm{totalization} and denote it by $\op{tot}(E_{\bullet})$.
 
 Note that the two forms of totalization extend naturally to provide exact functors 
 \begin{displaymath}
  \op{tot}^{\bigoplus},\op{tot}^{\prod}: \op{Ch}(\op{Fact} w) \to \op{Fact}(w).
 \end{displaymath}
\end{definition}

These definitions are due to Positselski, see \cite{Pos1,Pos2}.

\begin{definition}
 Let $\mathcal E$ be a full additive subcategory of $\mathcal A$ preserved by $\Phi$. A factorization is called \newterm{$\mathcal E$-totally acyclic} if it lies in the smallest thick subcategory of $\op{K}(\op{Fact} \mathcal E,w)$ containing the totalizations of all bounded exact complexes from $\op{Fact}(\mathcal E, w)$. We let $\op{Acycl}(\mathcal E, w)$ denote the smallest thick subcategory of $\op{K}(\op{Fact}(\mathcal E, w))$ consisting of $\mathcal E$-totally acyclic factorizations. The \newterm{absolute derived category of $\mathcal E$-factorizations} of $(\mathcal E, \mathcal A, \Phi, w)$ is the Verdier quotient,
 \begin{displaymath}
  \dabsFact{\mathcal E, w} := \op{K}( \op{Fact} \mathcal E, w )/\op{Acycl}(\mathcal E, w).
 \end{displaymath}

 A morphism in $\op{Fact}(\mathcal E, w)$ which becomes an isomorphism in $\dabsFact{\mathcal E, w}$ will be called a \newterm{quasi-isomorphism}, in analogy with the usual derived category. Similarly, two factorizations which are isomorphic in $\dabsFact{\mathcal E, w}$ are called \newterm{quasi-isomorphic}. In the case where $\mathcal E = \mathcal A$, we will simply omit $\mathcal A$ from the notation.
\end{definition}

\begin{definition}
 Let $\mathcal E$ be a full additive subcategory of $\mathcal A$ preserved by $\Phi$. Assume that small coproducts exist in $\mathcal A$. A factorization is called \newterm{$\mathcal E$ co-acyclic} if it lies in the smallest thick subcategory of $\op{K}(\op{Fact} \mathcal E, w)$ containing the totalizations of all bounded exact complexes from $\op{Fact}(\mathcal E, w)$ and closed under taking small coproducts. We let $\op{Co-acycl}(\mathcal E, w)$ denote the thick subcategory of $\op{K}(\op{Fact} \mathcal E, w)$ consisting of $\mathcal E$ co-acyclic factorizations. The \newterm{co-derived category of $\mathcal E$-factorizations} of $(\mathcal E,\mathcal A, \Phi, w)$ is the Verdier quotient,
 \begin{displaymath}
  \dcoFact{\mathcal E, w} := \op{K}( \op{Fact} \mathcal E, w )/\op{Co-acycl}(\mathcal E, w).
 \end{displaymath}

 A morphism in $\op{Fact}(\mathcal E, w)$ which becomes an isomorphism in $\dcoFact{\mathcal E, w}$ will be called a \newterm{co-quasi-isomorphism}. Similarly, two factorizations which are isomorphic in $\dcoFact{\mathcal E, w}$ are called \newterm{co-quasi-isomorphic}. In the case where $\mathcal E = \mathcal A$, we will simply omit $\mathcal A$ from the notation.
\end{definition}

\begin{definition}
 Let $\mathcal E$ be a full additive subcategory of $\mathcal A$ preserved by $\Phi$. Assume that small products exist in $\mathcal A$. A factorization is called \newterm{$\mathcal E$ contra-acyclic} if it lies in the smallest thick subcategory of $\op{K}(\op{Fact} \mathcal E , w)$ containing the totalizations of all bounded exact complexes from $\op{Fact}(\mathcal E, w)$ and closed under taking small products. We let $\op{Ctr-acycl}(\mathcal E,w)$ denote the thick subcategory of $\op{K}(\op{Fact} \mathcal E, w)$ consisting of acyclic factorizations.  The \newterm{contra-derived category of factorizations} of $(\mathcal E, \mathcal A, \Phi, w)$ is the Verdier quotient,
 \begin{displaymath}
  \dconFact{\mathcal E, w} := \op{K}( \op{Fact} \mathcal E, w )/\op{Ctr-acycl}(\mathcal E, w).
 \end{displaymath}

 A morphism in $\op{Fact}(\mathcal E, w)$ which becomes an isomorphism in $\dconFact{\mathcal E, w}$ will be called a \newterm{contra-quasi-isomorphism}. Similarly, two factorizations which are isomorphic in $\dcoFact{\mathcal E, w}$ are called \newterm{contra-quasi-isomorphic}. In the case where $\mathcal E = \mathcal A$, we will simply omit $\mathcal A$ from the notation.
\end{definition}

\begin{example}
 Let $\mathcal A$ be an Abelian category.  Let $\mathcal A^{\op{b}}$ be the category consisting of countably many objects  $a_i \in \mathcal A$ indexed by $\Z$ such that $a_i =0$ for all but finitely many $i$.  Let $\Phi : A^{\op{b}} \to A^{\op{b}}$ be the autoequivalence which shifts the indexing i.e. $\Phi(a)_i = a_{i-1}.$  Let $w=0$.  Then $\op{Fact}(0)$ is equal to $\op{Ch}^{\op{b}}(\mathcal A)$, the category of bounded complexes in $\mathcal A$.

 Furthermore, $\op{Acycl}(0)$ is nothing more than bounded acyclic complexes.  Hence the usual bounded derived category of $\mathcal A$ is nothing more than $\dabsFact{0} \cong \op{D}^{\op{b}}(\mathcal A)$.
\end{example}

\begin{remark}
 Let us attempt to provide some motivation for such definitions. Let us consider the derived category, $\op{D}(\mathcal A)$. It is the localization of $\op{K}(\mathcal A)$ at the class of quasi-isomorphisms. It can also be viewed as the Verdier quotient of $\op{K}(\mathcal A)$ by acyclic complexes. 
 
 How does one make an acyclic complex? One way is to take an exact sequence of complexes,
 \begin{displaymath}
  0 \to E_1 \to E_2 \to E_3 \to 0,
 \end{displaymath}
 and totalize the complex to get an object of $\op{Ch}(\mathcal A)$. This method of construction is fairly robust. Indeed, any finite acyclic complex is easily seen to be the totalization of an exact sequence of chain complexes. These are exactly the analogs of totally-acyclic factorizations. Thus, quotienting by totally-acyclic factorizations should be viewed as the analog of quotienting $\op{K}(\mathcal A)$ by the thick subcategory of finite acyclic complexes.

 To deal with unbounded complexes, we have to take some form of limit of totalizations of bounded exact complexes. Choice of direction of this limit naturally forces one to study infinite products or coproducts of bounded exact complexes. This connection motivates the definitions of co-acyclic and contra-acyclic complexes.
\end{remark}

\begin{lemma} \label{lemma: infinite tots are acyclic}
 Let 
 \begin{center} 
  \begin{tikzpicture}[description/.style={fill=white,inner sep=2pt}]
  \matrix (m) [matrix of math nodes, row sep=3em, column sep=3em, text height=1.5ex, text depth=0.25ex]
  { \cdots & E_s & E_{s+1} & \cdots & E_t & E_{t+1} & \cdots \\ };
  \path[->,font=\scriptsize]
  (m-1-1) edge node[above]{$g_{s}$} (m-1-2) 
  (m-1-2) edge node[above]{$g_{s+1}$} (m-1-3)
  (m-1-3) edge node[above]{$g_{s+2}$} (m-1-4)
  (m-1-4) edge node[above]{$g_{t}$} (m-1-5)
  (m-1-5) edge node[above]{$g_{t+1}$} (m-1-6) 
  (m-1-6) edge node[above]{$g_{t+2}$} (m-1-7) 
  ;
  \end{tikzpicture} 
 \end{center}
 be an exact complex over $\op{Fact}(w)$. If $\mathcal A$ possesses small coproducts and the complex $E_{\bullet}$ is bounded below, then $\op{tot}^{\bigoplus}(E_{\bullet})$ is co-acyclic. If $\mathcal A$ possesses small products and the complex $E_{\bullet}$ is bounded above, then $\op{tot}^{\prod}(E_{\bullet})$ is contra-acyclic.
\end{lemma}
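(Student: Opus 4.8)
The plan is to realize $\op{tot}^{\bigoplus}(E_{\bullet})$ as a filtered colimit of totalizations of \emph{bounded} exact complexes, and then to exploit the defining closure properties of the co-acyclic subcategory: $\op{Co-acycl}(w)$ is a thick (hence triangulated) subcategory of $\op{K}(\op{Fact} w)$ which contains $\op{Acycl}(w)$ and is closed under small coproducts. I will treat only the first assertion; the second follows by the dual argument (equivalently, by passing to the opposite category), replacing coproducts, filtered colimits, monomorphisms, ``bounded below'', $\op{tot}^{\bigoplus}$ and $\op{Co-acycl}(w)$ by products, filtered limits, epimorphisms, ``bounded above'', $\op{tot}^{\prod}$ and $\op{Ctr-acycl}(w)$ respectively.

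So assume $E_i = 0$ for $i < s$. Since $\op{Fact}(w)$ is Abelian by Lemma~\ref{lemma: fact is an abelian category}, for $k \geq s$ I form the subfactorization $Z_k := \ker g_k \subseteq E_k$; exactness of $E_{\bullet}$ identifies $Z_k$ with $\im g_{k-1}$, so $Z_s = 0$ and there are short exact sequences $0 \to Z_k \to E_k \xrightarrow{\bar g_k} Z_{k+1} \to 0$ in $\op{Fact}(w)$, with $\bar g_k$ the corestriction of $g_k$. For $n \geq s$ let $E^{(n)}_{\bullet}$ be the bounded complex
\begin{displaymath}
 0 \to E_s \xrightarrow{g_s} E_{s+1} \to \cdots \to E_n \xrightarrow{\bar g_n} Z_{n+1} \to 0,
\end{displaymath}
with $Z_{n+1}$ in degree $n+1$. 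A routine diagram chase using exactness of $E_{\bullet}$ shows $E^{(n)}_{\bullet}$ is exact, so $X_n := \op{tot}(E^{(n)}_{\bullet})$ lies in $\op{Acycl}(w) \subseteq \op{Co-acycl}(w)$. The evident monomorphisms of complexes $E^{(n)}_{\bullet} \hookrightarrow E^{(n+1)}_{\bullet}$ (the identity in degrees $\leq n$, the inclusion $Z_{n+1} \hookrightarrow E_{n+1}$ in degree $n+1$, and $0 \hookrightarrow Z_{n+2}$ in degree $n+2$) exhibit $E_{\bullet}$ as $\varinjlim_n E^{(n)}_{\bullet}$. Because $\op{tot}^{\bigoplus}$ is assembled from the exact autoequivalences $\Phi^{\pm j}$ and from coproducts, it commutes with filtered colimits, so $\op{tot}^{\bigoplus}(E_{\bullet}) = \varinjlim_n X_n$; and since each transition map is the identity outside two degrees, each $X_n \to X_{n+1}$ is componentwise a finite biproduct of monomorphisms, hence a monomorphism.

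Next I present this colimit in telescope form as the short exact sequence
\begin{displaymath}
 0 \to \bigoplus_{n \geq s} X_n \xrightarrow{1-\sigma} \bigoplus_{n \geq s} X_n \xrightarrow{\pi} \op{tot}^{\bigoplus}(E_{\bullet}) \to 0
\end{displaymath}
in $\op{Fact}(w)$, where $\sigma$ is induced by the transition maps (injectivity of $1-\sigma$ is the standard argument using its triangular shape). I regard this as a three-term bounded exact complex over $\op{Fact}(w)$, whose totalization therefore lies in $\op{Acycl}(w) \subseteq \op{Co-acycl}(w)$. Its two-term brutal subcomplex $[\,\bigoplus_n X_n \xrightarrow{1-\sigma} \bigoplus_n X_n\,]$ is a degreewise-split subcomplex with quotient $\op{tot}^{\bigoplus}(E_{\bullet})$ concentrated in a single degree; so applying $\op{tot}$ --- which is additive and commutes with shifts of complexes, hence with mapping cones --- converts this degreewise-split short exact sequence into a distinguished triangle in $\op{K}(\op{Fact} w)$ whose middle term is the above totally acyclic factorization and whose first term is, up to shift, the mapping cone of $1-\sigma \colon \bigoplus_n X_n \to \bigoplus_n X_n$. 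Since $\op{Co-acycl}(w)$ is closed under small coproducts it contains $\bigoplus_n X_n$, and being triangulated it then contains that mapping cone, hence it contains the third vertex of the triangle, which is a shift of $\op{tot}^{\bigoplus}(E_{\bullet})$. Thus $\op{tot}^{\bigoplus}(E_{\bullet})$ is co-acyclic.

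The step I expect to carry the weight is the passage from the bounded truncations to the infinite totalization: $\op{Acycl}(w)$ is \emph{not} closed under infinite coproducts, so one cannot merely take a colimit of the bounded statements. The device is to repackage the colimit as the single short exact sequence above, so that the only infinite coproduct appearing is exactly the one $\op{Co-acycl}(w)$ is built to absorb; the one genuinely non-formal point is then that totalizing this short exact sequence, via its degreewise-split brutal truncation, collapses to an honest distinguished triangle in the homotopy category rather than merely in a Verdier quotient. Everything else (exactness of $E^{(n)}_{\bullet}$, commutation of $\op{tot}^{\bigoplus}$ with filtered colimits, injectivity of $1-\sigma$) is routine.
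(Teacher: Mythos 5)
Your proof is correct and follows essentially the same route as the paper's: truncate the exact complex into bounded exact pieces $\tau_{\leq n}E_{\bullet}$, present the infinite totalization as the cokernel of the telescope map $1-\sigma$ on $\bigoplus_n \op{tot}(\tau_{\leq n}E_{\bullet})$, and conclude co-acyclicity from the resulting short exact sequence using closure of $\op{Co\text{-}acycl}(w)$ under coproducts and cones. The only difference is cosmetic --- the paper forms the telescope at the level of complexes of factorizations and then applies the exact functor $\op{tot}^{\bigoplus}$, whereas you totalize first and telescope the factorizations $X_n$ directly, spelling out the final triangle argument in more detail.
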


\begin{proof}
 Assume that $\mathcal A$ possesses small coproducts and that $E_{\bullet}$ bounded below. Note that shifting the $E_{\bullet}$ and applying any of the totalizations yields a shift of totalization. So we may assume that $E_s = 0$ for $s < 0$. Let $C_s$ be the kernel of $g_{s}$ so that we have a bounded exact sequence
 \begin{displaymath}
  0 \to E_0 \to E_1 \to \cdots \to E_{s-1} \to E_s \to C_s \to 0. 
 \end{displaymath}
 Denote this complex by $\tau_{\leq s} E_{\bullet}$. Note that there is a natural chain map $h_{s+1}: \tau_{\leq s} E_{\bullet} \to \tau_{\leq s+1} E_{\bullet}$ 
  \begin{center} 
  \begin{tikzpicture}[description/.style={fill=white,inner sep=2pt}]
  \matrix (m) [matrix of math nodes, row sep=3em, column sep=3em, text height=1.5ex, text depth=0.25ex]
  { 0 & E_0 & \cdots & E_{s-1} & E_s & C_{s} & 0 &  \\
    0 & E_0 & \cdots & E_{s-1} & E_s & E_{s+1} & C_{s+1} & 0 \\
  };
  \path[->,font=\scriptsize]
  (m-1-1) edge (m-1-2) 
  (m-1-2) edge (m-1-3)
  (m-1-3) edge (m-1-4)
  (m-1-4) edge (m-1-5)
  (m-1-5) edge (m-1-6) 
  (m-1-6) edge (m-1-7) 
  
  (m-1-2) edge (m-2-2)
  (m-1-4) edge (m-2-4)
  (m-1-5) edge (m-2-5)
  (m-1-6) edge (m-2-6) 
  (m-1-7) edge (m-2-7)
  
  (m-2-1) edge (m-2-2) 
  (m-2-2) edge (m-2-3)
  (m-2-3) edge (m-2-4)
  (m-2-4) edge (m-2-5)
  (m-2-5) edge (m-2-6) 
  (m-2-6) edge (m-2-7)
  (m-2-7) edge (m-2-8)
  ;
  \end{tikzpicture} 
 \end{center}
 One can check that $E_{\bullet}$ is isomorphic to the cokernel of the monomorphism 
 \begin{displaymath}
  \bigoplus_{s \geq 1} \tau_{\leq s} E_{\bullet} \to \bigoplus_{s\geq 1} \tau_{\leq s} E_{\bullet}
 \end{displaymath}
 determined by 
 \begin{displaymath}
  \tau_{\leq s} E_{\bullet} \overset{\op{id}_{\tau_{\leq s} E_{\bullet}} \oplus -h_{s+1}}{\to} \tau_{\leq s} E_{\bullet} \oplus \tau_{\leq s+1} E_{\bullet}.
 \end{displaymath}
 Applying $\op{tot}^{\bigoplus}$ yields an exact sequence of factorizations
 \begin{displaymath}
  0 \to \bigoplus_{s \geq 1} \op{tot}(\tau_{\leq s} E_{\bullet}) \to \bigoplus_{s \geq 1} \op{tot}(\tau_{\leq s} E_{\bullet}) \to \op{tot}^{\bigoplus}(E_{\bullet}) \to 0
 \end{displaymath}
 showing that $\op{tot}^{\bigoplus}(E_{\bullet})$ is co-acyclic. The proof of the other statement is completely analogous and therefore suppressed. 
\end{proof}

\begin{lemma} \label{lemma: Dfact is triang}
 The categories, $\dabsFact{\mathcal E, w}, \dcoFact{\mathcal E, w}, \dconFact{\mathcal E, w}$, with the shift and triangles inherited from $\op{K}(\op{Fact} \mathcal E, w)$, are triangulated categories.
\end{lemma}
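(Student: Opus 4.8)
The plan is to deduce all three assertions at once from Verdier's localization theorem, applied to the triangulated category furnished by Proposition~\ref{prop: Kfact is a triangulated category}. Recall that theorem: for a triangulated category $\mathcal T$ with suspension $[1]$ and a thick subcategory $\mathcal S \subseteq \mathcal T$ (i.e.\ a full triangulated subcategory closed under direct summands), the Verdier quotient $\mathcal T / \mathcal S$ carries a unique triangulated structure making the canonical functor $Q\colon \mathcal T \to \mathcal T/\mathcal S$ exact; the suspension on $\mathcal T/\mathcal S$ is induced from $[1]$, and the distinguished triangles of $\mathcal T/\mathcal S$ are exactly those isomorphic to the image under $Q$ of a distinguished triangle of $\mathcal T$ (see, e.g., \cite{GM}). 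So the first step is simply to invoke this with $\mathcal T = \op{K}(\op{Fact}\mathcal E, w)$, which is triangulated with shift $[1]$ and standard triangles $E \overset{g}{\to} F \to C(g) \to E[1]$ by the cited proposition.

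The second step is to check that $\op{Acycl}(\mathcal E, w)$, $\op{Co-acycl}(\mathcal E, w)$, and $\op{Ctr-acycl}(\mathcal E, w)$ are thick subcategories of $\op{K}(\op{Fact}\mathcal E, w)$; but this is immediate, as each of them was \emph{defined} to be the smallest thick subcategory of $\op{K}(\op{Fact}\mathcal E, w)$ with the relevant closure properties (containing the totalizations of bounded exact complexes, and in the co- and contra- cases also closed under small coproducts, resp.\ products). In particular each is, by construction, a full triangulated subcategory closed under direct summands. Running Verdier's theorem three times, with $\mathcal S$ taken to be these three subcategories in turn, then produces the triangulated structures on $\dabsFact{\mathcal E, w}$, $\dcoFact{\mathcal E, w}$, and $\dconFact{\mathcal E, w}$, with the shift and triangles inherited from $\op{K}(\op{Fact}\mathcal E, w)$ exactly as claimed.

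The only point I would treat with real care — and essentially the only possible obstacle — is the set-theoretic one: a Verdier quotient need not have small $\op{Hom}$-classes a priori. This is not a problem in the situations we care about: when $\mathcal E$ is essentially small the quotient is plainly locally small, and in the remaining (unbounded, ``large'') cases local smallness will follow from the resolution theorems of the next sections, which identify the quotients with subcategories of $\op{K}(\op{Fact}\mathcal E, w)$ on which $Q$ restricts to a fully faithful functor. Since the verification of the triangulated axioms for the quotient is in any case purely formal and independent of this issue, I would relegate the smallness discussion to a remark rather than let it interrupt the proof.
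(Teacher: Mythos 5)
Your proof is correct and follows the same route as the paper, which likewise observes that each category is a Verdier quotient of the triangulated category $\op{K}(\op{Fact}\mathcal E, w)$ by a thick subcategory (thick by definition) and cites Verdier's localization theorem. The extra remark on local smallness is a reasonable addition but not needed to match the paper's argument.
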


\begin{proof}
 Each of these categories is a Verdier quotient of a triangulated category by a thick triangulated subcategory hence triangulated \cite[\S 3]{Ve}.
\end{proof}

Next we demonstrate that, under familiar conditions, many of these categories coincide.

\begin{proposition} \label{proposition: adapted co-replacement}
 Assume that $\mathcal A$ has small coproducts. Let $\mathcal E$ be an additive full subcategory of $\mathcal A$ preserved by $\Phi$ and satisfying the following conditions:
 \begin{itemize}
  \item $\mathcal E$ is closed under coproducts. 
  \item For any object $A \in \mathcal A$, there exists a monomorphism
  \begin{displaymath}
   A \to E
  \end{displaymath}
  with $E$ an object of $\mathcal E$.
 \end{itemize}
 Then, the composition
 \begin{displaymath}
  \op{K}(\op{Fact} \mathcal E,w) \to \op{K}(\op{Fact} w) \to \op{D}^{\op{co}}(\op{Fact} w)
 \end{displaymath}
 induces an equivalence
 \begin{displaymath}
  \op{Q}_{\mathcal E} : \op{D}^{\op{co}}(\op{Fact} \mathcal E,w) \to \op{D}^{\op{co}}(\op{Fact} w).
 \end{displaymath}
\end{proposition}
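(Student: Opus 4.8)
The plan is to realize $\op{Q}_{\mathcal E}$ as an equivalence by constructing, for each factorization, a ``right $\mathcal E$-resolution'' and showing it provides a quasi-inverse. First I would build the resolution. Given $E\in\op{Fact}(w)$, choose monomorphisms $E^{-1}\hookrightarrow J^{-1}$ and $E^{0}\hookrightarrow J^{0}$ with $J^{-1},J^{0}\in\mathcal E$; using $\phi^{-1}_E$ and $\phi^{0}_E$ these assemble into a monomorphism of factorizations $E\hookrightarrow\mathbf I^{0}(E)$, where $\mathbf I^{0}(E)$ is the direct sum of the two contractible factorizations generated by $J^{0}$ in degree $0$ and by $J^{-1}$ in degree $-1$, so that $\mathbf I^{0}(E)\in\op{Fact}(\mathcal E,w)$. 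Iterating on the componentwise cokernel yields a bounded-below exact complex of factorizations $0\to E\to\mathbf I^{0}(E)\to\mathbf I^{1}(E)\to\cdots$ with every $\mathbf I^{j}(E)\in\op{Fact}(\mathcal E,w)$. Set $\mathfrak R(E):=\op{tot}^{\bigoplus}(\mathbf I^{\bullet}(E))$; since $\mathcal E$ is closed under coproducts and under $\Phi^{\pm 1}$ we get $\mathfrak R(E)\in\op{Fact}(\mathcal E,w)$, and Lemma~\ref{lemma: infinite tots are acyclic} applied to the bounded-below exact complex $[E\to\mathbf I^{\bullet}(E)]$ shows the canonical morphism $\eta_E\colon E\to\mathfrak R(E)$ has co-acyclic cone $C(\eta_E)$ (a shift of $\op{tot}^{\bigoplus}[E\to\mathbf I^{\bullet}(E)]$), hence is a co-quasi-isomorphism. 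With some bookkeeping one arranges $\mathfrak R$ to be a triangulated, coproduct-preserving functor $\op{K}(\op{Fact}w)\to\op{K}(\op{Fact}\mathcal E,w)$ and $\eta$ a natural transformation $\op{id}\Rightarrow\iota\mathfrak R$, $\iota$ the inclusion.

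Essential surjectivity of $\op{Q}_{\mathcal E}$ is then immediate: $\eta_E$ exhibits $E\cong\mathfrak R(E)$ in $\dcoFact{w}$ with $\mathfrak R(E)\in\op{Fact}(\mathcal E,w)$. For fully-faithfulness I would reduce to the identification
\begin{equation}\label{eq:coac}
\op{Co-acycl}(\mathcal E,w)=\op{Co-acycl}(w)\cap\op{K}(\op{Fact}\mathcal E,w).
\end{equation}
Granting \eqref{eq:coac}, $\op{Q}_{\mathcal E}$ factors as the tautological quotient $\dcoFact{\mathcal E,w}\to\op{K}(\op{Fact}\mathcal E,w)/\bigl(\op{Co-acycl}(w)\cap\op{K}(\op{Fact}\mathcal E,w)\bigr)$ — an equivalence by \eqref{eq:coac} — followed by a functor that is an equivalence by the general principle that whenever every object of a triangulated category $\mathsf K$ sits in a triangle $X\to L\to S\to X[1]$ with $L$ in a full triangulated subcategory $\mathsf L$ and $S$ in a thick subcategory $\mathsf S$, the induced functor $\mathsf L/(\mathsf L\cap\mathsf S)\to\mathsf K/\mathsf S$ is an equivalence; one proves the latter directly via the octahedral axiom and the calculus of fractions, the required triangles being supplied by $E\xrightarrow{\eta_E}\mathfrak R(E)\to C(\eta_E)\to E[1]$ (with $\mathsf S=\op{Co-acycl}(w)$, $\mathsf L=\op{K}(\op{Fact}\mathcal E,w)$).

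It remains to prove \eqref{eq:coac}. The inclusion $\subseteq$ is clear from the definitions. For $\supseteq$, I would first show $\mathfrak R\bigl(\op{Co-acycl}(w)\bigr)\subseteq\op{Co-acycl}(\mathcal E,w)$: as $\mathfrak R$ is triangulated and coproduct-preserving it suffices to treat a totalization $\op{tot}(E_{\bullet})$ of a bounded exact complex $E_{\bullet}$ of arbitrary factorizations, and a Cartan--Eilenberg $\mathcal E$-resolution of $E_{\bullet}$ — which has exact rows because $E_{\bullet}$ is exact — identifies $\mathfrak R(\op{tot}(E_{\bullet}))$, up to a co-acyclic factorization in $\op{Fact}(\mathcal E,w)$ and a homotopy equivalence, with the $\bigoplus$-totalization of a bounded-below complex whose terms are totalizations of bounded exact complexes of $\mathcal E$-factorizations, hence with an object of $\op{Co-acycl}(\mathcal E,w)$. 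Next, when $E\in\op{Fact}(\mathcal E,w)$ the complex $[E\to\mathbf I^{\bullet}(E)]$ lies entirely in $\op{Fact}(\mathcal E,w)$, so the $\mathcal E$-version of Lemma~\ref{lemma: infinite tots are acyclic} gives $C(\eta_E)\in\op{Co-acycl}(\mathcal E,w)$. Finally, for $P\in\op{Co-acycl}(w)\cap\op{K}(\op{Fact}\mathcal E,w)$, in the triangle $P\xrightarrow{\eta_P}\mathfrak R(P)\to C(\eta_P)\to P[1]$ both $\mathfrak R(P)$ (because $P$ is co-acyclic) and $C(\eta_P)$ lie in $\op{Co-acycl}(\mathcal E,w)$, forcing $P\in\op{Co-acycl}(\mathcal E,w)$.

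The main obstacle is the very first step: producing a resolution of a factorization out of resolutions of its components — the ``strictification'' phenomenon — and controlling its totalization. Tightly bound up with this is the Cartan--Eilenberg verification inside \eqref{eq:coac}, namely that $\mathfrak R$ sends $\op{Co-acycl}(w)$ into $\op{Co-acycl}(\mathcal E,w)$ and not merely into the a priori larger class of co-acyclic $\mathcal E$-factorizations; this forces one to compare the constructed $\mathcal E$-resolutions carefully (there is no lifting property for general $\mathcal E$, so uniqueness up to homotopy is not automatic). Arranging $\mathfrak R$ to be an honest coproduct-preserving functor is a secondary technical point, which can alternatively be circumvented by running the entire argument through the calculus of fractions at the cost of heavier bookkeeping.
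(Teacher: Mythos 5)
Your two core constructions are exactly the paper's: the embedding $E\hookrightarrow\mathbf I^{0}(E)$ is the paper's contractible factorization $G^-(E)$, iterated and $\bigoplus$-totalized to produce the $\mathcal E$-replacement, and the Cartan--Eilenberg double resolution is precisely how the paper handles totalizations of short exact sequences. Where you diverge is the formal packaging: you route fully-faithfulness through the identity $\op{Co-acycl}(\mathcal E,w)=\op{Co-acycl}(w)\cap\op{K}(\op{Fact}\,\mathcal E,w)$, proved by pushing $\op{Co-acycl}(w)$ forward along a resolution \emph{functor} $\mathfrak R$. The paper never asserts this identity; it only shows, object by object, that the totalization of a short exact sequence of arbitrary factorizations is isomorphic \emph{in} $\op{D}^{\op{co}}(\op{Fact}\,w)$ to an object of $\op{Co-acycl}(\mathcal E,w)$, and feeds that into the standard Verdier-quotient comparison.

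The gap is that the functoriality of $\mathfrak R$ is not a ``secondary technical point'' in your argument --- it is load-bearing and, for a general $\mathcal E$ satisfying only the stated hypotheses, unavailable. Your proof that $\mathfrak R(\op{Co-acycl}(w))\subseteq\op{Co-acycl}(\mathcal E,w)$ reduces to generators by observing that $\mathfrak R^{-1}(\op{Co-acycl}(\mathcal E,w))$ is a thick, coproduct-closed subcategory; this requires $\mathfrak R$ to be a triangulated, coproduct-preserving functor on $\op{K}(\op{Fact}\,w)$. But the construction depends on arbitrary choices of monomorphisms $E^i\hookrightarrow J^i$, and since objects of $\mathcal E$ have no lifting property, a morphism $E\to F$ of factorizations does not extend --- even up to homotopy --- to a morphism of the chosen resolutions. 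So $\mathfrak R$ is not a functor, the reduction to generators fails, and with it your proof of \eqref{eq:coac} (whose validity under the bare hypotheses is itself unclear; you are proving something strictly stronger than the proposition needs). A second point you should not wave through: the ``$\mathcal E$-version of Lemma~\ref{lemma: infinite tots are acyclic}'' is not automatic, because that lemma's proof uses the good truncations $\tau_{\leq s}E_{\bullet}$, whose end terms are syzygies $C_s$ that need not lie in $\mathcal E$; hence the short exact sequences produced there do not live in $\op{Fact}(\mathcal E,w)$ and do not by themselves certify membership in $\op{Co-acycl}(\mathcal E,w)$. The repair is to abandon the functorial framing and argue as the paper does: fix the objects in question, build the explicit Cartan--Eilenberg resolution whose rows and columns you control, and conclude only the co-quasi-isomorphism statement needed for the localization argument.
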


\begin{proof}
 We first check that any factorization is quasi-isomorphic to a factorization whose components lie in $\mathcal E$. The argument is contained in the proof of \cite[Theorem 3.6]{Pos1}. Let $F$ be a factorization of $w$. By assumption, we may choose objects of $\mathcal E$, $E^{-1}$ and $E^0$, and monomorphisms
 \begin{align*}
  F^{-1} & \overset{f^{-1}}{\to} E^{-1} \\
  F^0 & \overset{f^0}{\to} E^0.
 \end{align*}
 Form the factorization, $G^-(E)$,
 \begin{displaymath}
  \Phi^{-1}(E^0) \oplus E^{-1} \overset{\begin{pmatrix} 0 & 1_{E^{-1}} \\ w_{\Phi^{-1}(E^0)} & 0 \end{pmatrix}}{\to} E^{-1} \oplus E^0 \overset{\begin{pmatrix} 0 & w_{E^{-1}} \\ 1_{E^0} & 0 \end{pmatrix}}{\to} E^0 \oplus \Phi(E^{-1}).
 \end{displaymath}
 The maps
 \begin{align*}
  F^{-1} & \overset{ f^{-1} \oplus f^0 \circ \phi_F^{-1} }{\to} E^{-1} \oplus E^0 \\
  F^0 & \overset{ f^0 \oplus \Phi(f^{-1}) \circ \Phi(\phi_F^0) }{\to} E^0 \oplus \Phi(E^{-1}),
 \end{align*}
 give a monomorphism $F \to G^-(E)$ in $\op{Fact}(w)$. Thus, for any factorization $F$, there exists a factorization with $\mathcal E$-components which received a monomorphism from $F$. We can construct an exact complex of objects of $\op{Fact}(w)$
 \begin{displaymath}
  0 \to F \to E_0 \to E_1 \to \cdots \to E_s \to \cdots
 \end{displaymath}
 where each $E_j$ is a factorization with $\mathcal E$-components. Taking a totalization, we get a monomorphism
 \begin{displaymath}
  F \to \op{tot}^{\bigoplus}(E_{\bullet}). 
 \end{displaymath}
 By assumption, the factorization, $\op{tot}^{\bigoplus}(E_{\bullet})$, has components lying in $\mathcal E$.
 
 Thus, the natural functor,
 \begin{displaymath}
  \op{D}^{\op{co}}(\op{Fact} \mathcal E,w) \to \op{D}^{\op{co}}(\op{Fact} w)
 \end{displaymath}
 is essentially surjective. We next check fully-faithfulness. 
 
 For fully-faithfulness, since any bounded exact complex can be split into short exact sequences, it suffices to show that given a short exact sequence
 \begin{equation} \label{equation: fact SES}
  0 \to F_0 \to F_1 \to F_2 \to 0
 \end{equation}
 of factorizations in $\op{Fact}(w)$, there exists a factorization, $S \in \op{Acyc}(\mathcal E,w)$, that is isomorphic to the totalization, $T$, of \eqref{equation: fact SES} in $\op{D}^{\op{co}}(\op{Fact} w)$. 
 
 Using what we have already proven, we can find a factorization $E_{0,0}$ with components in $\mathcal E$ and a monomorphism 
 \begin{displaymath}
  F_0 \to E_{0,0}.
 \end{displaymath}
 Next choose a factorization $E_{1,0}$ with components in $\mathcal E$ and a monomorphism from the pushout
 \begin{displaymath}
  F_1 \oplus_{F_0} E_{0,0} \to E_{1,0}.
 \end{displaymath}
 Let $E_{2,0}$ be a factorization with components in $\mathcal E$ admitting a monomorphism from the cokernel of the map $F_1 \oplus_{F_0} E_{0,0} \to E_{1,0}$. And inductively for $n \geq 3$, let $E_{n,0}$ be a factorization with components in $\mathcal E$ admitting a monomorphism from cokernel of the map $E_{n-2,0} \to E_{n-1,0}$.
 There is a commutative diagram 
 \begin{center}
 \begin{tikzpicture}[description/.style={fill=white,inner sep=2pt}]
  \matrix (m) [matrix of math nodes, row sep=1em, column sep=3em, text height=1.5ex, text depth=0.25ex]
  {  0 & F_0 & F_1 & F_2 & 0 & \cdots \\
     0 & E_{0,0} & E_{1,0} & E_{2,0} & E_{3,0} & \cdots \\
  };
  \path[->,font=\scriptsize]
  (m-1-1) edge (m-1-2)
  (m-1-2) edge (m-1-3)
  (m-1-3) edge (m-1-4)
  (m-1-4) edge (m-1-5)
  (m-1-5) edge (m-1-6)
  (m-2-1) edge (m-2-2)
  (m-2-2) edge (m-2-3)
  (m-2-3) edge (m-2-4)
  (m-2-4) edge (m-2-5)
  (m-2-5) edge (m-2-6)
  (m-1-2) edge (m-2-2)
  (m-1-3) edge (m-2-3)
  (m-1-4) edge (m-2-4)
  (m-1-5) edge (m-2-5)
  ;
 \end{tikzpicture}
 \end{center}
 with the vertical morphisms being monomomorphism and the rows being exact. Set $E_{n,-1} = 0$. For $m \geq 1, n \geq 0$, let $E_{m,n}$ be a factorization with $\mathcal E$ components receiving a monomorphism from the pushout $E_{m-1,n} \oplus_{E_{m-1,n-1}} E_{m,n-1}$. We get an exact sequence of exact sequences
 \begin{center}
 \begin{tikzpicture}[description/.style={fill=white,inner sep=2pt}]
  \matrix (m) [matrix of math nodes, row sep=1em, column sep=3em, text height=1.5ex, text depth=0.25ex]
  {  & 0 & 0 & 0 &  &  \\
     0 & F_0 & F_1 & F_2 & 0 &  \\
     0 & E_{0,0} & E_{1,0} & E_{2,0} & E_{3,0} & \cdots \\
     & \vdots & \vdots & \vdots & \vdots &  \\
     0 & E_{0,s} & E_{1,s} & E_{2,s} & E_{3,s} & \cdots \\
     & \vdots & \vdots & \vdots & \vdots &  \\
  };
  \path[->,font=\scriptsize]
  (m-2-1) edge (m-2-2)
  (m-2-2) edge (m-2-3)
  (m-2-3) edge (m-2-4)
  (m-2-4) edge (m-2-5)
  
  (m-3-1) edge (m-3-2)
  (m-3-2) edge (m-3-3)
  (m-3-3) edge (m-3-4)
  (m-3-4) edge (m-3-5)
  (m-3-5) edge (m-3-6)
  
  (m-5-1) edge (m-5-2)
  (m-5-2) edge (m-5-3)
  (m-5-3) edge (m-5-4)
  (m-5-4) edge (m-5-5)
  (m-5-5) edge (m-5-6)
  
  (m-1-2) edge (m-2-2)
  (m-1-3) edge (m-2-3)
  (m-1-4) edge (m-2-4)
  
  (m-2-2) edge (m-3-2)
  (m-2-3) edge (m-3-3)
  (m-2-4) edge (m-3-4)
  (m-2-5) edge (m-3-5)
  
  (m-3-2) edge (m-4-2)
  (m-3-3) edge (m-4-3)
  (m-3-4) edge (m-4-4)
  (m-3-5) edge (m-4-5)
  
  (m-4-2) edge (m-5-2)
  (m-4-3) edge (m-5-3)
  (m-4-4) edge (m-5-4)
  (m-4-5) edge (m-5-5)
  
  (m-5-2) edge (m-6-2)
  (m-5-3) edge (m-6-3)
  (m-5-4) edge (m-6-4)
  (m-5-5) edge (m-6-5)
  ;
 \end{tikzpicture}
 \end{center}
 where each $E_{i,j}$ has components in $\mathcal E$. We take totalizations to get an exact sequence
 \begin{displaymath}
  0 \to \op{tot}(F_{\bullet}) \to \op{tot}^{\bigoplus}(E_{\bullet,0}) \to \cdots \to \op{tot}^{\bigoplus}(E_{\bullet,s}) \to \cdots.
 \end{displaymath}
 By Lemma~\ref{lemma: infinite tots are acyclic}, each $\op{tot}^{\bigoplus}(E_{\bullet,s})$ is co-acyclic. Thus, the totalization $\op{tot}^{\bigoplus}(\op{tot}^{\bigoplus}(E_{\bullet,\bullet}))$ of 
 \begin{displaymath}
  \op{tot}^{\bigoplus}(E_{\bullet,0}) \to \cdots \to \op{tot}^{\bigoplus}(E_{\bullet,s}) \to \cdots
 \end{displaymath}
 lies in $\op{Co-acyc}(\mathcal E,w)$ and is isomorphic to $\op{tot}(F_{\bullet})$ in $\op{D}^{\op{co}}(\op{Fact} w)$.
\end{proof}

There is also the dual statement which we record separately.

\begin{proposition} \label{proposition: adapted contra-replacement}
 Assume that $\mathcal A$ has small products. Let $\mathcal E$ be an additive full subcategory of $\mathcal A$ preserved by $\Phi$ and satisfying the following conditions:
 \begin{itemize}
  \item $\mathcal E$ is closed under products. 
  \item For any object $A \in \mathcal A$, there exists a epimorphism
  \begin{displaymath}
   E \to A
  \end{displaymath}
  with $E$ an object of $\mathcal E$.
 \end{itemize}
 Then, the composition
 \begin{displaymath}
  \op{K}(\op{Fact} \mathcal E,w) \to \op{K}(\op{Fact}(\mathcal A,w)) \to \op{D}^{\op{ctr}}(\op{Fact} w)
 \end{displaymath}
 induces an equivalence
 \begin{displaymath}
  \op{Q}_{\mathcal E} : \op{D}^{\op{ctr}}(\op{Fact} \mathcal E,w) \to \op{D}^{\op{ctr}}(\op{Fact} w).
 \end{displaymath}
\end{proposition}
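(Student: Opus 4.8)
The plan is to run the proof of Proposition~\ref{proposition: adapted co-replacement} verbatim in its dual form, systematically replacing: small coproducts by small products, closure under coproducts by closure under products, monomorphisms by epimorphisms, $\op{tot}^{\bigoplus}$ by $\op{tot}^{\prod}$, ``bounded below'' by ``bounded above'', co-acyclic by contra-acyclic, and $\op{Co-acycl}(\mathcal E,w)$ by $\op{Ctr-acycl}(\mathcal E,w)$. (Alternatively, one can observe that $\op{Fact}(\mathcal A,\Phi,w)^{\op{op}}$ is itself a factorization category --- built from $\mathcal A^{\op{op}}$, the inverse autoequivalence $(\Phi^{\op{op}})^{-1}$, and the natural transformation induced by $\Phi^{-1}(w)$ --- under which $\op{tot}^{\prod}$, products, epimorphisms and contra-acyclicity correspond to $\op{tot}^{\bigoplus}$, coproducts, monomorphisms and co-acyclicity, so that Proposition~\ref{proposition: adapted co-replacement} applied to these opposite data yields the statement directly; but it is cleaner to give the dual argument by hand, since the needed duality of factorization categories is not set up in this excerpt.)

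The only genuinely new ingredient is the dual of the ``cofree'' factorization $G^-(E)$ used in the proof of Proposition~\ref{proposition: adapted co-replacement}. Given a factorization $F$ and epimorphisms $P^{-1}\to F^{-1}$, $P^0\to F^0$ with $P^{-1},P^0\in\mathcal E$ (which exist by the second hypothesis), I would form the ``free'' factorization $G^+(P)$ with components $P^{-1}\oplus\Phi^{-1}(P^0)$ and $P^0\oplus P^{-1}$ and structure maps assembled from identities and the components of $w$, mirroring the formula for $G^-(E)$ with the roles of $\Phi$ and $\Phi^{-1}$ interchanged; the given epimorphisms together with $\phi_F$ then assemble into an epimorphism $G^+(P)\to F$ whose source has $\mathcal E$-components. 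Iterating produces an exact complex $\cdots\to E_1\to E_0\to F\to 0$ of factorizations with each $E_j$ having $\mathcal E$-components. Since the complex $\cdots\to E_1\to \ker(E_0\to F)\to 0$ is bounded above and exact, Lemma~\ref{lemma: infinite tots are acyclic} shows the kernel of the induced epimorphism $\op{tot}^{\prod}(E_\bullet)\to F$ is contra-acyclic, so $F$ is contra-quasi-isomorphic to $\op{tot}^{\prod}(E_\bullet)$, whose components lie in $\mathcal E$ because $\mathcal E$ is closed under products. This gives essential surjectivity of $\op{Q}_{\mathcal E}$.

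For fully-faithfulness I would reduce, exactly as in the proof of Proposition~\ref{proposition: adapted co-replacement} (using that $\op{Ctr-acycl}(w)$ is generated, as a thick subcategory closed under products, by totalizations of short exact sequences of factorizations), to the statement that for every short exact sequence $0\to F_0\to F_1\to F_2\to 0$ in $\op{Fact}(w)$ the totalization $\op{tot}(F_\bullet)$ is isomorphic in $\op{D}^{\op{ctr}}(\op{Fact} w)$ to an object of $\op{Ctr-acycl}(\mathcal E,w)$. To prove this, I would build, using the $G^+$-construction in place of $G^-$, a ``Cartan--Eilenberg''-type double complex $E_{\bullet,\bullet}$ of $\mathcal E$-factorizations whose columns $\cdots\to E_{\bullet,1}\to E_{\bullet,0}$ are bounded-above left resolutions and whose rows assemble into an exact sequence of exact sequences resolving $0\to F_0\to F_1\to F_2\to 0$. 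Totalizing each row with $\op{tot}^{\prod}$ yields, by Lemma~\ref{lemma: infinite tots are acyclic}, contra-acyclic $\mathcal E$-factorizations; totalizing the resulting complex once more with $\op{tot}^{\prod}$ produces an object of $\op{Ctr-acycl}(\mathcal E,w)$ that is contra-quasi-isomorphic to $\op{tot}(F_\bullet)$, as required.

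The main obstacle is entirely one of bookkeeping: verifying that the dual ``free'' factorization $G^+(P)$ really satisfies the factorization identities and admits the claimed epimorphism onto $F$ (here the asymmetry between $\Phi$ and $\Phi^{-1}$ in the definition of a factorization, and in $G^-$, is what one must take care with), and keeping track of signs and of the commutation of $\op{tot}^{\prod}$ with the pushouts and cokernels used to assemble the double complex. No homological input beyond Lemma~\ref{lemma: infinite tots are acyclic} and the formal exactness properties of the totalization functors is needed, and every step has an exact counterpart in the proof of Proposition~\ref{proposition: adapted co-replacement}.
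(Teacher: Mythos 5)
Your proposal is correct and is exactly what the paper intends: its own proof of this proposition is the one-line remark that the argument is completely dual to that of Proposition~\ref{proposition: adapted co-replacement}, and you have carried out that dualization faithfully (including the dual ``free'' factorization $G^+(P)$ replacing $G^-(E)$, the left Cartan--Eilenberg resolution, and the appeal to Lemma~\ref{lemma: infinite tots are acyclic} for bounded-above exact complexes).
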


\begin{proof}
 This proof is completely analogous to that of Proposition~\ref{proposition: adapted co-replacement} and is therefore suppressed.
\end{proof}

Finally, modifying the assumptions slightly, we have an analogous statement for absolute derived categories.

\begin{proposition} \label{proposition: adapted abs-replacement}
 Let $\mathcal E$ be an additive full subcategory of $\mathcal A$ preserved by $\Phi$ and satisfying the following conditions: 
 \begin{itemize}
  \item For any object $A \in \mathcal A$, there exists a monomorphism
  \begin{displaymath}
   A \to E
  \end{displaymath}
  with $E$ an object of $\mathcal E$.
  \item There exists an $N$ such that for any exact sequence
  \begin{displaymath}
    0 \to A \to E_0 \to \cdots \to E_{n-1} \to E_{n},
  \end{displaymath}
  with each $E_i$ lying in $\mathcal E$, the cokernel of the morphism $E_{n-1} \to E_{n}$ lies in $\mathcal E$ whenever $|n| \geq N$.
 \end{itemize}
 Or satisfying the following dual conditions:
 \begin{itemize}
  \item For any object $A \in \mathcal A$, there exists a epimorphism
  \begin{displaymath}
   E \to A
  \end{displaymath}
  with $E$ an object of $\mathcal E$.
  \item There exists an $N$ such that for any exact sequence
  \begin{displaymath}
    E_{n} \to E_{n+1} \to \cdots \to E_0 \to A \to 0,
  \end{displaymath}
  with each $E_i$ lying in $\mathcal E$, the kernel of the morphism $E_{n} \to E_{n+1}$ lies in $\mathcal E$ whenever $n \geq N$.
 \end{itemize}
 
 Under either set of assumptions, the composition
 \begin{displaymath}
  \op{K}(\op{Fact} \mathcal E,w) \to \op{K}(\op{Fact}(\mathcal A,w)) \to \op{D}^{\op{abs}}(\op{Fact} w)
 \end{displaymath}
 induces an equivalence
 \begin{displaymath}
  \op{Q}_{\mathcal E} : \op{D}^{\op{abs}}(\op{Fact} \mathcal E,w) \to \op{D}^{\op{abs}}(\op{Fact} w).
 \end{displaymath}
\end{proposition}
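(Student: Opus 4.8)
The plan is to mimic the proof of Proposition~\ref{proposition: adapted co-replacement}: show first that $\op{Q}_{\mathcal E}$ is essentially surjective, then that it is full and faithful. The single essential difference is that $\op{Acycl}(w)$ is generated as a thick subcategory only by totalizations of \emph{bounded} exact complexes, so --- unlike in Proposition~\ref{proposition: adapted co-replacement} --- we may not appeal to Lemma~\ref{lemma: infinite tots are acyclic}, and every $\mathcal E$-resolution we build must be truncated to finite length. The second bullet in each hypothesis list is exactly what permits this truncation. We argue under the first list; the second is handled by the evident dualization (epimorphisms, left resolutions, pullbacks and the kernel-stabilization clause replacing monomorphisms, right resolutions, pushouts and the cokernel-stabilization clause), which is why both are recorded.

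For essential surjectivity, let $F \in \op{Fact}(w)$. Using the construction $G^-(-)$ from the proof of Proposition~\ref{proposition: adapted co-replacement}, embed $F$ into a factorization with $\mathcal E$-components and iterate to get an exact complex $0 \to F \to E^0 \to E^1 \to \cdots$ with each $E^j \in \op{Fact}(\mathcal E, w)$. Since kernels and cokernels in $\op{Fact}(w)$ are computed componentwise (Lemma~\ref{lemma: fact is an abelian category}), each of the two component complexes of this complex is an exact complex in $\mathcal A$ with terms in $\mathcal E$; applying the second hypothesis to each component, the factorization $Z := \op{coker}(E^{N-1} \to E^N)$ lies in $\op{Fact}(\mathcal E, w)$ once $N$ is large enough. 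This gives a \emph{bounded} exact complex $0 \to F \to E^0 \to \cdots \to E^{N-1} \to Z \to 0$ of factorizations with $\mathcal E$-components. Splitting $F$ off by the stupid truncation and applying the exact functor $\op{tot}$ produces a distinguished triangle in $\op{K}(\op{Fact} w)$ whose third term is the totalization of this bounded exact complex, hence lies in $\op{Acycl}(w)$; therefore $F$ is isomorphic in $\dabsFact{w}$ to $\op{tot}(E^0 \to \cdots \to E^{N-1} \to Z)$, which has $\mathcal E$-components (here one uses that $\mathcal E$ is additive and preserved by $\Phi$, so totalizations of bounded complexes of $\mathcal E$-factorizations again have $\mathcal E$-components). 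Thus $\op{Q}_{\mathcal E}$ is essentially surjective.

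For full-faithfulness, by the same reduction as in the proof of Proposition~\ref{proposition: adapted co-replacement} it suffices to show that for every short exact sequence $0 \to F_0 \to F_1 \to F_2 \to 0$ in $\op{Fact}(w)$ the totalization $T := \op{tot}(F_\bullet)$ becomes, in $\dabsFact{w}$, isomorphic to an object of $\op{Acycl}(\mathcal E, w)$. We reproduce the ``exact sequence of exact sequences'' of that proof --- a double complex of factorizations with $\mathcal E$-components, with exact rows and exact columns, resolving $F_\bullet$ --- and observe in addition that, because of the second hypothesis, it may be taken bounded in both directions. Horizontally, each row is constructed as an $\mathcal E$-resolution of a bounded exact complex, and since its terms lie in $\mathcal E$ it may, after finitely many steps, be terminated by replacing the last term with a cokernel of earlier $\mathcal E$-terms, which the hypothesis guarantees again lies in $\op{Fact}(\mathcal E, w)$. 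Vertically, the columns are exact $\mathcal E$-coresolutions of the factorizations $F_m$ --- componentwise, of the objects $F_m^{-1}$ and $F_m^0$ of $\mathcal A$ --- so by the hypothesis applied componentwise the cokernel complex $\op{coker}(E_{\bullet,n-1} \to E_{\bullet,n})$ has all components in $\mathcal E$ once $n$ is large; that complex may then be taken as the next row, the subsequent row is forced to vanish, and the construction stops. The resulting double complex is bounded in both directions.

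Finally, totalize. Each row totalization $\op{tot}(E_{\bullet,n})$ is the totalization of a bounded exact complex of factorizations with $\mathcal E$-components, hence lies in $\op{Acycl}(\mathcal E, w)$; and applying the exact functor $\op{tot}$ to the \emph{finite} exact complex $0 \to T \to \op{tot}(E_{\bullet,0}) \to \cdots \to \op{tot}(E_{\bullet,N'}) \to 0$ and splitting off $T$ as above exhibits $T$ as isomorphic in $\dabsFact{w}$ to $S := \op{tot}\bigl(\op{tot}(E_{\bullet,0}) \to \cdots \to \op{tot}(E_{\bullet,N'})\bigr)$. Since $\op{Acycl}(\mathcal E, w)$ is thick and the totalization of a bounded complex is an iterated extension of shifts of its terms, $S \in \op{Acycl}(\mathcal E, w)$, as required. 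The main obstacle throughout is precisely this finiteness requirement: the infinite coproducts and products used in Propositions~\ref{proposition: adapted co-replacement} and~\ref{proposition: adapted contra-replacement} and in Lemma~\ref{lemma: infinite tots are acyclic} are unavailable in the absolute setting, so the whole argument turns on truncating those resolutions to finite length, which the stabilization hypotheses are designed to allow; the remaining points (exactness of the truncated complexes, the componentwise application of the hypothesis) are routine since $\op{Fact}(w)$ is abelian with componentwise kernels and cokernels.
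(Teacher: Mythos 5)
Your proof is correct and follows the same route the paper intends: it replicates the argument of Proposition~\ref{proposition: adapted co-replacement}, using the cokernel/kernel stabilization hypothesis to truncate the resolutions and the bicomplex to bounded ones, so that all totalizations are finite and no coproducts or products are needed. The paper's own proof is exactly this observation stated in one sentence; your write-up simply supplies the details (modulo a harmless indexing slip in the bounded complex $0 \to F \to E^0 \to \cdots \to E^{N-1} \to Z \to 0$, where the term $E^N$ should appear before its cokernel $Z$).
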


\begin{proof}
 The proof proceeds in a fashion completely analogous to that of Proposition~\ref{proposition: adapted co-replacement}, or Proposition~\ref{proposition: adapted contra-replacement} with the dual set of assumptions, with the exception that new hypothesis allows one to deal with a bounded bicomplex, obviating the need for coproducts or products in the totalization. 
\end{proof}

\begin{remark}
 In Section~\ref{section: resolutions}, we will see another method for producing injective or projective resolutions of factorizations. These will provide more control than those appearing in the arguments of the proof of Proposition~\ref{proposition: adapted co-replacement}.
\end{remark}

Following the analogy with derived categories of Abelian categories, one can realize the various derived categories of factorizations as homotopy categories of factorizations with injective or projective components.

\begin{lemma} \label{lemma: injective, projective fact orthogonal}
 Let $I$ be an object of $\op{Fact}(w)$ with $I^{-1},I^{0}$ injective objects of $\mathcal A$. Let $C$ be a co-acyclic factorization. Then,
 \begin{displaymath}
  \op{Hom}_{\op{K}(\op{Fact} w)}(C,I) = 0.
 \end{displaymath}
 
 Let $P$ be an object of $\op{Fact}(w)$ with $P^{-1},P^{0}$ projective objects of $\mathcal A$. Let $C$ be a contra-acyclic factorization. Then,
 \begin{displaymath}
  \op{Hom}_{\op{K}(\op{Fact} w)}(P,C) = 0.
 \end{displaymath}
\end{lemma}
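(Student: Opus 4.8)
The plan is to prove the first assertion in detail and obtain the second by the dual argument (reversing all arrows, exchanging injectives with projectives, coproducts with products, and the two slots of $\op{Hom}$). First I would reduce to totalizations of bounded exact complexes. Consider the full subcategory
\[
 \mathcal N := \{\, C \in \op{K}(\op{Fact} w) \mid \op{Hom}_{\op{K}(\op{Fact} w)}(C, J) = 0 \text{ for every } J \in \op{Fact}(w) \text{ with injective components} \,\}.
\]
Because $\Phi$ is an autoequivalence it is exact, hence preserves injectives, so $J[n]$ has injective components whenever $J$ does; combined with $\op{Hom}_{\op{K}(\op{Fact} w)}(C[n], J) = \op{Hom}_{\op{K}(\op{Fact} w)}(C, J[-n])$ this shows $\mathcal N$ is stable under the shift. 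It is stable under cones (by the long exact sequence of $\op{Hom}_{\op{K}(\op{Fact} w)}(-, J)$ attached to a triangle) and under direct summands, hence is a thick subcategory. Finally, coproducts in $\op{K}(\op{Fact} w)$ are formed termwise and $\op{Hom}$ out of a coproduct is a product, so — products of abelian groups being exact — $\op{Hom}_{\op{K}(\op{Fact} w)}(\coprod_\alpha C_\alpha, J) = \prod_\alpha \op{Hom}_{\op{K}(\op{Fact} w)}(C_\alpha, J)$ and $\mathcal N$ is closed under small coproducts. Since $\op{Co-acycl}(\mathcal A, w)$ is by definition the smallest thick, coproduct-closed subcategory of $\op{K}(\op{Fact} w)$ containing the totalizations of bounded exact complexes, it suffices to prove $\op{tot}(E_\bullet) \in \mathcal N$ for every bounded exact complex $E_\bullet$ in $\op{Fact}(w)$; since $I$ is such a $J$, this gives the first claim.

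So let $E_\bullet$ be a bounded exact complex of factorizations and $J$ a factorization with injective components. As $E_\bullet$ is bounded, $\op{tot}(E_\bullet) = \bigoplus_l E_l[-l]$ is a finite direct sum, so
\[
 \textbf{Hom}_w^\bullet(\op{tot}(E_\bullet), J) = \bigoplus_l \textbf{Hom}_w^\bullet(E_l[-l], J)
\]
as graded abelian groups. Reading off the totalization formulas of Definition~\ref{definition: totalizations}, the differential $\phi_{\op{tot}(E_\bullet)}$ is the sum of the block-diagonal differential $\bigoplus_l \phi_{E_l[-l]}$ and a term assembled from the connecting maps $g_l$ of $E_\bullet$; correspondingly the differential on $\textbf{Hom}_w^\bullet(\op{tot}(E_\bullet), J)$ splits as $\mathfrak d_0 + \mathfrak d_1$, where $\mathfrak d_0$ preserves the index $l$ and restricts on each summand to the ordinary differential of $\textbf{Hom}_w^\bullet(E_l[-l], J)$, while $\mathfrak d_1$ is precomposition with the $g_l$-term and changes $l$ by one while preserving the internal $\textbf{Hom}_w$-degree, which I call $m$. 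Thus $\textbf{Hom}_w^\bullet(\op{tot}(E_\bullet), J)$ is the total complex of a bicomplex concentrated in the finitely many columns $\{l : E_l \neq 0\}$, and its row in internal degree $m$ is the complex of abelian groups $\textbf{Hom}_w^m(E_\bullet, J)$ obtained by applying the additive functor $\textbf{Hom}_w^m(-, J)$ termwise to $E_\bullet$.

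Now every row is exact. Indeed $\textbf{Hom}_w^m(-, J) = \op{Hom}_{\mathcal A}\big((-)^{-1}, (J[m])^{-1}\big) \oplus \op{Hom}_{\mathcal A}\big((-)^0, (J[m])^0\big)$; the objects $(J[m])^{-1}$ and $(J[m])^0$ are components of $J$ up to a power of $\Phi$, hence injective in $\mathcal A$; and since kernels and cokernels in $\op{Fact}(w)$ are computed componentwise (Lemma~\ref{lemma: fact is an abelian category}), exactness of $E_\bullet$ forces exactness of the component complexes $E_\bullet^{-1}$ and $E_\bullet^0$ in $\mathcal A$, so $\op{Hom}_{\mathcal A}(-, L)$ with $L$ injective carries each of them to an exact complex. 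A bicomplex concentrated in finitely many columns with exact rows has acyclic total complex (the row filtration is finite in each total degree, and the associated spectral sequence, whose first page is the row cohomology, converges to the cohomology of the total complex, which is therefore zero); hence $\textbf{Hom}_w^\bullet(\op{tot}(E_\bullet), J)$ is acyclic, and in particular
\[
 \op{Hom}_{\op{K}(\op{Fact} w)}(\op{tot}(E_\bullet), J) = H^0\big(\textbf{Hom}_w^\bullet(\op{tot}(E_\bullet), J)\big) = 0,
\]
so $\op{tot}(E_\bullet) \in \mathcal N$. The second assertion is proved identically with $\textbf{Hom}_w^\bullet(P, -)$ in place of $\textbf{Hom}_w^\bullet(-, J)$: the subcategory $\{C : \op{Hom}_{\op{K}(\op{Fact} w)}(P, C) = 0 \text{ for all } P \text{ with projective components}\}$ is thick and closed under products (as $\op{Hom}$ into a product is a product), $\op{Ctr-acycl}(\mathcal A, w)$ is generated under products by totalizations of bounded exact complexes, and the rows $\textbf{Hom}_w^m(P, E_\bullet) = \op{Hom}_{\mathcal A}\big(P^{-1}, (E_\bullet[m])^{-1}\big) \oplus \op{Hom}_{\mathcal A}\big(P^0, (E_\bullet[m])^0\big)$ of the analogous bicomplex are exact because $P^{-1}, P^0$ are projective and $E_\bullet$ is componentwise exact.

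The step I expect to be the main obstacle is the bicomplex identification in the second paragraph: unravelling how the $\mathbb Z$-grading of $\textbf{Hom}_w^\bullet$ interacts with the $\Phi$-twisted two-periodicity of factorizations, confirming that the $g_l$-part of $\phi_{\op{tot}(E_\bullet)}$ genuinely preserves the internal $\textbf{Hom}_w$-degree, and checking that the resulting rows are precisely $\textbf{Hom}_w^m(-, J)$ applied termwise to $E_\bullet$. Everything else — the thick-subcategory bookkeeping, the long exact sequences, and the vanishing of the total complex of a column-bounded bicomplex with exact rows — is routine.
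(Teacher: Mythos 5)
Your proof is correct, but it runs along a different track from the paper's. The paper first reduces, via the observation that any bounded exact complex is an iterated extension of totalizations of short exact sequences, to the case of a single short exact sequence $0 \to E_1 \to E_2 \to E_3 \to 0$; it then compares the totalization $C$ with the cone $C(g_2)$ via the triangle $E_1[1] \to C(g_2) \to C \to E_1[2]$, uses injectivity of the components of $I$ to see that $0 \to \textbf{Hom}_w^*(E_3,I) \to \textbf{Hom}_w^*(E_2,I) \to \textbf{Hom}_w^*(E_1,I) \to 0$ is exact, and concludes with the $5$-lemma applied to the resulting morphism of long exact sequences. You instead handle an arbitrary bounded exact complex $E_\bullet$ in one stroke: you filter $\textbf{Hom}_w^\bullet(\op{tot}(E_\bullet), J)$ by the column index of the underlying bicomplex and observe that the rows, being $\textbf{Hom}_w^m(-,J)$ applied termwise to $E_\bullet$, are exact because the components of $J$ are injective and kernels and cokernels in $\op{Fact}(w)$ are computed componentwise. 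Both arguments ultimately rest on the same fact (exactness of $\op{Hom}_{\mathcal A}(-,L)$ for $L$ injective); what yours buys is that it bypasses the slightly delicate reduction of a general totalization to iterated cones of SES-totalizations, at the cost of the sign and degree bookkeeping in the bicomplex identification, which you correctly flag as the one step needing care. Your surrounding reductions (thickness and coproduct-closure of the left orthogonal $\mathcal N$, with $\op{Hom}$ out of a coproduct being a product and products being exact in abelian groups) are sound, and the dualization for the projective/contra-acyclic half goes through as you describe.
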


\begin{proof}
 If $C_s, s \in S$ is a collection of objects left orthogonal to $I$, then $\bigoplus_{s\in S} C_s$ is also left orthogonal to $I$. We can reduce to checking that $I$ is right orthogonal to totalizations of exact sequences. Any exact sequence is an iterated sequence of totalizations of short exact sequences. Thus, it suffices to check that $I$ is left orthogonal to totalizations of short exact sequences. 
 
 Take a short exact sequence of factorizations, 
 \begin{displaymath}
  0 \to E_1 \overset{g_1}{\to} E_2 \overset{g_2}{\to} E_3 \to 0.
 \end{displaymath}
 Let $C$ be the totalization of this short exact sequence. By definition, there is a triangle,
 \begin{displaymath}
  E_1[1] \overset{h}{\to} C(g_2) \to C \to E_1[2],
 \end{displaymath}
 in $\op{K}(\op{Fact} w)$. Therefore, there is a long exact sequence,
 \begin{gather*}
  \cdots \to \op{Hom}_{\op{K}(\op{Fact} w)}(C(g_2)[i+1],I) \to \op{Hom}_{\op{K}(\op{Fact} w)}(E_1[i+2],I) \to \op{Hom}_{\op{K}(\op{Fact} w)}(C[i],I) \\ \to \op{Hom}_{\op{K}(\op{Fact} w)}(C(g_2)[i],I) \to \op{Hom}_{\op{K}(\op{Fact} w)}(E_1[i+1],I) \to \cdots
 \end{gather*}
 Showing that
 \begin{displaymath}
  \op{Hom}_{\op{K}(\op{Fact} w)}(C[i],I) = 0
 \end{displaymath}
 for all $i$ is equivalent to showing that the maps,
 \begin{displaymath}
  \op{Hom}_{\op{K}(\op{Fact} w)}(C(g_2)[i],I) \to \op{Hom}_{\op{K}(\op{Fact} w)}(E_1[i+1],I),
 \end{displaymath}
 are isomorphisms for all $i$.

 There is a commutative diagram,
 \begin{center}
 \begin{tikzpicture}[description/.style={fill=white,inner sep=2pt}]
 \matrix (m) [matrix of math nodes, row sep=2em, column sep=2em, text height=1.5ex, text depth=0.25ex]
 {  E_1 & E_2 & E_3  \\
    C(g_2)[-1] & E_2 & E_3 \\ };
 \path[->,font=\scriptsize]
  (m-1-1) edge (m-1-2) 
  (m-1-2) edge node[above] {$g_2$}(m-1-3)

  (m-2-1) edge (m-2-2) 
  (m-2-2) edge node[above] {$g_2$}(m-2-3)
  
  (m-1-1) edge node[left] {$h[-1]$} (m-2-1)
  (m-1-2) edge node[left] {$=$} (m-2-2)
  (m-1-3) edge node[left] {$=$} (m-2-3)
 ;
 \end{tikzpicture} 
 \end{center}
 Apply $\textbf{Hom}_w^*(\bullet, I)$ to this diagram to get a commutative diagram of complexes,
 \begin{center}
 \begin{tikzpicture}[description/.style={fill=white,inner sep=2pt}]
 \matrix (m) [matrix of math nodes, row sep=2em, column sep=2em, text height=1.5ex, text depth=0.25ex]
 {  \textbf{Hom}_w^*(E_3, I) & \textbf{Hom}_w^*(E_2, I) & \textbf{Hom}_w^*(C(g_2)[-1], I) \\
    \textbf{Hom}_w^*(E_3, I) & \textbf{Hom}_w^*(E_2, I) & \textbf{Hom}_w^*(E_1, I)  \\
     };
 \path[->,font=\scriptsize]
  (m-1-1) edge (m-1-2) 
  (m-1-2) edge (m-1-3)

  (m-2-1) edge (m-2-2) 
  (m-2-2) edge (m-2-3)
  
  (m-1-1) edge node[left] {$=$} (m-2-1)
  (m-1-2) edge node[left] {$=$} (m-2-2)
  (m-1-3) edge node[left] {$h[-1]$} (m-2-3)
 ;
 \end{tikzpicture} 
 \end{center}
 
 Since $I$ has injective components, the sequence,
 \begin{displaymath}
  0 \to \textbf{Hom}_w^*(E_3, I) \to \textbf{Hom}_w^*(E_2, I) \to \textbf{Hom}_w^*(E_1, I) \to 0,
 \end{displaymath}
 is an exact sequence of complexes.

 Taking cohomology of all the complexes in the diagram above induces a morphism of long exact sequences,
 \begin{center}
 \begin{tikzpicture}[description/.style={fill=white,inner sep=2pt},scale=0.9, every node/.style={scale=0.9}]
 \matrix (m) [matrix of math nodes, row sep=2em, column sep=1.5em, text height=1.5ex, text depth=0.25ex]
 {  \cdots & \op{Hom}_{\op{K}(\op{Fact} w)}(E_3[i], I) & \op{Hom}_{\op{K}(\op{Fact} w)}(E_2[i], I) & \op{Hom}_{\op{K}(\op{Fact} w)}(C(g_2)[i-1], I) & \cdots  \\
    \cdots & \op{Hom}_{\op{K}(\op{Fact} w)}(E_3[i], I) & \op{Hom}_{\op{K}(\op{Fact} w)}(E_2[i], I) & \op{Hom}_{\op{K}(\op{Fact} w)}(E_1[i], I)  & \cdots \\
    };
 \path[->,font=\scriptsize]
  (m-1-1) edge (m-1-2) 
  (m-1-2) edge (m-1-3)
  (m-1-3) edge (m-1-4)
  (m-1-4) edge (m-1-5)

  (m-2-1) edge (m-2-2) 
  (m-2-2) edge (m-2-3)
  (m-2-3) edge (m-2-4)
  (m-2-4) edge (m-2-5)
  
  (m-1-2) edge node[left] {$=$} (m-2-2)
  (m-1-3) edge node[left] {$=$} (m-2-3)
  (m-1-4) edge node[left] {$h[i-1]$} (m-2-4)
 ;
 \end{tikzpicture} 
 \end{center}
 From the $5$-lemma, we can conclude that $h[i]$ is an isomorphism for all $i$.
 
 The proof for contra-acyclic and projective factorizations is completely analogous.
\end{proof}

In the case of factorizations with injective or projective components, we do not need to take any further quotients.

\begin{corollary} \label{corollary: homotopy category of inj/proj is derived cat}
 If $\mathcal A$ has enough injectives and coproducts of injectives are injective, then the composition
 \begin{displaymath}
  \op{Q}_{\op{inj}}: \op{K}(\op{Fact} \mathcal Inj \ w) \to \op{K}(\op{Fact} w) \to \dcoFact{w}
 \end{displaymath}
 is an equivalence.
 
 If $\mathcal A$ has enough projectives and products of projectives are projective, then the composition
 \begin{displaymath}
  \op{Q}_{\op{proj}}: \op{K}(\op{Fact} \mathcal Proj \ w) \to \op{K}(\op{Fact} w) \to \dconFact{w}
 \end{displaymath}
 is an equivalence. 
 
 If $\mathcal A$ has finite injective dimension, then the composition
 \begin{displaymath}
  \op{Q}_{\op{inj}}: \op{K}(\op{Fact} \mathcal Inj \ w) \to \op{K}(\op{Fact} w) \to \dabsFact{w}
 \end{displaymath}
 is an equivalence.
 
 If $\mathcal A$ has finite projective dimension, then the composition
 \begin{displaymath}
  \op{Q}_{\op{proj}}: \op{K}(\op{Fact} \mathcal Proj \ w) \to \op{K}(\op{Fact} w) \to \dabsFact{w}
 \end{displaymath}
 is an equivalence.
\end{corollary}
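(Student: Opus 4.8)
The plan is to obtain each of the four equivalences by a two-out-of-three argument: combine the appropriate replacement proposition (Proposition \ref{proposition: adapted co-replacement}, \ref{proposition: adapted contra-replacement}, or \ref{proposition: adapted abs-replacement}) with the orthogonality in Lemma \ref{lemma: injective, projective fact orthogonal}. I will describe the first statement in detail; the other three are formally the same after the evident substitutions of ``projective'' for ``injective'', ``product'' for ``coproduct'', and the relevant flavor of derived category.

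Take $\mathcal E = \mathcal Inj$, the full subcategory of injective objects of $\mathcal A$. Since $\Phi$ is an autoequivalence it preserves injectives, so $\mathcal Inj$ is $\Phi$-stable; it is closed under coproducts by hypothesis, and $\mathcal A$ has enough injectives. Thus Proposition \ref{proposition: adapted co-replacement} applies and yields an equivalence $\op{Q}_{\mathcal Inj} : \dcoFact{\mathcal Inj, w} \to \dcoFact{w}$, induced by the composite $\op{K}(\op{Fact} \mathcal Inj, w) \to \op{K}(\op{Fact} w) \to \dcoFact{w}$. That composite is precisely $\op{Q}_{\op{inj}}$, so $\op{Q}_{\op{inj}}$ factors as the Verdier localization $\op{K}(\op{Fact} \mathcal Inj, w) \to \dcoFact{\mathcal Inj, w}$ followed by the equivalence $\op{Q}_{\mathcal Inj}$. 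It remains only to show that this localization is an equivalence, i.e. that every $\mathcal Inj$-co-acyclic factorization is a zero object of $\op{K}(\op{Fact} \mathcal Inj, w)$.

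Let $C$ be such a factorization. Since $\mathcal Inj$ is closed under coproducts, $C$ is again co-acyclic when regarded in $\op{K}(\op{Fact} w)$. As $C$ has injective components, Lemma \ref{lemma: injective, projective fact orthogonal} applied with $I = C$ gives $\op{Hom}_{\op{K}(\op{Fact} w)}(C, C) = 0$, and since $\op{Fact}(\mathcal Inj, w)$ is a full subcategory this equals $\op{Hom}_{\op{K}(\op{Fact} \mathcal Inj, w)}(C, C)$; hence $\id_C = 0$ and $C \cong 0$. This proves the first statement, and the second follows identically from Proposition \ref{proposition: adapted contra-replacement} with the projective half of Lemma \ref{lemma: injective, projective fact orthogonal}. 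For the third and fourth, one substitutes Proposition \ref{proposition: adapted abs-replacement}: finite injective (respectively projective) dimension supplies enough injectives (projectives) and, by the standard characterization of injective (respectively projective) dimension, a uniform $N$ such that in every exact sequence $0 \to A \to E_0 \to \cdots \to E_n$ with the $E_i$ injective and $n \geq N$ the cokernel of $E_{n-1} \to E_n$ is again injective — dually for projectives and syzygies — which is exactly the second hypothesis of that proposition. One then argues as above that $\op{Acycl}(\mathcal Inj, w)$, respectively $\op{Acycl}(\mathcal Proj, w)$, is trivial.

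The construction is essentially a formal assembly of earlier results, so there is no serious obstacle; the one point needing care is this last step. Under the hypothesis of finite injective dimension, $\mathcal A$ need not have coproducts, so ``co-acyclic'' is unavailable and Lemma \ref{lemma: injective, projective fact orthogonal} cannot be quoted verbatim. The remedy is to observe that the proof of that lemma manipulates only totalizations of (short) exact sequences before passing to thick closures, and therefore already shows that a factorization with injective components is right-orthogonal in $\op{K}(\op{Fact} w)$ to all of $\op{Acycl}(w)$ — closure under coproducts, which would upgrade this to $\op{Co-acycl}(w)$, being needed only for the form in which the lemma is stated. That refinement is precisely what is required to run the argument of the previous paragraph with $\op{Acycl}$ in place of $\op{Co-acycl}$.
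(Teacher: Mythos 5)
Your proof is correct and follows the same route as the paper: combine the replacement propositions with the orthogonality of Lemma \ref{lemma: injective, projective fact orthogonal} to kill the acyclic subcategory via $\op{Hom}(C,C)=0$. Your closing remark — that in the finite-dimension cases one needs orthogonality against $\op{Acycl}$ rather than $\op{Co\text{-}acycl}$, which the lemma's proof (but not its statement) supplies — is exactly the refinement the paper's proof also relies on when it invokes the lemma for "totally acyclic" factorizations.
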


\begin{proof}
 Lemma~\ref{lemma: injective, projective fact orthogonal} shows that any co-acyclic or totally acyclic factorization with injective components is zero in the homotopy category and any contra-acyclic or totally-acyclic factorization with projective components is zero in the homotopy category. Then Proposition~\ref{proposition: adapted co-replacement} gives the first statement, Proposition~\ref{proposition: adapted contra-replacement} gives the second, and Proposition~\ref{proposition: adapted abs-replacement} gives the last two.
\end{proof}

Finally, we record a fact that allows one to reduce some arguments to factorizations with zero component morphisms.

\begin{lemma}
For any factorization $E = (E^{-1}, E^0, \phi_{E}^{-1}, \phi_{E}^0)$, there is an exact sequence in $\op{Fact}(w)$,
\[
0 \to \op{ker} \phi_{E}^{0} \overset{f}{\longrightarrow} (E^{-1}, E^{-1}, w_{\Phi^{-1}(E^{-1})}, \op{1}_{E^{-1}}) \overset{g}{\longrightarrow} E \overset{h}{\longrightarrow} \op{coker} \phi_{E}^{0} \to 0.
\]
This gives rise to an exact triangle in $\dabsFact{w}$,
\[
\op{coker} \phi_{E}^{0} \to \op{ker} \phi_{E}^{0}[2] \to E.
\]
\label{lem: breakemup}
\end{lemma}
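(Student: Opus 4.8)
The plan is to check the four-term sequence by hand, componentwise, and then to extract the triangle by discarding the middle term, which will turn out to be contractible.

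First one must confirm that the three ``outer'' terms are honest factorizations of $(\mathcal A,\Phi,w)$. Set $K := \op{ker}\phi_E^0$ and $Q := \op{coker}\phi_E^0$; these are objects of $\mathcal A$ that we regard as the factorizations $(0,K,0,0)$ and $(0,Q,0,0)$, so we need $w_K = 0 = w_Q$. Writing $\iota\colon K\hookrightarrow E^{-1}$ for the inclusion, the second factorization identity gives $w_{E^{-1}}\circ\iota = \Phi(\phi_E^{-1})\circ\phi_E^0\circ\iota = 0$, while naturality of $w$ gives $w_{E^{-1}}\circ\iota = \Phi(\iota)\circ w_K$; since $\Phi$ is exact, $\Phi(\iota)$ is monic, so $w_K = 0$. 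Dually, with $\pi\colon E^0\twoheadrightarrow Q$ the projection, the first factorization identity together with $w_{\Phi(A)} = \Phi(w_A)$ yields $w_{E^0} = \Phi(\phi_E^0\circ\phi_E^{-1})$, hence $\Phi(\pi)\circ w_{E^0} = \Phi(\pi\circ\phi_E^0)\circ\Phi(\phi_E^{-1}) = 0$; since $\Phi(\pi)$ is epic, $w_Q = 0$. For the middle term $Z := (E^{-1},E^{-1},w_{\Phi^{-1}(E^{-1})},1_{E^{-1}})$ both factorization identities collapse to the single equation $\Phi(w_{\Phi^{-1}(E^{-1})}) = w_{E^{-1}}$, which is the standing hypothesis applied to $A = \Phi^{-1}(E^{-1})$.

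Next I would take $f = (0,\iota)$, $g = (1_{E^{-1}},\phi_E^0)$ and $h = (0,\pi)$, and verify that each is a morphism of factorizations; the only nonobvious commutativity is $\phi_E^{-1}\circ\Phi^{-1}(\phi_E^0) = w_{\Phi^{-1}(E^{-1})}$ needed for $g$, which is $\Phi^{-1}$ applied to the second factorization identity for $E$. Exactness of the four-term sequence is then checked componentwise, since kernels and cokernels in $\op{Fact}(w)$ are computed componentwise (Lemma~\ref{lemma: fact is an abelian category}): in the $(-1)$-slot the sequence is $0\to 0\to E^{-1}\xrightarrow{1}E^{-1}\to 0\to 0$, which is exact, and in the $0$-slot it is $0\to K\xrightarrow{\iota}E^{-1}\xrightarrow{\phi_E^0}E^0\xrightarrow{\pi}Q\to 0$, exact by the definitions of $K$ and $Q$.

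For the triangle, the key observation is that $Z$ is contractible: the pair $(h^{-1},h^0) = (0,1_{E^{-1}})$ is a homotopy from $1_Z$ to $0$, both defining equations reducing to $1_{E^{-1}} = 1_{E^{-1}}$ (again using $\Phi(w_{\Phi^{-1}(E^{-1})}) = w_{E^{-1}}$). Hence $Z\cong 0$ in $\op{K}(\op{Fact} w)$, and a fortiori in $\dabsFact{w}$. Now split the four-term sequence at $M := \op{im}(g)$ into the two short exact sequences $0\to K\to Z\to M\to 0$ and $0\to M\to E\to Q\to 0$. Each short exact sequence of factorizations induces an exact triangle in $\dabsFact{w}$ — the usual fact that, in $\op{K}(\op{Fact} w)$, the cone of a monomorphism of factorizations is a quasi-isomorphism onto its cokernel, proved exactly as for chain complexes. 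From the first triangle, $Z\cong 0$ forces $M\cong K[1]$; substituting into the second triangle gives the exact triangle $K[1]\to E\to Q\to K[2]$, which is, up to rotation, the asserted triangle $\op{coker}\phi_E^0\to\op{ker}\phi_E^0[2]\to E$.

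The one step that relies on more than diagram-chasing and the formal setup already in place is the verification that $w$ annihilates $\op{ker}\phi_E^0$ and $\op{coker}\phi_E^0$; this is where the two factorization identities must be combined with the compatibility $w_{\Phi(A)} = \Phi(w_A)$ and with naturality of $w$, and it is the point I would handle with the most care. The ``short exact sequence $\Rightarrow$ exact triangle'' principle and the final shift bookkeeping are standard and use nothing beyond the triangulated structure of $\dabsFact{w}$ (Lemma~\ref{lemma: Dfact is triang}).
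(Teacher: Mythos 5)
Your proof is correct and, for the four-term exact sequence, follows the same route as the paper: the same morphisms $f=(0,\iota)$, $g=(1_{E^{-1}},\phi_E^0)$, $h=(0,\pi)$ and componentwise verification of exactness (the paper's proof contains a typo, writing $g^0=\phi_E^{-1}$ where it must mean $\phi_E^0$, as you have it). You are in fact more complete than the paper on two points it leaves implicit: the verification that $w$ vanishes on $\op{ker}\phi_E^0$ and $\op{coker}\phi_E^0$, which is genuinely needed for these to be factorizations of the form $(0,A,0,0)$ under the paper's convention, and the derivation of the exact triangle, which the paper asserts without argument; your route --- contractibility of the middle term via the homotopy $(0,1_{E^{-1}})$, then splitting the four-term sequence into two short exact sequences and using that short exact sequences of factorizations yield triangles in $\dabsFact{w}$ --- is the intended one. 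One bookkeeping caveat: the triangle you correctly obtain, $K[1]\to E\to Q\to K[2]$, rotates to $Q\to K[2]\to E[1]$, not to $Q\to K[2]\to E$; the discrepancy is an off-by-one shift in the paper's own display (test $E=(0,A,0,0)$, where $K=0$ and $Q\cong E$, so the displayed triangle would force $E\cong E[1]$). This is harmless for Corollary~\ref{cor: single object generation}, since triangulated subcategories are closed under shifts, but you should not assert that your triangle agrees with the displayed one ``up to rotation.''
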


\begin{proof}
 The components of the morphisms $f,g,h$ are given by 
 \[
  f^{-1} = 0,  f^0  = i, g^{-1} = 1_{E^{-1}},  g^0 = \phi_{E}^{-1}, h^{-1} = 0 , h^0 = \pi
 \]
 where is $i: \op{ker} \phi_E^{0} \to E^{-1}$ is the inclusion and $\pi: E^{0} \to \op{coker} \phi_E^{0}$ is the projection. It is straightforward to see that the sequeneces associated to each component are exact.
\end{proof}

\begin{definition}
 Let $\mathcal T$ be a triangulated category. A subcategory $\mathcal S$ is said to \newterm{triangularly generate} $\mathcal T$ if the smallest triangulated subcategory $\mathcal T$ containing $\mathcal S$ is $\mathcal T$. 
\end{definition}

\begin{remark}
 The usual notion of generation includes closure under formation of summands \cite{BV}. Our language reflects the fact that only formation of cones is allowed.
\end{remark}

\begin{corollary}
 The categories $\dabsFact{w}, \dcoFact{w}, \dconFact{w}$ are each triangularly generated by factorizations of the form $(0, A, 0, 0)$ for $A \in \mathcal A$.
\label{cor: single object generation}
\end{corollary}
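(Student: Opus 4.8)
The plan is to read this off Lemma~\ref{lem: breakemup}. Fix one of the three categories, say $\dabsFact{w}$; the other two are handled by the same argument (see the final paragraph). Let $\mathcal S$ denote the smallest triangulated subcategory containing every factorization of the form $(0,A,0,0)$ with $A \in \mathcal A$ (such an $A$ necessarily has $w_A = 0$, for otherwise $(0,A,0,0)$ is not a factorization). It suffices to show that every object $E$ of $\op{Fact}(w)$ lies in $\mathcal S$.

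So let $E = (E^{-1}, E^0, \phi_E^{-1}, \phi_E^0)$ be arbitrary. Lemma~\ref{lem: breakemup} provides an exact triangle
\[
\op{coker}\phi_E^0 \to \op{ker}\phi_E^0[2] \to E \to (\op{coker}\phi_E^0)[1]
\]
in $\dabsFact{w}$. In this triangle $\op{ker}\phi_E^0$ and $\op{coker}\phi_E^0$ are viewed as factorizations through the convention $A = (0,A,0,0)$, which is legitimate precisely because their $w$-actions vanish — this is implicit in the fact that the four-term sequence of Lemma~\ref{lem: breakemup} is a sequence in $\op{Fact}(w)$. Unwinding the definition of the shift, $\op{ker}\phi_E^0[2] = (0,\Phi(\op{ker}\phi_E^0),0,0)$, which is again an object of our generating family. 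Hence the first two terms of the triangle lie in $\mathcal S$, and therefore so does $E$, being isomorphic to the cone of the first morphism. Since $E$ was arbitrary, $\mathcal S = \dabsFact{w}$.

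For $\dcoFact{w}$ and $\dconFact{w}$ the only extra point is that the triangle of Lemma~\ref{lem: breakemup} persists in these categories. That triangle is obtained by breaking the four-term exact sequence into two short exact sequences of factorizations and invoking that the totalization of a short exact sequence of factorizations is totally acyclic. Such a factorization is zero not merely in $\dabsFact{w}$ but also in $\dcoFact{w}$ and $\dconFact{w}$, since $\op{Acycl}(w) \subseteq \op{Co-acycl}(w)$ and $\op{Acycl}(w) \subseteq \op{Ctr-acycl}(w)$; so the construction of the triangle goes through verbatim in all three categories, and the argument above applies. I do not expect any genuine difficulty here: the corollary is essentially a repackaging of Lemma~\ref{lem: breakemup}, and the only things to keep straight are the identification of the two outer vertices of the triangle as members of the generating family and the observation that applying $[2]$ keeps us inside that family.
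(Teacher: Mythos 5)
Your proof is correct and is exactly the paper's argument: the paper's proof of this corollary is the one-line statement that it follows immediately from the exact triangle of Lemma~\ref{lem: breakemup}, and you have simply (and accurately) spelled out why — both outer vertices of that triangle are of the form $(0,A,0,0)$ up to shift, and the triangle persists in the co- and contra-derived categories since totally acyclic factorizations are both co-acyclic and contra-acyclic.
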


\begin{proof}
 This follows immediately from the exact triangle in Lemma~\ref{lem: breakemup}.
\end{proof}

\begin{remark}
 In fact, $\dabsFact{w}$ is strongly triangularly generated by objects of the form $(0, A, 0, 0)$ for $A \in \mathcal A$ as we only need to take a single cone. See \cite{BV} for a definition of strong generation.
\end{remark}

\section{Constructions of resolutions} \label{section: resolutions}

In this section, we provide a useful method of replacing a factorization by a co-quasi-isomorphic factorization of injectives or by a contra-quasi-isomorphic factorization of projectives. We saw a few simple consequences of the existence of such replacements at the end of Section \ref{section: basics}. In Section \ref{section: applications}, we will present some more computationally-useful applications.

We first analyze a way to construct factorizations starting from complexes over $\mathcal A$. Assume we have two complexes of objects of $\mathcal A$
\begin{center}
 \begin{tikzpicture}[description/.style={fill=white,inner sep=2pt}]
 \matrix (m) [matrix of math nodes, row sep=1em, column sep=2.5em, text height=1.5ex, text depth=0.25ex]
 {  \cdots & A^{-1}_{-1} & A^{-1}_0 & A^{-1}_1 & A^{-1}_2 & \cdots \\ 
    \cdots & A^{0}_{-1} & A_0^0 & A_1^0 & A^0_2 & \cdots. \\
    };
 \path[->,font=\scriptsize]

  (m-1-1) edge node[above] {$d_{-1}^{-1}$}(m-1-2)
  (m-1-2) edge node[above] {$d_0^{-1}$}(m-1-3)
  (m-1-3) edge node[above] {$d_1^{-1}$}(m-1-4)
  (m-1-4) edge node[above] {$d_2^{-1}$}(m-1-5)
  (m-1-5) edge node[above] {$d_3^{-1}$}(m-1-6)
  
  (m-2-1) edge node[above] {$d_{-1}^0$}(m-2-2)
  (m-2-2) edge node[above] {$d_0^0$}(m-2-3)
  (m-2-3) edge node[above] {$d_1^0$}(m-2-4)
  (m-2-4) edge node[above] {$d_2^0$}(m-2-5)
  (m-2-5) edge node[above] {$d_3^0$}(m-2-6)
 ;
 \end{tikzpicture} 
\end{center}

If the either the complexes is infinite, we assume that $\mathcal A$ has small coproducts or small products. Define the following two objects of $\mathcal A$ by combining even and odd components of the two complexes:
\begin{align*}
 \op{tot}^{\bigoplus}(A_{\bullet})^{-1} & := \bigoplus_{2l} \Phi^{-l}(A^{-1}_{2l}) \oplus \bigoplus_{2l+1} \Phi^{-l-1}(A^0_{2l+1})  \\
 \op{tot}^{\bigoplus}(A_{\bullet})^0 & := \bigoplus_{2l} \Phi^{-l}(A^0_{2l}) \oplus \bigoplus_{2l+1} \Phi^{-l}(A^{-1}_{2l+1}). 
\end{align*}
Similarly, set
\begin{align*}
 \op{tot}^{\prod}(A_{\bullet})^{-1} & := \prod_{2l} \Phi^{-l}(A^{-1}_{2l}) \times \prod_{2l+1} \Phi^{-l-1}(A^0_{2l+1})  \\
 \op{tot}^{\prod}(A_{\bullet})^0 & := \prod_{2l} \Phi^{-l}(A^0_{2l}) \times \prod_{2l+1} \Phi^{-l}(A^{-1}_{2l+1}).
\end{align*}

\begin{definition}
 We say the two complexes $(A^{-1}_{\bullet},A^{0}_{\bullet})$ are \newterm{$\bigoplus$-foldable} if there exists a factorization $A = (\op{tot}^{\bigoplus}(A_{\bullet})^{-1},\op{tot}^{\bigoplus}(A_{\bullet})^{0},\phi^{-1}_A,\phi^0_A)$ such that
 \begin{displaymath}
  \phi^{-1}_{p,q} = \phi^{0}_{p,q} = 0 \text{ for } q > p+1
 \end{displaymath}
 and 
 \begin{align*}
   \phi^{-1}_{2l+1,2l+2} & =    \Phi^{-l-1}(d^{-1}_{2l+2})  : \Phi^{-l-1}(A^{-1}_{2l+1}) \to \Phi^{-l-1}(A^{-1}_{2l+2}) \\
   \phi^{-1}_{2l,2l+1} & =    -\Phi^{-l-1}(d^{0}_{2l+1}) : \Phi^{-l-1}(A^{0}_{2l}) \to \Phi^{-l-1}(A^{0}_{2l+1}) \\
   \phi^{0}_{2l+1,2l+2}  & =   -\Phi^{-l-1}(d^{0}_{2l+2})  : \Phi^{-l-1}(A^{0}_{2l+1}) \to \Phi^{-l-1}(A^{0}_{2l+2}) \\
   \phi^{0}_{2l,2l+1} & =  \Phi^{-l}(d^{-1}_{2l+1}) : \Phi^{-l}(A^{-1}_{2l}) \to \Phi^{-l}(A^{-1}_{2l+1}),
  \end{align*}
 where
 \begin{align*}
  \phi^{-1}_{2l+1,2j+1} & : \Phi^{-l-1}(A^{-1}_{2l+1}) \to \Phi^{-1}(A^{0}) \overset{\phi_A^{-1}}{\to} A^{-1} \to \Phi^{-j-1}(A^{0}_{2j+1}) \\
  \phi^{-1}_{2l+1,2j} & : \Phi^{-l-1}(A^{-1}_{2l+1}) \to \Phi^{-1}(A^{0}) \overset{\phi_A^{-1}}{\to} A^{-1} \to \Phi^{-j}(A^{-1}_{2j}) \\
  \phi^{-1}_{2l,2j+1} & : \Phi^{-l-1}(A^{0}_{2l}) \to \Phi^{-1}(A^{0}) \overset{\phi_A^{-1}}{\to} A^{-1} \to \Phi^{-j-1}(A^{0}_{2j+1}) \\
  \phi^{-1}_{2l,2j} & : \Phi^{-l-1}(A^{0}_{2l}) \to \Phi^{-1}(A^{0}) \overset{\phi_A^{-1}}{\to} A^{-1} \to \Phi^{-j}(A^{-1}_{2j}) \\
 \end{align*}
 and
 \begin{align*}
  \phi^{0}_{2l+1,2j+1} & : \Phi^{-l-1}(A^{0}_{2l+1}) \to A^{-1} \overset{\phi_A^{0}}{\to} A^{0} \to \Phi^{-j}(A^{-1}_{2j+1}) \\
  \phi^{0}_{2l+1,2j} & : \Phi^{-l-1}(A^{0}_{2l+1}) \to A^{-1} \overset{\phi_A^{0}}{\to} A^{0} \to \Phi^{-j}(A^{0}_{2j}) \\
  \phi^{0}_{2l,2j+1} & : \Phi^{-l}(A^{-1}_{2l}) \to A^{-1} \overset{\phi_A^{0}}{\to} A^{0} \to \Phi^{-j}(A^{-1}_{2j+1}) \\
  \phi^{0}_{2l,2j} & : \Phi^{-l}(A^{-1}_{2l}) \to A^{-1} \overset{\phi_A^{0}}{\to} A^{0} \to \Phi^{-j}(A^{0}_{2j}).
 \end{align*}
 Any such factorization $A$ will be called a \newterm{$\bigoplus$-folding} of $(A^{-1}_{\bullet},A^{0}_{\bullet})$, and, in general, a \newterm{$\bigoplus$-folded factorization}.
 
  We say the two complexes $(A^{-1}_{\bullet},A^{0}_{\bullet})$ are \newterm{$\prod$-foldable} if there exists a factorization $A = (\op{tot}^{\prod}(A_{\bullet})^{-1},\op{tot}^{\prod}(A_{\bullet})^{0},\phi^{-1}_A,\phi^0_A)$ such that
 \begin{displaymath}
  \phi^{-1}_{p,q} = \phi^{0}_{p,q} = 0 \text{ for } q > p+1
 \end{displaymath}
 and 
 \begin{align*}
   \phi^{-1}_{2l+1,2l+2} & =    \Phi^{-l-1}(d^{-1}_{2l+2})  : \Phi^{-l-1}(A^{-1}_{2l+1}) \to \Phi^{-l-1}(A^{-1}_{2l+2}) \\
   \phi^{-1}_{2l,2l+1} & =    -\Phi^{-l-1}(d^{0}_{2l+1}) : \Phi^{-l-1}(A^{0}_{2l}) \to \Phi^{-l-1}(A^{0}_{2l+1}) \\
   \phi^{0}_{2l+1,2l+2}  & =   -\Phi^{-l-1}(d^{0}_{2l+2})  : \Phi^{-l-1}(A^{0}_{2l+1}) \to \Phi^{-l-1}(A^{0}_{2l+2}) \\
   \phi^{0}_{2l,2l+1} & =  \Phi^{-l}(d^{-1}_{2l+1}) : \Phi^{-l}(A^{-1}_{2l}) \to \Phi^{-l}(A^{-1}_{2l+1}).
  \end{align*}
 Any such factorization $A$ will be called a \newterm{$\prod$-folding} of $(A^{-1}_{\bullet},A^{0}_{\bullet})$, and, in general, a \newterm{$\prod$-folded factorization}.
 
 We shall often drop the $\bigoplus$ or $\prod$ if the context allows.
\end{definition}

\begin{remark} \label{remark: Kanye}
 As we will see, these definitions are built to allow us to linearize computations using folded factorizations. If both $A^{-1}_{\bullet}$ and $A^0_{\bullet}$ are both bounded below or both bounded above and $A$ is a folding, requiring that 
 \begin{align*}
  \phi^0_A \circ \phi^{-1}_A & = w_{\Phi^{-1}(A^0)} \\
  \Phi(\phi^{-1}_A)\circ \phi^0_A & = w_{A^{-1}}
 \end{align*}
 is equivalent to requiring the following identities of the components of $\phi^{-1}_A$ and $\phi^0_A$:
 \begin{displaymath}
  \sum_{t \in \Z} \phi^0_{t,q} \circ \phi^{-1}_{p,t} =
  \begin{cases}
   0 & p \not = q \\
   w & p=q
  \end{cases}
 \end{displaymath}
 and
 \begin{displaymath}
  \sum_{t \in \Z} \Phi(\phi^{-1}_{t,q}) \circ \phi^{0}_{p,t} =
  \begin{cases} 
   0 & p \not = q \\
   w & p=q.
  \end{cases}
 \end{displaymath}
 As only finitely many terms in these sums will be nonzero, these equations are completely unambiguous.
\end{remark}

\begin{remark} \label{remark: Jay-Z}
 Note that if $A$ folds $(A^{-1}_{\bullet},A^0_{\bullet})$ then $A[1]$ folds $(A^{-1}[1]_{\bullet},A^0[1]_{\bullet})$.
\end{remark}

\begin{lemma} \label{lemma: acyclic folds to acyclic}
 If $A$ is a factorization folding a pair of bounded exact complexes $(A^{-1}_{\bullet},A^0_{\bullet})$, then $A$ is totally-acyclic. 
 
 Assume that $\mathcal A$ has small coproducts. If $A$ is a factorization $\bigoplus$-folding a pair of bounded below exact complexes $(A^{-1}_{\bullet},A^0_{\bullet})$, then $A$ is co-acyclic. 
 
 Assume that $\mathcal A$ has small products. If $A$ is a factorization $\prod$-folding a pair of bounded below exact complexes $(A^{-1}_{\bullet},A^0_{\bullet})$, then $A$ is contra-acyclic. 
\end{lemma}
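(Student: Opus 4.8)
The plan is to reduce all three assertions to the first, prove that one by induction on the number of degrees in which the bounded exact pair $(A^{-1}_\bullet, A^0_\bullet)$ is nonzero, and then recover the two unbounded statements by the telescope device from the proof of Lemma~\ref{lemma: infinite tots are acyclic}. Using Remark~\ref{remark: Jay-Z} and an overall twist by $\Phi$, we may assume the pair is supported in degrees $[0,N]$. If the pair is supported in at most one degree, then exactness forces $A^i_0=0$, so $A=0$ is totally acyclic; this starts the induction.

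Now let $N\geq 1$. By exactness, $\op{im}(d^i_1)=\ker(d^i_2)\subseteq A^i_1$, so the pair of two-term complexes $D^i_\bullet = \bigl(0\to A^i_0 \xrightarrow{\ \sim\ } \op{im}(d^i_1)\to 0\bigr)$, placed in degrees $0,1$, is a subcomplex-pair of $(A^{-1}_\bullet, A^0_\bullet)$ whose quotient is a bounded exact pair supported in $[1,N]$. The key point is that $A$ restricts to a folding $D$ of $D^i_\bullet$ on the evident subobject $\op{tot}(D)\subseteq \op{tot}(A)$ (column $0$ is all of $A^i_0$, column $1$ is $\op{im}(d^i_1)\subseteq A^i_1$ suitably twisted, higher columns are $0$) and hence descends to a folding $Q$ of the quotient pair, producing a short exact sequence $0\to D\to A\to Q\to 0$ in $\op{Fact}(w)$. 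Everything in this statement is formal except that the ``diagonal'' higher term of $\phi_A$ on column~$1$ must carry $\op{im}(d^i_1)$ into itself; this is forced by the foldability relation of Remark~\ref{remark: Kanye} at the index pair $(1,2)$, which says that this term, post-composed with $d^i_2$, equals up to sign and twist $d^i_2$ pre-composed with a column-$2$ term, and therefore kills $\ker(d^i_2)=\op{im}(d^i_1)$. Granting this: $D$ folds a two-term pair whose ``diagonal'' components $\phi^{-1}_{0,1},\phi^0_{0,1}$ are isomorphisms, so $D$ is contractible (the underlying complex-pair is contractible and one simply folds its contracting homotopy; equivalently, cancel the two isomorphisms to split $D$ into summands in which $\phi^{-1}$, resp.\ $\phi^0$, is invertible), hence $D\in\op{Acycl}(\mathcal A,w)$; and $Q\in\op{Acycl}(\mathcal A,w)$ by the inductive hypothesis (after a degree shift, which alters the folding only by a translation). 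Since $\op{tot}(D\to A\to Q)\in\op{Acycl}(\mathcal A,w)$ by definition, the triangle in $\op{K}(\op{Fact} w)$ relating the totalization of a short exact sequence to the mapping cone of its surjection (as in the proof of Lemma~\ref{lemma: injective, projective fact orthogonal}), together with thickness of $\op{Acycl}(\mathcal A,w)$, forces $A\in\op{Acycl}(\mathcal A,w)$, completing the induction.

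For the second statement, assume $\mathcal A$ has small coproducts and normalize the support of the bounded-below pair to $[0,\infty)$. The smart truncations $\tau_{\leq n}(A^i_\bullet) = \bigl(A^i_0\to\cdots\to A^i_{n-1}\to\ker(d^i_{n+1})\to 0\bigr)$ are bounded exact subcomplex-pairs with $\bigcup_n\tau_{\leq n} = (A^{-1}_\bullet,A^0_\bullet)$, and, exactly as in the proof of Lemma~\ref{lemma: infinite tots are acyclic}, the pair is the cokernel of a monomorphism $\bigoplus_n\tau_{\leq n}\to\bigoplus_n\tau_{\leq n}$. By the same compatibility used in the bounded case, $A$ is then the cokernel of a monomorphism $\bigoplus_n T_n\to\bigoplus_n T_n$ in $\op{Fact}(w)$, where $T_n$ is the folding of $\tau_{\leq n}$ induced by $A$; each $T_n$ is totally acyclic by the bounded case, hence co-acyclic, hence so is $\bigoplus_n T_n$, and the triangle attached to this short exact sequence shows $A$ is co-acyclic. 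The $\prod$-statement is dual: small products, the truncations $\tau_{\geq n}$, and -- in accordance with Lemma~\ref{lemma: infinite tots are acyclic} -- a bounded-above pair.

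The main obstacle is the compatibility asserted in the inductive step: that the tautological subcomplex/quotient decomposition of the underlying pair of complexes is compatible with a folding structure, equivalently that the restriction of $\phi_A$ to the relevant subobject again satisfies the identities of Remark~\ref{remark: Kanye}. This is the only step that is not either a formal triangulated-category manipulation or a routine contractibility check, and it is precisely where the foldability relations are used in an essential rather than bookkeeping way.
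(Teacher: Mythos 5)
Your proof is correct and follows essentially the same route as the paper's: the paper filters $A$ by subfactorizations obtained by truncating the folded complexes (its $\tau_{\leq j}A$), identifies the successive graded pieces as contractible foldings of isomorphisms, and handles the unbounded cases with the identical telescope; your induction peeling a contractible sub-folding $D$ off the bottom is just that filtration read one step at a time. Your explicit verification that the relation of Remark~\ref{remark: Kanye} at the index pair $(1,2)$ forces the diagonal terms to preserve $\op{im}(d_1)$ is precisely the point the paper needs (and states only tersely) for its truncations to be subfactorizations, so making it explicit is a welcome, not a divergent, addition.
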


\begin{proof}
 For those used to derived categories, the idea is quite simple; the cone of the morphism behaves like the sum of the two good truncations of the resolutions, hence, like a complex with no cohomology.  Morally, this complex is then split into short exact sequences.  In the language of factorizations this amounts to constructing the complex as a colimit of totalizations, which is finite when the resolutions are finite.
 
 After replacing $A^{-1}_{\bullet}$ and $A^0_{\bullet}$ by a common appropriate shift, we may assume that both complexes vanish in negative degrees. Let $C_0^{-1}=A_0^{-1}$ and let $C_{j}^{-1}$ be the cokernel of $d^{-1}_j: A^{-1}_{j-1} \to A^{-1}_{j}$. Let $C_{0}^0 = A^0_0$ and let $C^0_j$ be the cokernel of $d^0_j: A^0_{j-1} \to A^0_{j}$. From exactness, $C_{j}^0$ is the kernel of $d^0_{j+1}$ and $C_{j}^{-1}$ is the kernel of $d^{-1}_{j+1}$ and we have exact sequences,
 \begin{align*}
  0 \to C_{j-1}^{-1} \to A_{j-1}^{-1} \overset{d^{-1}_j}{\to} A^{-1}_j \to C_{j}^{-1} \to 0 \\
  0 \to C_{j-1}^{0} \to A_{j-1}^{0} \overset{d^{0}_j}{\to} A^{0}_j \to C_{j}^{0} \to 0.
 \end{align*}
 
 Consider the subfactorization, $\tau_{\leq j} A$, of $A$ given by restricting the components to their good truncations. The factorization, $\tau_{\leq j} A$, has components,
 \begin{align*}
  \tau_{\leq j} A^0 & = \bigoplus_{0 \leq 2l < j} \Phi^{-l}(A^0_{2l}) \oplus \bigoplus_{0 \leq 2l+1 < j} \Phi^{-l}(A^{-1}_{2l+1}) \oplus \begin{cases} \Phi^{-t}(C_{j}^{0}) & j = 2t \\ \Phi^{-t}(C_{j}^{-1}) & j = 2t+1, \end{cases} \\
  \tau_{\leq j} A^{-1} & = \bigoplus_{0 \leq 2l+1 < j} \Phi^{-l-1}(A^0_{2l+1}) \oplus \bigoplus_{0 \leq 2l < j} \Phi^{-l}(A^{-1}_{2l}) \oplus \begin{cases}  \Phi^{-t}(C_{j}^{-1}) & j = 2t \\ \Phi^{-t-1}(C_{j}^{0}) & j = 2t+1, \end{cases}
 \end{align*}
 and morphisms between components induced by those from $A$ using the inclusion $C^j_p \to A^j_{p+1}$. Note that this is a well-defined factorization since $\phi^j_{p,q} = 0$ for $q > p+1$ and $d^j_{p+1}$ vanishes on $C^j_p$. Let $S_i$ denote the factorization with components,
 \begin{align*}
  S_j^0 & = \begin{cases} \Phi^{-t+1}(C_j^{-1}) \oplus \Phi^{-t}(C_j^0) & j = 2t \\ \Phi^{-t}(C_j^{0}) \oplus \Phi^{-t}(C_j^{-1}) & j = 2t + 1 \end{cases} \\
  S_j^{-1} & = \begin{cases} \Phi^{-t}(C_j^{0}) \oplus \Phi^{-t}(C_j^{-1}) & j = 2t \\ \Phi^{-t}(C_j^{-1}) \oplus \Phi^{-t-1}(C_j^{0}) & j = 2t + 1 \end{cases}
 \end{align*}
 and morphisms
 \begin{align*}
  \phi_{S_j}^0 & = \begin{cases}\begin{pmatrix} 0 & w_{\Phi^{-t}(C_j^{-1})} \\  \op{id}_{\Phi^{-t}(C_j^0)} & 0 \end{pmatrix} & j = 2l \\ \begin{pmatrix} 0 & w_{\Phi^{-t-1}(C_j^{0})} \\  \op{id}_{\Phi^{-t}(C_j^{-1})} & 0 \end{pmatrix} & j = 2l + 1 \end{cases} \\
  \phi_{S_j}^{-1} & = \begin{cases} \begin{pmatrix} 0 & w_{\Phi^{-t-1}(C_j^{0})} \\  \op{id}_{\Phi^{-t}(C_j^{-1})} & 0 \end{pmatrix} & j = 2l \\ \begin{pmatrix} 0 & w_{\Phi^{-t-1}(C_j^{-1})} \\  \op{id}_{\Phi^{-t-1}(C_j^0)} & 0 \end{pmatrix} & j = 2l + 1 \end{cases}
 \end{align*}
 Note that $S_j$ is manifestly a null-homotopic factorization. There are short exact sequences,
 \begin{displaymath}
  0 \to \tau_{\leq j} A \to \tau_{\leq j+1} A \to S_{j+1} \to 0,
 \end{displaymath}
 of factorizations. 
 
 Thus, in $\dabsFact{w}$, $\tau_{\leq j} A$ and $\tau_{\leq j+1} A$ are isomorphic for $j \geq 0$. If the resolutions are finite, we see that, since $\tau_{\leq j} A = 0$ for $j >> 0$, $A$ is totally-acyclic.

 In general, the colimit of these morphisms is isomorphic to $A$. As we can write the colimit via the short exact sequence,
 \begin{displaymath}
  0 \to \bigoplus_{j \geq 0} \tau_{\leq j} A \to \bigoplus_{j \geq 0} \tau_{\leq j} A \to \op{colim} \tau_{\leq j} A = A \to 0,
 \end{displaymath}
 we see that $A$ is co-acyclic in general. 
 
 The argument in the situation where $\mathcal A$ has small products is analogous and omitted. 
\end{proof}

\begin{remark} 
 See also \cite[Section 3.2]{Becker}
\end{remark}

\begin{lemma} \label{lemma: cones over foldings are folded}
 Let $A$ be a $\bigoplus$-folding (respectively $\coprod$-folding) of $(A^{-1}_{\bullet},A^0_{\bullet})$ and $B$ be a $\bigoplus$-folding  (respectively $\coprod$-folding) of $(B^{-1}_{\bullet},B^0_{\bullet})$. Let $\eta: A \to B$ be a morphism of factorizations. Let 
 \begin{align*}
  \eta_{2l,2j}^{-1} & : \Phi^{-l}(A_{2l}^{-1}) \to \Phi^{-j}(B_{2j}^{-1}) \\
  \eta_{2l,2j+1}^{-1} & : \Phi^{-l}(A_{2l}^{-1}) \to \Phi^{-j-1}(B_{2j+1}^{0}) \\
  \eta_{2l+1,2j}^{-1} & : \Phi^{-l-1}(A_{2l+1}^{0}) \to \Phi^{-j}(B_{2j}^{-1}) \\
  \eta_{2l+1,2j+1}^{-1} & : \Phi^{-l-1}(A_{2l+1}^{0}) \to \Phi^{-j-1}(B_{2j+1}^{0}) \\
  \eta_{2l,2j}^{0} & : \Phi^{-l}(A_{2l}^{0}) \to \Phi^{-j}(B_{2j}^{0}) \\
  \eta_{2l,2j+1}^{0} & : \Phi^{-l}(A_{2l}^{0}) \to \Phi^{-j}(B_{2j+1}^{-1}) \\
  \eta_{2l+1,2j}^{0} & : \Phi^{-l}(A_{2l+1}^{-1}) \to \Phi^{-j}(B_{2j}^{0}) \\
  \eta_{2l+1,2j+1}^{0} & : \Phi^{-l}(A_{2l+1}^{-1}) \to \Phi^{-j}(B_{2j+1}^{-1})
 \end{align*}
 be the morphisms on the summands of the components of $A$ and $B$ determined by $\eta$. Assume that 
 \begin{equation} \label{equation: vanishing condition}
  \eta_{p,q}^{-1},\eta_{p,q}^{0} = 0 \text{ for } q > p
 \end{equation}
 and 
 \begin{align*}
  \tilde{\eta}^{-1} & := \Phi^p(\eta^{-1}_{2p,2p}),\Phi^p(\eta^0_{2p+1,2p+1}): A^{-1}_{\bullet} \to B^{-1}_{\bullet} \\
  \tilde{\eta}^0 & := \Phi^p(\eta^{0}_{2p,2p}),\Phi^{p+1}(\eta^{-1}_{2p+1,2p+1}): A^{0}_{\bullet} \to B^{0}_{\bullet}
 \end{align*}
 are chain maps. Then, the cone over $\eta$, $\op{Cone}(\eta)$, folds the cones over $\tilde{\eta}^{-1},\tilde{\eta}^0$. 
\end{lemma}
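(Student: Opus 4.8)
The plan is to exhibit $\op{Cone}(\eta) = C(\eta)$ directly as a $\bigoplus$-folding (respectively $\prod$-folding) of the pair of cone complexes $(\op{Cone}(\tilde{\eta}^{-1})_{\bullet}, \op{Cone}(\tilde{\eta}^{0})_{\bullet})$; these are genuine complexes precisely because $\tilde{\eta}^{-1}$ and $\tilde{\eta}^{0}$ are assumed to be chain maps. First I would read off the cone construction: $C(\eta)$ has underlying graded object $A[1] \oplus B$, so that
\[
C(\eta)^{-1} = A^{0} \oplus B^{-1}, \qquad C(\eta)^{0} = \Phi(A^{-1}) \oplus B^{0},
\]
with structure maps $\phi^{-1}_{C(\eta)} = \begin{pmatrix} -\phi^{0}_{A} & 0 \\ \eta^{-1} & \phi^{-1}_{B} \end{pmatrix}$ and $\phi^{0}_{C(\eta)} = \begin{pmatrix} -\Phi(\phi^{-1}_{A}) & 0 \\ \eta^{0} & \phi^{0}_{B} \end{pmatrix}$. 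Since $A$ folds $(A^{-1}_{\bullet}, A^{0}_{\bullet})$, Remark~\ref{remark: Jay-Z} gives that $A[1]$ folds $(A^{-1}[1]_{\bullet}, A^{0}[1]_{\bullet})$; combining this with the fact that $(X^{-1}_{\bullet}, X^{0}_{\bullet}) \mapsto \op{tot}^{\bigoplus}(X_{\bullet})$ is additive in the pair of complexes, the components $C(\eta)^{-1}$ and $C(\eta)^{0}$ reorganize, after the standard reindexing, exactly into $\op{tot}^{\bigoplus}$ of the graded objects underlying $\op{Cone}(\tilde{\eta}^{-1})_{\bullet} = A^{-1}[1]_{\bullet} \oplus B^{-1}_{\bullet}$ and $\op{Cone}(\tilde{\eta}^{0})_{\bullet} = A^{0}[1]_{\bullet} \oplus B^{0}_{\bullet}$. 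This identifies the components of $C(\eta)$ with those of a folding of the cone complexes.

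It then remains to verify the two conditions defining a folding. One of them is free: since $C(\eta)$ is by construction an object of $\op{Fact}(w)$, the identities $\phi^{0}_{C(\eta)} \circ \phi^{-1}_{C(\eta)} = w$ and $\Phi(\phi^{-1}_{C(\eta)}) \circ \phi^{0}_{C(\eta)} = w$ already hold, so nothing needs to be checked there. What must be verified is that (i) $\phi^{-1}_{p,q} = \phi^{0}_{p,q} = 0$ for $q > p+1$ and (ii) the four near-diagonal ($q = p+1$) components coincide with the differentials of $\op{Cone}(\tilde{\eta}^{-1})_{\bullet}$ and $\op{Cone}(\tilde{\eta}^{0})_{\bullet}$. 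Both are read off the block-lower-triangular shape displayed above. The diagonal blocks $-\phi^{0}_{A}$ and $-\Phi(\phi^{-1}_{A})$ are the structure maps of the folding $A[1]$, so by Remark~\ref{remark: Jay-Z} they already satisfy the vanishing and their near-diagonal pieces are the (signed, reindexed) differentials of $A^{-1}_{\bullet}$ and $A^{0}_{\bullet}$, i.e.\ the $A^{-1}[1]$- and $A^{0}[1]$-parts of the two cone differentials; the diagonal blocks $\phi^{-1}_{B}$ and $\phi^{0}_{B}$ contribute, in the same way, the $B^{-1}_{\bullet}$- and $B^{0}_{\bullet}$-parts. The off-diagonal blocks $\eta^{-1}, \eta^{0}$ contribute nothing strictly above the near-diagonal: by hypothesis~\eqref{equation: vanishing condition} one has $\eta_{p,q} = 0$ for $q > p$, and under the $[1]$-shift of the $A$-summand a diagonal piece $\eta_{p,p}$ is moved into a near-diagonal position. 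Those diagonal pieces $\eta_{p,p}$ are, by the hypothesis defining $\tilde{\eta}^{-1}$ and $\tilde{\eta}^{0}$, exactly the $\Phi$-twisted copies of $\tilde{\eta}^{-1}$ and $\tilde{\eta}^{0}$, which are precisely the connecting maps in the two cone differentials. Assembling these three types of contributions block by block reproduces the folding data of $(\op{Cone}(\tilde{\eta}^{-1})_{\bullet}, \op{Cone}(\tilde{\eta}^{0})_{\bullet})$ on the nose, and the $\prod$-folding case is identical after replacing every direct sum by a product.

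The only real content, and the step I expect to be the main obstacle, is the index-and-sign bookkeeping needed to make the identification in the first paragraph precise: one must fix, once and for all, the shift conventions for $[1]$ on $\op{Fact}(w)$ and on complexes over $\mathcal A$, track the reindexing built into the $\bigoplus$- and $\prod$-totalization formulas, and keep the signs in the cone straight, so that the ``$A$-part shifted by $[1]$'' inside $C(\eta)$ lines up summand-for-summand and sign-for-sign with the $A^{-1}[1]$- and $A^{0}[1]$-summands of the totalized cone complexes, and so that the four near-diagonal formulas come out correctly. Once the conventions are pinned down these identifications are forced, so the verification is mechanical; but it is exactly the part that requires care, and it is where a sign error would most easily creep in.
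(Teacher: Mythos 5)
Your proposal is correct and follows essentially the same route as the paper's proof: write the structure maps of $\op{Cone}(\eta)$ in block-lower-triangular form with diagonal blocks $\phi_{A[1]}$ and $\phi_B$ and off-diagonal block $\eta$, then use the foldability of $A$ and $B$ together with the vanishing condition \eqref{equation: vanishing condition} to see that all components with $q>p+1$ vanish and that the $q=p+1$ components assemble into the differentials of the cone complexes over $\tilde{\eta}^{-1}$ and $\tilde{\eta}^{0}$. The paper's proof is exactly this block computation, stated more tersely, and likewise leaves the sign and reindexing bookkeeping implicit.
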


\begin{proof}
 It is clear that the components of $\op{Cone}(\eta)$ are of the correct form to fold the cones over $\tilde{\eta}^{-1},\tilde{\eta}^0$. We check the conditions on the morphisms. The morphisms in the factorization $\op{Cone}(\eta)$ are given by
 \begin{displaymath}
  \phi^{-1}_{\op{Cone}(\eta)} = \begin{pmatrix} \phi^{-1}_{A[1]} & 0 \\ \eta^{-1} & \phi^{-1}_B \end{pmatrix}, \phi^{0}_{\op{Cone}(\eta)} = \begin{pmatrix} \phi^{0}_{A[1]} & 0 \\ \eta^0 & \phi^{0}_B \end{pmatrix}
 \end{displaymath}
 
 The vanishing condition, Equation~\eqref{equation: vanishing condition}, together with the fact that $A$ and $B$ are $\bigoplus$-foldable  (respectively $\coprod$-foldable) implies that the terms $\phi_{p,q}^{-1},\phi^0_{p,q}$ vanish for $q > p+1$ in $\op{Cone}(\eta)$ while 
 \begin{displaymath}
  \phi^{-1}_{p,p+1} = \begin{pmatrix} (\phi^{-1}_{A[1]})_{p,p+1} & 0 \\ \eta^{-1}_{p,p} & (\phi^{-1}_B)_{p,p+1} \end{pmatrix}, \phi^{0}_{p,p+1} = \begin{pmatrix} (\phi^{0}_{A[1]})_{p,p+1} & 0 \\ \eta^0_{p,p} & (\phi^{0}_B)_{p,p+1} \end{pmatrix}.
 \end{displaymath}
 We see that this is of the appropriate form.
\end{proof}

Now we assume that $\mathcal A$ has enough injectives. Let $E$ be a factorization. Choose injective resolutions 
\begin{center}
 \begin{tikzpicture}[description/.style={fill=white,inner sep=2pt}]
 \matrix (m) [matrix of math nodes, row sep=1em, column sep=2.5em, text height=1.5ex, text depth=0.25ex]
 {  0 & E^{-1} & I^{-1}_0 & I^{-1}_1 & \cdots \\ 
    0 & E^0 & I_0^0 & I_1^0 & \cdots. \\
    };
 \path[->,font=\scriptsize]

  (m-1-1) edge (m-1-2) 
  (m-1-2) edge node[above] {$d_0^{-1}$}(m-1-3)
  (m-1-3) edge node[above] {$d_1^{-1}$}(m-1-4)
  (m-1-4) edge node[above] {$d_2^{-1}$}(m-1-5)
  
  (m-2-1) edge (m-2-2) 
  (m-2-2) edge node[above] {$d_0^0$}(m-2-3)
  (m-2-3) edge node[above] {$d_1^0$}(m-2-4)
  (m-2-4) edge node[above] {$d_2^0$}(m-2-5) 
 ;
 \end{tikzpicture} 
\end{center}

We may also choose lifts of $\phi^{-1}_E$ and $\phi^0_E$ to the specified injective resolutions. Such choices, of course, always exist. However, for certain applications, we will need to work with specific choices of such lifts. As such, it is useful to specify choices of lifts in advance,
\begin{center}
 \begin{tikzpicture}[description/.style={fill=white,inner sep=2pt}]
 \matrix (m) [matrix of math nodes, row sep=2em, column sep=2.5em, text height=1.5ex, text depth=0.25ex]
 {  0 & \Phi^{-1}(E^0) & \Phi^{-1}(I_0^0) & \Phi^{-1}(I_1^0) & \cdots \\
    0 & E^{-1} & I^{-1}_0 & I^{-1}_1 & \cdots \\ 
    0 & E^0 & I_0^0 & I_1^0 & \cdots. \\
    };
 \path[->,font=\scriptsize]
  (m-1-1) edge (m-1-2) 
  (m-1-2) edge node[above] {$\Phi^{-1}(d_0^0)$}(m-1-3)
  (m-1-3) edge node[above] {$\Phi^{-1}(d_1^0)$}(m-1-4)
  (m-1-4) edge node[above] {$\Phi^{-1}(d_2^0)$}(m-1-5) 

  (m-2-1) edge (m-2-2) 
  (m-2-2) edge node[above] {$d_0^{-1}$}(m-2-3)
  (m-2-3) edge node[above] {$d_1^{-1}$}(m-2-4)
  (m-2-4) edge node[above] {$d_2^{-1}$}(m-2-5)
  
  (m-3-1) edge (m-3-2) 
  (m-3-2) edge node[above] {$d_0^0$}(m-3-3)
  (m-3-3) edge node[above] {$d_1^0$}(m-3-4)
  (m-3-4) edge node[above] {$d_2^0$}(m-3-5) 
  
  (m-1-2) edge node[left] {$\phi^{-1}_E$}(m-2-2)
  (m-1-3) edge node[left] {$\phi^{-1}_0$}(m-2-3)
  (m-1-4) edge node[left] {$\phi^{-1}_1$}(m-2-4)
  
  (m-2-2) edge node[left] {$\phi^{0}_E$}(m-3-2)
  (m-2-3) edge node[left] {$\phi^{0}_0$}(m-3-3)
  (m-2-4) edge node[left] {$\phi^{0}_1$}(m-3-4)
 ;
 \end{tikzpicture} 
\end{center}

Since $\phi^{0}_E \circ \phi^{-1}_E = w$ and $\Phi(\phi^{-1}_E) \circ \phi^0_E = w$, the compositions of the lifts to the injective resolutions are homotopic to $w$. It will also be useful to specify the homotopies beforehand.
 \begin{center}
 \begin{tikzpicture}[description/.style={fill=white,inner sep=2pt}]
 \matrix (m) [matrix of math nodes, row sep=3.5em, column sep=3.5em, text height=1.5ex, text depth=0.25ex]
 {  0 & \Phi^{-1}(E^0) & \Phi^{-1}(I_0^0) & \Phi^{-1}(I_1^0) & \Phi^{-1}(I_2^0) & \cdots \\
    0 & E^{0} & I^{0}_0 & I^{0}_1 & I_2^0 & \cdots \\ };
 \path[->,font=\scriptsize]
  (m-1-1) edge (m-1-2) 
  (m-1-2) edge node[above] {$\Phi^{-1}(d_0^0)$}(m-1-3)
  (m-1-3) edge node[above] {$\Phi^{-1}(d_1^0)$}(m-1-4)
  (m-1-4) edge node[above] {$\Phi^{-1}(d_2^0)$}(m-1-5)
  (m-1-5) edge node[above] {$\Phi^{-1}(d_3^0)$}(m-1-6) 

  (m-2-1) edge (m-2-2) 
  (m-2-2) edge node[below] {$d_0^{0}$}(m-2-3)
  (m-2-3) edge node[below] {$d_1^{0}$}(m-2-4)
  (m-2-4) edge node[below] {$d_2^{0}$}(m-2-5)
  (m-2-5) edge node[below] {$d_3^0$}(m-2-6) 
  
  (m-1-2) edge node[left] {$0$}(m-2-2)
  (m-1-3) edge node[left] {$\beta_0^0$}(m-2-3)
  (m-1-4) edge node[left] {$\beta_1^0$}(m-2-4)
  (m-1-5) edge node[left] {$\beta_2^0$}(m-2-5)
  
  (m-1-4) edge node[below] {$h^0_0$}(m-2-3)
  (m-1-5) edge node[below] {$h^0_1$}(m-2-4)
  (m-1-6) edge node[below] {$h^0_2$}(m-2-5)
 ;
 \end{tikzpicture} 
 \end{center}
 where $\beta_i^0 = w_{\Phi^{-1}(I^0_j)} - \phi_j^0 \circ \phi^{-1}_j$, and
 \begin{center}
 \begin{tikzpicture}[description/.style={fill=white,inner sep=2pt}]
 \matrix (m) [matrix of math nodes, row sep=3.5em, column sep=3.5em, text height=1.5ex, text depth=0.25ex]
 {  0 & E^{-1} & I^{-1}_0 & I^{-1}_1 & I_2^{-1} &  \cdots \\
    0 & \Phi(E^{-1}) & \Phi(I_0^{-1}) & \Phi(I_1^{-1}) & \Phi(I_2^{-1}) & \cdots. \\
    };
 \path[->,font=\scriptsize]
  (m-1-1) edge (m-1-2) 
  (m-1-2) edge node[above] {$d_0^{-1}$}(m-1-3)
  (m-1-3) edge node[above] {$d_1^{-1}$}(m-1-4)
  (m-1-4) edge node[above] {$d_2^{-1}$}(m-1-5)
  (m-1-5) edge node[above] {$d_3^{-1}$}(m-1-6) 

  (m-2-1) edge (m-2-2) 
  (m-2-2) edge node[below] {$\Phi(d_0^{-1})$}(m-2-3)
  (m-2-3) edge node[below] {$\Phi(d_1^{-1})$}(m-2-4)
  (m-2-4) edge node[below] {$\Phi(d_2^{-1})$}(m-2-5)
  (m-2-5) edge node[below] {$\Phi(d_3^{-1})$}(m-2-6) 
  
  (m-1-2) edge node[left] {$0$}(m-2-2)
  (m-1-3) edge node[left] {$\beta^{-1}_0$}(m-2-3)
  (m-1-4) edge node[left] {$\beta^{-1}_1$}(m-2-4)
  (m-1-5) edge node[left] {$\beta^{-1}_2$}(m-2-5)
  
  (m-1-4) edge node[below] {$h^{-1}_0$}(m-2-3)
  (m-1-5) edge node[below] {$h^{-1}_1$}(m-2-4)
  (m-1-6) edge node[below] {$h^{-1}_2$}(m-2-5)
 ;
 \end{tikzpicture} 
 \end{center}
 where $\beta_i^{-1} = w_{I^{-1}_j} - \Phi(\phi_j^{-1}) \circ \phi^{0}_j$.

Now, we state our construction of injective resolutions.

\begin{theorem} \label{theorem: strictification injective}
 Assume that small coproducts exist in $\mathcal A$ and that $\mathcal A$ has enough injective objects. 

 Let $E$ be an object of $\op{Fact}(w)$. Choose injective resolutions of its components, lifts of $\phi^{-1}_E$ and $\phi^0_E$ to these injective resolutions, and null-homotopies of the difference of $w$ and the compositions of the lifts as above. 

 There exists a $\bigoplus$-folding, $I = (\op{tot}^{\bigoplus}(I_{\bullet})^{-1},\op{tot}^{\bigoplus}(I_{\bullet})^0,\phi^{-1}_I,\phi^0_I)$, of $I^{-1}_{\bullet}$ and $I^0_{\bullet}$ and a co-quasi-isomorphism, $d_0: E \to I$, such that
 \begin{itemize}
  \item We have equalities
  \begin{align*}
   \phi^{-1}_{2l+1,2l+1}& =  \Phi^{-l-1}(\phi^{0}_{2l+1})  : \Phi^{-l-1}(I^{-1}_{2l+1}) \to \Phi^{-l-1}(I^{0}_{2l+1}) \\
   \phi^{-1}_{2l,2l}   & =  \Phi^{-l}(\phi^{-1}_{2l}) : \Phi^{-l-1}(I^{0}_{2l}) \to \Phi^{-l}(I^{-1}_{2l}) \\
   \phi^{0}_{2l+1,2l+1} & =   \Phi^{-l}(\phi^{-1}_{2l+1}): \Phi^{-l-1}(I^{0}_{2l+1}) \to \Phi^{-l}(I^{-1}_{2l+1}) \\
   \phi^{0}_{2l,2l}   & =  \Phi^{-l}(\phi^{0}_{2l}): \Phi^{-l}(I^{-1}_{2l}) \to \Phi^{-l}(I^{0}_{2l}),
  \end{align*}
  and 
  \begin{align*}
   \phi^{-1}_{2l+1,2l}   & =  \Phi^{-l-1}(h^{-1}_{2l}) : \Phi^{-l-1}(I^{-1}_{2l+1}) \to \Phi^{-l}(I^{-1}_{2l}) \\
   \phi^{-1}_{2l,2l-1}  & =  -\Phi^{-l}(h^{0}_{2l-1}): \Phi^{-l-1}(I^{0}_{2l}) \to \Phi^{-l}(I^{0}_{2l-1}) \\
   \phi^{0}_{2l+1,2l} & =    -\Phi^{-l}(h^{0}_{2l})  : \Phi^{-l-1}(I^{0}_{2l+1}) \to \Phi^{-l}(I^{0}_{2l}) \\
   \phi^{0}_{2l,2l-1} & =   \Phi^{-l}(h^{-1}_{2l-1}) : \Phi^{-l}(I^{-1}_{2l}) \to \Phi^{-l+1}(I^{-1}_{2l-1}).
  \end{align*}
  \item $d_0$ is given by the compositions,
  \begin{align*}
   E^{-1} & \overset{d^{-1}_0}{\to} I_0^{-1} \to I^{-1} \\
   E^{0} & \overset{d^{0}_0}{\to} I_0^{0} \to I^{0}.
  \end{align*}
  \item $d_0$ is a quasi-isomorphism when both injective resolutions are finite.
 \end{itemize}
\end{theorem}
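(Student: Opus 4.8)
The plan is to exhibit $I$ as an explicit $\bigoplus$-folding of the two chosen injective resolutions, to check that $d_0$ is a morphism of factorizations, and then to deduce that $d_0$ is a co-quasi-isomorphism by showing that $\op{Cone}(d_0)$ is a $\bigoplus$-folding of a pair of bounded-below exact complexes, so that Lemma~\ref{lemma: cones over foldings are folded} and Lemma~\ref{lemma: acyclic folds to acyclic} apply. First I would prescribe $\phi^{-1}_I$ and $\phi^0_I$ along the three ``principal bands'': on $\{q=p+1\}$ they are the signed, $\Phi$-twisted resolution differentials forced by the definition of a $\bigoplus$-folding; on $\{q=p\}$ they are the $\Phi$-twists of the chosen lifts $\phi^{\pm1}_j$; and on $\{q=p-1\}$ they are $\mp\Phi^{-l}(h^{\pm1}_j)$, exactly as in the statement. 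Since both resolutions are bounded below, Remark~\ref{remark: Kanye} reduces the identities $\phi^0_I\circ\phi^{-1}_I=w_{\Phi^{-1}(I^0)}$ and $\Phi(\phi^{-1}_I)\circ\phi^0_I=w_{I^{-1}}$ to a list of relations among the $\phi_{p,q}$, one for each $(p,q)$, each a finite sum. The relations with $|p-q|=2$ hold because $d\circ d=0$; those with $q=p+1$, and the off-diagonal contributions to $q=p$, hold because the $\phi^{\pm1}_j$ are chain maps; and the $q=p$ relations hold because $h^{\pm1}_j$ is a null-homotopy of $\beta^{\pm1}_j=w-\phi^0_j\phi^{-1}_j$ (resp.\ $w-\Phi(\phi^{-1}_j)\phi^0_j$).

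The main obstacle is that, with only three bands present, the relations with $q\le p-1$ will not in general hold, so one must add further ``lower-band'' components $\phi_{p,q}$ with $q\le p-2$. I would argue that these can be constructed recursively. Naturality of $w$ yields the identity $\beta^{-1}_\bullet\circ\widetilde\phi^{-1}=\Phi(\widetilde\phi^{-1})\circ\beta^0_\bullet$ of chain maps, which forces the $\{q=p-1\}$-discrepancy of $\phi^0_I\circ\phi^{-1}_I$ to be a $\Phi$-twist of a degree $-1$ chain map $\Phi^{-1}(I^0_\bullet)\to\Phi(I^{-1}_\bullet)$. Such a map represents a class in the negative Ext group $\op{Ext}^{-1}_{\mathcal A}(\Phi^{-1}(E^0),\Phi(E^{-1}))=0$, and since $\Phi(I^{-1}_\bullet)$ is a bounded-below complex of injectives, hence $K$-injective, the map is null-homotopic; a choice of null-homotopy is precisely the datum of the band $\{q=p-2\}$, and it cancels the $\{q=p-1\}$-discrepancy. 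Inductively, the discrepancy on $\{q=p-k\}$ is a degree $-k$ chain map representing a class in a negative Ext group, which vanishes for $k\ge1$, and whose null-homotopy defines the band $\{q=p-k-1\}$; the same analysis with $\widetilde\phi^0$ in place of $\widetilde\phi^{-1}$ handles the second factorization identity. When both resolutions are finite only finitely many bands are nonzero so the process terminates; in general each relation of Remark~\ref{remark: Kanye} is still a finite sum, so $\phi^{\pm1}_I$ is well defined and $I$ is a genuine $\bigoplus$-folding of $I^{-1}_\bullet$ and $I^0_\bullet$. The sign- and $\Phi$-bookkeeping needed to match these relations is the lengthy-but-routine part; the Ext-vanishing / $K$-injectivity step is the only conceptual point.

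For the remaining assertions, define $d_0\colon E\to I$ by $E^{-1}\overset{d^{-1}_0}{\to}I^{-1}_0\hookrightarrow I^{-1}$ and $E^0\overset{d^0_0}{\to}I^0_0\hookrightarrow I^0$. On the $p=0$ summand the two commutativity conditions for a morphism of factorizations are exactly $\phi^{-1}_0\circ\Phi^{-1}(d^0_0)=d^{-1}_0\circ\phi^{-1}_E$ and $\phi^0_0\circ d^{-1}_0=d^0_0\circ\phi^0_E$, which say that the chosen lifts are lifts, while on the higher summands commutativity is automatic because $d_0$ meets only the $p=0$ summand and $d^{-1}_1 d^{-1}_0=d^0_1 d^0_0=0$; so $d_0$ lies in $\op{Fact}(w)$. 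Now $E$ is itself a $\bigoplus$-folding of the one-term complexes $(E^{-1},E^0)$ in degree $0$, and $d_0$ satisfies the vanishing condition~\eqref{equation: vanishing condition} with associated diagonal chain maps $\widetilde{d_0}^{\,\pm1}$ equal to the augmentations $E^{\pm1}\to I^{\pm1}_\bullet$; hence Lemma~\ref{lemma: cones over foldings are folded} shows that $\op{Cone}(d_0)$ is a $\bigoplus$-folding of $\op{Cone}(E^{-1}\to I^{-1}_\bullet)$ and $\op{Cone}(E^0\to I^0_\bullet)$. Since each $I^{\pm1}_\bullet$ is an injective resolution of $E^{\pm1}$, these mapping cones are exact complexes, bounded below, so by Lemma~\ref{lemma: acyclic folds to acyclic} the factorization $\op{Cone}(d_0)$ is co-acyclic; thus $d_0$ becomes an isomorphism in $\dcoFact{w}$, i.e.\ a co-quasi-isomorphism. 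If both resolutions are finite the mapping cones are bounded exact complexes, so by the first assertion of Lemma~\ref{lemma: acyclic folds to acyclic} $\op{Cone}(d_0)$ is totally acyclic, hence zero in $\dabsFact{w}$ and $d_0$ is a quasi-isomorphism. This establishes the three bulleted properties, the first being the defining formulas from the construction, the second the definition of $d_0$, and the third the finite case just treated.
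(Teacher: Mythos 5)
Your proposal is correct and follows essentially the same route as the paper: the bands $\phi_{p,q}$ with $q\le p-2$ are produced by descending induction, at each stage choosing a null-homotopy of the accumulated discrepancy, which is a chain map between bounded-below complexes of injectives inducing zero on homology, and the co-quasi-isomorphism claim is then deduced from the folding lemmas applied to $\op{Cone}(d_0)$. The only points you compress — verifying via the induction hypothesis that the discrepancy really is a chain map, and the decoupling of the two factorization identities into even/odd parity systems each solved by a single null-homotopy — are exactly the computations the paper writes out, and you have correctly identified them as the routine part.
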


\begin{proof}
 We will construct $\phi^{-1}_{p,q}$ and $\phi^0_{p,q}$ such that
 \begin{equation} \label{equation: -1}
  \sum_{t \in \Z} \phi^0_{t,p+n} \circ \phi^{-1}_{p,t} =
 \begin{cases}
  0 & n \not = 0 \\
  w & n=0
 \end{cases}
 \end{equation}
 and
 \begin{equation} \label{equation: 0}
  \sum_{t \in \Z} \Phi(\phi^{-1}_{t,p+n}) \circ \phi^{0}_{p,t} =
 \begin{cases}
  0 & n \not = 0 \\
  w & n=0.
 \end{cases}
 \end{equation}
 We will proceed by downward induction on $n$. We begin by defining $\phi^{-1}_{p,q}$ and $\phi^0_{p,q}$ for $p-1 \leq q$ exactly as in the conclusions of the theorem. This satisfies the cases, $n=2,1,0$, of Equations~\eqref{equation: -1} and \eqref{equation: 0}. 
 
 Now assume we have constructed $\phi^{-1}_{p,q}$ and $\phi^0_{p,q}$ for $q \geq p-m$ satisfying Equations \eqref{equation: -1} and \eqref{equation: 0} for $n \geq -m+1$. We need to construct $\phi^{-1}_{s,s-m-1}$ and $\phi^{0}_{s,s-m-1}$ such that  
 \begin{equation} \label{equation: induction -1}
  \sum_{p-m \leq t \leq p} \phi^0_{t,p-m} \circ \phi^{-1}_{p,t} + \phi^0_{p-m-1,p-m} \circ \phi^{-1}_{p,p-m-1} + \phi^0_{p+1,p-m} \circ \phi^{-1}_{p,p+1} = 0
 \end{equation}
 and
 \begin{equation} \label{equation: induction 0}
  \sum_{p-m \leq t \leq p} \Phi(\phi^{-1}_{t,p-m}) \circ \phi^{0}_{p,t} + \Phi(\phi^{-1}_{p-m-1,p-m}) \circ \phi^{0}_{p,p-m-1} + \Phi(\phi^{-1}_{p+1,p-m}) \circ \phi^{0}_{p,p+1} = 0.
 \end{equation}
 We will see that solving Equation \eqref{equation: induction -1} and \eqref{equation: induction 0} amounts to choosing a null-homotopy for an acyclic chain map between complexes of injectives. Solving Equation \eqref{equation: induction -1} for $p$ even and Equation \eqref{equation: induction 0} for $p$ odd is independent from solving Equation \eqref{equation: induction -1} for $p$ odd and Equation \eqref{equation: induction 0} for $p$ even. We will solve Equation \eqref{equation: induction -1} for $p$ even and Equation \eqref{equation: induction 0} for $p$ odd. The other case is completely analogous. 
 
 Assume that $m=2r$. The case of odd $m$ follows analogously. Consider the chain complexes of injectives, $(\Phi^{-u-1}(I^0_v),\Phi^{-u-1}(d^0_v))$ and $(\Phi^{-u-r}(I^0_{v-m}),\Phi^{-u-r}(d^0_{v-m}))$. Each complex contains homology in a single degree, $0$ for $(\Phi^{-u-1}(I^0_v),\Phi^{-u-1}(d^0_v))$ and $m$ for $(\Phi^{-u-r}(I^0_{v-m}),\Phi^{-u-r}(d^0_{v-m}))$. 
 
 There are morphisms,
 \begin{align*}
  \psi_{u,2q}:= \Phi^{q-u}(\sum_{2q-m < t \leq 2q} \phi^0_{t,2q-m+1} \circ \phi^{-1}_{2q,t}) & : \Phi^{-u-1}(I^0_{2q}) \to \Phi^{-u-r}(I^0_{2q-m}) \\
  \psi_{u,2q+1}:= \Phi^{q-u}(\sum_{2q+1-m < t \leq 2q+1} \Phi(\phi^{-1}_{t,2q-m+1}) \circ \phi^{0}_{2q+1,t}) & : \Phi^{-u-1}(I^0_{2q+1}) \to \Phi^{-u-r}(I^0_{2q-m+1}).
 \end{align*}
 We claim that $\psi_u: (\Phi^{-u-1}(I^0_v),\Phi^{-u-1}(d^0_v)) \to (\Phi^{-u-r}(I^0_{v-m}),\Phi^{-u-r}(d^0_{v-m}))$ is a chain map. Let us assume the validity of this claim for the moment and continue. Since $\psi_u$ must induce the trivial map on the homology of the complexes and the components of the complexes are injectives, there exists a null-homotopy,
 \begin{displaymath}
  h_{u,v}: \Phi^{-u-1}(I^0_v) \to \Phi^{-u-r}(I^0_{v-m-1}),
 \end{displaymath}
 of $\psi_u$. Let us draw the diagram for the homotopy. Recall that $\phi^{-1}_{2q,2q+1} = -\Phi^{-q-1}(d_{2q+1}^0)$ and $\phi^{0}_{2q-1,2q} = -\Phi^{-q}(d_{2q}^0)$.
 \begin{center}
 \begin{tikzpicture}[description/.style={fill=white,inner sep=2pt}]
 \matrix (m) [matrix of math nodes, row sep=4.5em, column sep=4.5em, text height=1ex, text depth=0.20ex]
 {  \Phi^{-u-1}(I^0_{2q-1}) & \Phi^{-u-1}(I^0_{2q}) & \Phi^{-u-1}(I^0_{2q+1}) & \Phi^{-u-1}(I^0_{2q+2}) \\
    \Phi^{-u-r}(I^0_{2q-m-1}) & \Phi^{-u-r}(I^0_{2q-m}) & \Phi^{-u-r}(I^0_{2q+m+1}) & \Phi^{-u-r}(I^0_{2q+m+2}) \\
    };
 \path[->,font=\scriptsize]
  (m-1-1) edge (m-1-2) 
  (m-1-2) edge node[above] {$-\Phi^{q-u}(\phi^{-1}_{2q,2q+1})$}(m-1-3)
  (m-1-3) edge node[above] {$-\Phi^{q-u}(\phi^0_{2q+1,2q+2})$}(m-1-4)

  (m-2-1) edge node[below] {$-\Phi^{q-u}(\phi^0_{2q-m-1,2q-m})$}(m-2-2) 
  (m-2-2) edge node[below] {$-\Phi^{q-u+1}(\phi^{-1}_{2q-m,2q-m+1})$}(m-2-3)
  (m-2-3) edge (m-2-4)
  
  (m-1-2) edge node[left] {$\psi_{u,2q}$}(m-2-2)
  (m-1-3) edge node[left] {$\psi_{u,2q+1}$}(m-2-3)
  
  (m-1-2) edge node[above] {$h_{u,2q}$}(m-2-1)
  (m-1-3) edge node[above] {$h_{u,2q+1}$}(m-2-2)
  (m-1-4) edge node[above] {$h_{u,2q+2}$}(m-2-3)
 ;
 \end{tikzpicture} 
 \end{center}

 We can rewrite the equations for the homotopy,
 \begin{align*}
  \psi_{u,2q} & = - h_{u,2q+1} \circ \Phi^{q-u}(\phi^{-1}_{2q,2q+1}) - \Phi^{q-u}(\phi^0_{2q-m-1,2q-m}) \circ h_{u,2q} \\
  \psi_{u,2q+1} & = - h_{u,2q+2} \circ \Phi^{q-u}(\phi^0_{2q+1,2q+2}) -\Phi^{q-u+1}(\phi^{-1}_{2q-m,2q-m+1}) \circ h_{u,2q+1},
 \end{align*}
 as 
 \begin{gather*}
  \sum_{2q-m < t \leq 2q} \phi^0_{t,2q-m+1} \circ \phi^{-1}_{2q,t} + \Phi^{u-q}(h_{q,2q+1}) \circ \phi^{-1}_{2q,2q+1} + \phi^0_{2q-m-1,2q-m} \circ \Phi^{u-q}(h_{u,2q}) = 0  \\
  \sum_{2q-m+1 < t \leq 2q+1} \Phi(\phi^{-1}_{t,2q-m+1}) \circ \phi^{0}_{2q+1,t} + \Phi(\phi^{-1}_{2q-m,2q-m+1}) \circ \Phi^{u-q}(h_{u,2q+1}) \\ + \Phi^{u-q+1}(h_{u,2q+2}) \circ \phi^{0}_{2q+1,2q+2} = 0.
 \end{gather*}
 We then set
 \begin{align*}
  \phi^0_{2q+1,2q-m} &:= \Phi^{u-q}(h_{u,2q+1}) \\
  \phi^{-1}_{2q,2q-m-1} &:= \Phi^{u-q}(h_{u,2q})
 \end{align*}
 to solve Equation \eqref{equation: induction -1} for $p=2q$ and Equation \eqref{equation: induction 0} for $p=2q+1$.
 
 Thus, we have constructed $\phi^{-1}_{s,s-m-1}$ and $\phi^0_{s,s-m-1}$ completing the induction step if we can show that $\psi_u$ is a chain map. We check the commutativity of the square,
 \begin{center}
 \begin{tikzpicture}[description/.style={fill=white,inner sep=2pt}]
 \matrix (m) [matrix of math nodes, row sep=3.5em, column sep=6.5em, text height=1.5ex, text depth=0.25ex]
 {  \Phi^{-u-1}(I^0_{2q}) & \Phi^{-u-1}(I^0_{2q+1}) \\
    \Phi^{-u-r}(I^0_{2q-m}) & \Phi^{-u-r}(I^0_{2q+m+1}), \\
    };
 \path[->,font=\scriptsize]
  (m-1-1) edge node[above] {$-\Phi^{q-u}(\phi^{-1}_{2q,2q+1})$}(m-1-2)
 
  (m-2-1) edge node[below] {$-\Phi^{q-u+1}(\phi^{-1}_{2q-m,2q-m+1})$}(m-2-2)
  
  (m-1-1) edge node[left] {$\psi_{u,2q}$}(m-2-1)
  (m-1-2) edge node[left] {$\psi_{u,2q+1}$}(m-2-2)
 ;
 \end{tikzpicture} 
 \end{center} 
 as the other squares are handled similarly. Commutativity of the above square is equivalent to the equality,
 \begin{gather} \label{equation: chain map}
  \left(\sum_{2q-m+1 \leq t \leq 2q+1} \Phi(\phi^{-1}_{t,2q-m+1}) \circ \phi^0_{2q+1,t}\right) \circ \phi^{-1}_{2q,2q+1} = \Phi(\phi^{-1}_{2q-m,2q+m-1}) \circ \left(\sum_{2q-m \leq t \leq 2q} \phi^0_{t,2q-m} \circ \phi^{-1}_{2q,t} \right).
 \end{gather}
 From the induction hypothesis, for $2q-m+1 \leq t \leq 2q+1$, we have 
 \begin{displaymath}
  \phi^0_{2q+1,t} \circ \phi^{-1}_{2q,2q+1} = - \sum_{t-1 \leq s \leq 2q} \phi^0_{s,t} \circ \phi^{-1}_{2q,s},
 \end{displaymath}
 and, for $2q-m \leq t \leq 2q$,
 \begin{displaymath}
  \Phi(\phi^{-1}_{2q-m,2q+m-1}) \circ \phi^0_{t,2q-m} = - \sum_{2q-m-1 \leq s \leq t+1} \Phi(\phi^{-1}_{s,2q+m-1}) \circ \phi^0_{t,s}.
 \end{displaymath}
 Thus, both sides of Equation \eqref{equation: chain map} are equal to
 \begin{displaymath}
  -\sum_{2q-m-1 \leq s \leq t+1 \leq 2q} \Phi(\phi^{-1}_{s,2q+m-1}) \circ \phi^0_{t,s} \circ \phi^{-1}_{2q,t}.
 \end{displaymath}
 This finishes the construction of the factorization, $(I^{-1},I^0,\phi^{-1}_I,\phi^0_I)$. 
 
 We turn to checking that $d_0$, as defined in the conclusion of the theorem, is a morphism of factorizations. By the construction of $\phi^{-1}_I$ and $\phi^0_I$, to check that the bolded squares in 
 \begin{center}
 \begin{tikzpicture}[description/.style={fill=white,inner sep=2pt}]
 \matrix (m) [matrix of math nodes, row sep=3.5em, column sep=6em, text height=1.5ex, text depth=0.25ex]
 {  \Phi^{-1}(E^0) & E^{-1} & E^{0} \\
    \Phi^{-1}(I^0_0) & I^{-1}_0 \oplus \Phi^{-1}(I^{0}_1) & I^{0}_0 \oplus I^{-1}_1 \oplus \Phi^{-1}(I^0_2) \\
    \Phi^{-1}(I^0_0) & I^{-1}_0 & I^{0}_0 \\
    };
 \path[->,font=\scriptsize]
  (m-1-1) edge[thick] node[above] {$\phi^{-1}_E$}(m-1-2)
  (m-1-2) edge[thick] node[above] {$\phi^{0}_E$}(m-1-3)
  (m-2-1) edge node[above] {$\begin{pmatrix} \phi^{-1}_0 & \Phi^{-1}(d_0^{-1}) \end{pmatrix}$}(m-2-2)
  (m-2-2) edge node[above] {$\begin{pmatrix} \phi^{0}_0 & h_0^{0} \\ d^{-1}_1 & \phi^{-1}_1 \\ 0 & \Phi^{-1}(d_2^0)  \end{pmatrix}$}(m-2-3)
  (m-3-1) edge[thick] node[above] {$\phi^{-1}_I$}(m-3-2)
  (m-3-2) edge[thick] node[above] {$\phi^{0}_I$}(m-3-3)
  
  (m-1-1) edge[thick] node[left] {$\Phi^{-1}(d_0^0)$}(m-2-1)
  (m-1-2) edge[thick] node[left] {$\begin{pmatrix} d^{-1}_0 & 0 \end{pmatrix}$}(m-2-2)
  (m-1-3) edge[thick] node[right] {$\begin{pmatrix} d^{0}_0 & 0 & 0 \end{pmatrix}$}(m-2-3)
  (m-2-1) edge[thick] (m-3-1)
  (m-2-2) edge[thick] (m-3-2)
  (m-2-3) edge[thick] (m-3-3)
 ;
 \end{tikzpicture} 
 \end{center} 
 commute, it suffices to show that the upper squares commute. This is immediate.

 Finally, we demonstrate that the cone of $d_0$ is co-acyclic. The factorization $E$ folds the trivial complexes $(E^{-1},E^0)$ while $I$ folds $(I^{-1}_{\bullet},I^0_{\bullet})$. The cone over $d_0$ folds the cones of the morphisms of the chain complexes $d_0^{-1}: E^{-1} \to I^{-1}_{\bullet}$ and $d_0^0: E^0 \to I^0_{\bullet}$. Each of these complexes is bounded below and acyclic thus we may apply Lemma~\ref{lemma: acyclic folds to acyclic} to conclude that the cone of $d_0$ is co-acylic and the cone of $d_0$ is acyclic if the chosen injective resolutions are bounded.
\end{proof}

There is a special situation where the components, $\phi^{-1}_{p,q}$ and $\phi^0_{p,q}$, vanish for $q < p-1$. 

\begin{corollary} \label{corollary: easier construction}
 Assume that
 \begin{align*}
  h^{-1}_p \circ \phi^{-1}_{p+1} = \Phi(\phi^{-1}_p) \circ h^0_{p} \\
  \Phi(h^{0}_p) \circ \phi^{0}_{p+1} = \Phi(\phi^{0}_p) \circ h^{-1}_{p} \\
  \Phi(h^{-1}_{p-1}) \circ h^{-1}_p = 0 \\
  \Phi(h^{0}_{p-1}) \circ h^{0}_p = 0.
 \end{align*}
 Then, in the factorization constructed in Theorem \ref{theorem: strictification injective}, we may take 
 \begin{displaymath}
  \phi^{-1}_{p,q} = \phi^0_{p,q} = 0
 \end{displaymath}
 for $q < p-1$.
\end{corollary}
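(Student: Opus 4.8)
The plan is to check directly that the assignment in the corollary yields a valid $\bigoplus$-folding, equivalently that it can be produced from the inductive construction in the proof of Theorem~\ref{theorem: strictification injective} by choosing, at every step, the \emph{zero} null-homotopy of the chain map $\psi_u$. Concretely, I would keep the components $\phi^{-1}_{p,q},\phi^0_{p,q}$ with $q\geq p-1$ as prescribed in the conclusion of Theorem~\ref{theorem: strictification injective} and set $\phi^{-1}_{p,q}=\phi^0_{p,q}=0$ for $q<p-1$. By Remark~\ref{remark: Kanye}, it then suffices to verify that Equations~\eqref{equation: -1} and~\eqref{equation: 0} hold for every $n\in\Z$ with these components. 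Granting this, the morphism $d_0\colon E\to I$ and the co-acyclicity of its cone are exactly as in Theorem~\ref{theorem: strictification injective}, since those parts of the argument only involve components with $q\geq p-1$.

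The verification splits according to the value of $n$. For $n\geq 0$ a support check shows the relevant sums involve only components with $q\geq p-1$: $\phi^{-1}_{p,t}\neq 0$ forces $t\in\{p-1,p,p+1\}$, and then $\phi^0_{t,p+n}\neq 0$ forces $t\geq p+n-1\geq p-1$; so these cases are literally the ones already checked in the proof of Theorem~\ref{theorem: strictification injective} and are unaffected by setting the deeper components to zero (for $n\geq 3$ both sides are empty). For $n\leq -3$ the sums are empty as well: $\phi^{-1}_{p,t}\neq 0$ forces $t\geq p-1$, while $\phi^0_{t,p+n}=0$ unless $t\leq p+n+1\leq p-2$. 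The content is therefore entirely in $n=-1$ and $n=-2$.

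For $n=-1$, the support conditions collapse the left side of~\eqref{equation: -1} to $\phi^0_{p,p-1}\circ\phi^{-1}_{p,p}+\phi^0_{p-1,p-1}\circ\phi^{-1}_{p,p-1}$; the term $\phi^0_{p+1,p-1}\circ\phi^{-1}_{p,p+1}$ that would otherwise appear drops out because $\phi^0_{p+1,p-1}=0$ in our assignment. Substituting the prescribed values — the diagonal components are the chosen lifts $\phi^{-1}_p,\phi^0_p$ and the distance-one components are $\pm\Phi^{\bullet}(h^{-1}_{\bullet})$ and $\pm\Phi^{\bullet}(h^{0}_{\bullet})$ — and cancelling the common outer power of $\Phi$, this becomes (for one parity of $p$) $\pm\Phi^{\bullet}\big(h^{-1}_p\circ\phi^{-1}_{p+1}-\Phi(\phi^{-1}_p)\circ h^0_p\big)$ and (for the other parity) the analogous expression assembled from $\Phi(h^0_p)\circ\phi^0_{p+1}$ and $\Phi(\phi^0_p)\circ h^{-1}_p$. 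These vanish precisely by the first two displayed hypotheses, which also handle~\eqref{equation: 0} for $n=-1$ with the opposite pairing against parity. For $n=-2$, the support conditions collapse~\eqref{equation: -1} to the single term $\phi^0_{p-1,p-2}\circ\phi^{-1}_{p,p-1}$, a composite of two distance-one (homotopy) components; substituting and simplifying the $\Phi$-powers, it equals $\pm\Phi^{\bullet}\big(\Phi(h^0_{p-1})\circ h^0_p\big)$ or $\pm\Phi^{\bullet}\big(\Phi(h^{-1}_{p-1})\circ h^{-1}_p\big)$ according to the parity of $p$, hence vanishes by the last two displayed hypotheses; Equation~\eqref{equation: 0} for $n=-2$ is symmetric.

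The main nuisance will be the bookkeeping of $\Phi$-twists. The data of the theorem satisfy $h^{-1}_j\colon I^{-1}_{j+1}\to\Phi(I^{-1}_j)$ and $h^0_j\colon\Phi^{-1}(I^0_{j+1})\to I^0_j$, while the folded components $\phi^{-1}_{p,q},\phi^0_{p,q}$ sit on various $\Phi$-powers of the $I^{\pm}_{\bullet}$; one must track carefully which power of $\Phi$ is applied to each homotopy and each lift in the collapsed sums above in order to confirm that, after stripping the common outer twist, one lands exactly on the four displayed identities — and, in particular, to pin down which hypothesis pairs with which parity of $p$ and with which of~\eqref{equation: -1},~\eqref{equation: 0}. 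The structural content, though, is simply that the first two hypotheses kill the distance-two part of the construction and the last two kill the distance-three part, after which every subsequent $\psi_u$ is a composite involving an already-vanishing component, so the zero null-homotopy continues to work trivially.
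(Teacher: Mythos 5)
Your proposal is correct and is essentially the paper's argument: the paper's proof simply asserts that with $\phi^{-1}_{p,q}=\phi^0_{p,q}=0$ for $q<p-1$ the defining equations \eqref{equation: -1} and \eqref{equation: 0} hold for all $n$, and your case analysis ($n\ge 0$ already covered by the theorem's base case, $n\le -3$ empty by support, $n=-1$ killed by the first two hypotheses, $n=-2$ by the last two) is exactly the verification being invoked. No gap.
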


\begin{proof}
 Under the hypotheses, we can take $\phi^{-1}_{p,q} = \phi^0_{p,q} = 0$ for $q < p-1$ and satisfy Equations \eqref{equation: -1} and \eqref{equation: 0} for all $n$.
\end{proof}

We also have the dual statement which we record in full detail for ease of future reference. Assume that $\mathcal A$ has enough projectives. Let $E$ be an object of $\op{Fact}(w)$. Choose projective resolutions of its components and lifts of $\phi^{-1}_E$ and $\phi^0_E$ to the those resolutions,
\begin{center}
 \begin{tikzpicture}[description/.style={fill=white,inner sep=2pt}]
 \matrix (m) [matrix of math nodes, row sep=2em, column sep=2.5em, text height=1.5ex, text depth=0.25ex]
 {  \cdots & \Phi^{-1}(P^0_{-1}) & \Phi^{-1}(P_0^0) & \Phi^{-1}(E^0) & 0 \\
    \cdots & P^{-1}_{-1} & P^{-1}_0 & E^{-1} & 0 \\ 
    \cdots & P^0_{-1} & P_0^0 & E^0 & 0. \\
    };
 \path[->,font=\scriptsize]
  (m-1-4) edge (m-1-5) 
  (m-1-3) edge node[above] {$\Phi^{-1}(d_0^0)$}(m-1-4)
  (m-1-2) edge node[above] {$\Phi^{-1}(d_{-1}^0)$}(m-1-3)
  (m-1-1) edge node[above] {$\Phi^{-1}(d_{-2}^0)$}(m-1-2) 

  (m-2-4) edge (m-2-5) 
  (m-2-3) edge node[above] {$d_0^{-1}$}(m-2-4)
  (m-2-2) edge node[above] {$d_{-1}^{-1}$}(m-2-3)
  (m-2-1) edge node[above] {$d_{-2}^{-1}$}(m-2-2)
  
  (m-3-4) edge (m-3-5) 
  (m-3-3) edge node[above] {$d_0^0$}(m-3-4)
  (m-3-2) edge node[above] {$d_{-1}^0$}(m-3-3)
  (m-3-1) edge node[above] {$d_{-2}^0$}(m-3-2) 
  
  (m-1-2) edge node[left] {$\phi^{-1}_{-1}$}(m-2-2)
  (m-1-3) edge node[left] {$\phi^{-1}_0$}(m-2-3)
  (m-1-4) edge node[left] {$\phi^{-1}_E$}(m-2-4)
  
  (m-2-2) edge node[left] {$\phi^{0}_{-1}$}(m-3-2)
  (m-2-3) edge node[left] {$\phi^{0}_0$}(m-3-3)
  (m-2-4) edge node[left] {$\phi^{0}_E$}(m-3-4)
 ;
 \end{tikzpicture} 
\end{center}
Also choose null-homotopies,
 \begin{center}
 \begin{tikzpicture}[description/.style={fill=white,inner sep=2pt}]
 \matrix (m) [matrix of math nodes, row sep=3.5em, column sep=3.5em, text height=1.5ex, text depth=0.25ex]
 {  \cdots & \Phi^{-1}(P^0_{-2}) & \Phi^{-1}(P_{-1}^0) & \Phi^{-1}(P_0^0) & \Phi^{-1}(E^0) & 0 \\
    \cdots & P^{0}_{-2} & P^{0}_{-1} & P^{0}_0 & E^0 & 0 \\ };
 \path[->,font=\scriptsize]
  (m-1-5) edge (m-1-6) 
  (m-1-4) edge node[above] {$\Phi^{-1}(d_0^0)$}(m-1-5)
  (m-1-3) edge node[above] {$\Phi^{-1}(d_{-2}^0)$}(m-1-4)
  (m-1-2) edge node[above] {$\Phi^{-1}(d_{-2}^0)$}(m-1-3)
  (m-1-1) edge node[above] {$\Phi^{-1}(d_{-3}^0)$}(m-1-2) 

  (m-2-5) edge (m-2-6) 
  (m-2-4) edge node[below] {$d_0^{0}$}(m-2-5)
  (m-2-3) edge node[below] {$d_{-1}^{0}$}(m-2-4)
  (m-2-2) edge node[below] {$d_{-2}^{0}$}(m-2-3)
  (m-2-1) edge node[below] {$d_{-3}^0$}(m-2-2) 
  
  (m-1-5) edge node[left] {$0$}(m-2-5)
  (m-1-4) edge node[left] {$\beta^0_0$}(m-2-4)
  (m-1-3) edge node[left] {$\beta^0_{-1}$}(m-2-3)
  (m-1-2) edge node[left] {$\beta^0_{-2}$}(m-2-2)
  
  (m-1-4) edge node[below] {$h^0_{-1}$}(m-2-3)
  (m-1-3) edge node[below] {$h^0_{-2}$}(m-2-2)
  (m-1-2) edge node[below] {$h^0_{-3}$}(m-2-1)
 ;
 \end{tikzpicture} 
 \end{center}
 where $\beta^0_j = w_{\Phi^{-1}(P^0_j)} - \phi^0_j \circ \phi^{-1}_j$, and
 \begin{center}
 \begin{tikzpicture}[description/.style={fill=white,inner sep=2pt}]
 \matrix (m) [matrix of math nodes, row sep=3.5em, column sep=3.5em, text height=1.5ex, text depth=0.25ex]
 {  \cdots & P^{-1}_{-2} & P_{-1}^{-1} & P_0^{-1} & E^{-1} & 0 \\
    \cdots & \Phi(P^{-1}_{-2}) & \Phi(P^{-1}_{-1}) & \Phi(P^{-1}_0) & \Phi(E^{-1}) & 0 \\ };
 \path[->,font=\scriptsize]
  (m-1-5) edge (m-1-6) 
  (m-1-4) edge node[above] {$d_0^{-1}$}(m-1-5)
  (m-1-3) edge node[above] {$d_{-2}^{-1}$}(m-1-4)
  (m-1-2) edge node[above] {$d_{-2}^{-1}$}(m-1-3)
  (m-1-1) edge node[above] {$d_{-3}^{-1}$}(m-1-2) 

  (m-2-5) edge (m-2-6) 
  (m-2-4) edge node[below] {$\Phi(d_0^{-1})$}(m-2-5)
  (m-2-3) edge node[below] {$\Phi(d_{-1}^{-1})$}(m-2-4)
  (m-2-2) edge node[below] {$\Phi(d_{-2}^{-1})$}(m-2-3)
  (m-2-1) edge node[below] {$\Phi(d_{-3}^{-1})$}(m-2-2) 
  
  (m-1-5) edge node[left] {$0$}(m-2-5)
  (m-1-4) edge node[left] {$\beta^{-1}_0$}(m-2-4)
  (m-1-3) edge node[left] {$\beta^{-1}_{-1}$}(m-2-3)
  (m-1-2) edge node[left] {$\beta^{-1}_{-2}$}(m-2-2)
  
  (m-1-4) edge node[below] {$h^{-1}_{-1}$}(m-2-3)
  (m-1-3) edge node[below] {$h^{-1}_{-2}$}(m-2-2)
  (m-1-2) edge node[below] {$h^{-1}_{-3}$}(m-2-1)
 ;
 \end{tikzpicture} 
 \end{center}
 where $\beta^{-1}_j = w_{P^{-1}_j} - \Phi(\phi^{-1}_j) \circ \phi^{0}_j$.

\begin{theorem} \label{theorem: strictification projective}
 Assume that small products exist in $\mathcal A$ and that $\mathcal A$ has enough projective objects. 

 Let $E$ be an object of $\op{Fact}(w)$. Choose projective resolutions of its components, lifts of $\phi^{-1}_E$ and $\phi^0_E$ to these projective resolutions, and null-homotopies of the difference of $w$ and the compositions of the lifts as above. 

 There exists a factorization, $P = (\op{tot}^{\prod}(P_{\bullet})^{-1},\op{tot}^{\prod}(P_{\bullet})^0,\phi^{-1}_P,\phi^0_P)$, $\prod$-folding $P_{\bullet}^{-1}$ and $P_{\bullet}^0$ and a contra-quasi-isomorphism, $d_0: P \to E$, such that
 \begin{itemize}
  \item The components of $\phi^{-1}_{p,q}$ and $\phi^0_{p,q}$ with $q = p, p-1$ are given by
  \begin{align*}
   \Phi^{-l-1}(\phi^{0}_{2l+1}) & = \phi^{-1}_{2l+1,2l+1} : \Phi^{-l-1}(P^{-1}_{2l+1}) \to \Phi^{-l-1}(P^{0}_{2l+1}) \\
   \Phi^{-l}(\phi^{-1}_{2l}) & = \phi^{-1}_{2l,2l} : \Phi^{-l-1}(P^{0}_{2l}) \to \Phi^{-l}(P^{-1}_{2l}) \\
   \Phi^{-l}(\phi^{-1}_{2l+1}) & = \phi^{0}_{2l+1,2l+1} : \Phi^{-l-1}(P^{0}_{2l+1}) \to \Phi^{-l}(P^{-1}_{2l+1}) \\
   \Phi^{-l}(\phi^{0}_{2l}) & = \phi^{0}_{2l,2l} : \Phi^{-l}(P^{-1}_{2l}) \to \Phi^{-l}(P^{0}_{2l}).
  \end{align*}
  and
  \begin{align*}
   \Phi^{-l-1}(h^{-1}_{2l}) & = \phi^{-1}_{2l+1,2l} : \Phi^{-l-1}(P^{-1}_{2l+1}) \to \Phi^{-l}(P^{-1}_{2l}) \\
   -\Phi^{-l}(h^{0}_{2l-1}) & = \phi^{-1}_{2l,2l-1} : \Phi^{-l-1}(P^{0}_{2l}) \to \Phi^{-l}(P^{0}_{2l-1}) \\
   -\Phi^{-l}(h^{0}_{2l}) & = \phi^{0}_{2l+1,2l} : \Phi^{-l-1}(P^{0}_{2l+1}) \to \Phi^{-l}(P^{0}_{2l}) \\
   \Phi^{-l}(h^{-1}_{2l-1}) & = \phi^{0}_{2l,2l-1} : \Phi^{-l}(P^{-1}_{2l}) \to \Phi^{-l+1}(P^{-1}_{2l-1}).
  \end{align*}
  \item $d_0$ is given by the compositions,
  \begin{align*}
   P^{-1} & \overset{d^{-1}_0}{\to} P_0^{-1} \to E^{-1} \\
   P^{0} & \overset{d^{0}_0}{\to} P_0^{0} \to E^{0}.
  \end{align*}
  \item $d_0$ is a quasi-isomorphism when both injective resolutions are finite.
 \end{itemize}
 
 Furthermore, if 
 \begin{align*}
  h^{-1}_p \circ \phi^{-1}_{p+1} & = \Phi(\phi^{-1}_p) \circ h^0_{p} \\
  \Phi(h^{0}_p) \circ \phi^{0}_{p+1} & = \Phi(\phi^{0}_p) \circ h^{-1}_{p} \\
  \Phi(h^{-1}_{p-1}) \circ h^{-1}_p & = 0 \\
  \Phi(h^{0}_{p-1}) \circ h^{0}_p & = 0.
 \end{align*}
 Then, we may take 
 \begin{displaymath}
  \phi^{-1}_{p,q} = \phi^0_{p,q} = 0
 \end{displaymath}
 for $q < p-1$.
\end{theorem}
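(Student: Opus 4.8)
The plan is to run the proof of Theorem~\ref{theorem: strictification injective} in the opposite category, exchanging injectives for projectives, coproducts for products, monomorphisms for epimorphisms, and $\op{tot}^{\bigoplus}$ for $\op{tot}^{\prod}$ throughout; the final clause then follows by dualizing Corollary~\ref{corollary: easier construction}.

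First I would build the maps $\phi^{-1}_{p,q}, \phi^0_{p,q}$ of the $\prod$-folding by induction. One sets these equal, for $q \geq p - 1$, to the morphisms prescribed in the statement --- the lifts $\phi^{\bullet}_j$ on the diagonal, the homotopies $h^{\bullet}_j$ on the first subdiagonal, and the differentials $d^{\bullet}_j$ of the $\prod$-folding on the superdiagonal --- which already forces the two factorization identities (the analogues of Equations~\eqref{equation: -1} and \eqref{equation: 0}) to hold for $n = 2, 1, 0$. One then runs the same downward induction: given the components for $q \geq p - m$ together with the identities for $n \geq -m+1$, constructing $\phi^{-1}_{s, s-m-1}$ and $\phi^0_{s, s-m-1}$ so that the identities hold for $n = -m-1$ reduces, exactly as before, to choosing a null-homotopy of a chain map $\psi_u$ between two $\Phi$-twisted, reindexed copies of the resolution $P^0_\bullet$ (or of $P^{-1}_\bullet$), each with homology concentrated in a single degree and with the two degrees separated by $m \geq 1$. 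Since $\psi_u$ induces zero on homology and is a map between bounded-above complexes of projectives, it is null-homotopic, and a chosen homotopy supplies the new components $\phi^0_{2q+1, 2q-m}$ and $\phi^{-1}_{2q,2q-m-1}$; relative to the injective case $\psi_u$ now points from the complex lying deeper in the resolution to the shallower one. The verification that $\psi_u$ is a chain map is the dual of the computation around Equation~\eqref{equation: chain map}: one uses the induction hypothesis to rewrite both composites and finds they agree, and nothing else changes.

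Next I would verify that $d_0 \colon P \to E$, defined by the compositions $P^{\bullet} \to P^{\bullet}_0 \to E^{\bullet}$, is a morphism of factorizations; as before the relevant squares factor through evidently commuting ones. That $d_0$ is a contra-quasi-isomorphism follows from Lemma~\ref{lemma: cones over foldings are folded}: $E$ folds the trivial complexes $(E^{-1}, E^0)$ and $P$ is a $\prod$-folding of $(P^{-1}_\bullet, P^0_\bullet)$, so $\op{Cone}(d_0)$ is a $\prod$-folding of the mapping cones of $d^{-1}_0 \colon P^{-1}_\bullet \to E^{-1}$ and $d^0_0 \colon P^0_\bullet \to E^0$; these mapping cones are bounded above and acyclic, so $\op{Cone}(d_0)$ is contra-acyclic by Lemma~\ref{lemma: acyclic folds to acyclic}, and is acyclic when the projective resolutions are bounded. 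For the additional statement I would dualize the proof of Corollary~\ref{corollary: easier construction}: the four displayed compatibilities among the $h^{\bullet}_p$ and $\phi^{\bullet}_p$ are precisely what makes the choice $\phi^{-1}_{p,q} = \phi^0_{p,q} = 0$ for $q < p-1$ satisfy the analogues of Equations~\eqref{equation: -1} and \eqref{equation: 0} for every $n$, so the induction terminates at the first step.

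The only genuine work beyond transcription is the bookkeeping: projective resolutions run in the negative direction, so the induction fills in the strictly subdiagonal components against complexes bounded above, the relevant chain map $\psi_u$ is reversed, and the boundedness hypothesis needed to invoke Lemma~\ref{lemma: acyclic folds to acyclic} is the dual one. I expect this index- and sign-tracking, rather than any new idea, to be the main obstacle.
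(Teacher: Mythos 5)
Your proposal is correct and follows exactly the route the paper takes: the published proof simply declares the statement dual to Theorem~\ref{theorem: strictification injective} and Corollary~\ref{corollary: easier construction} and passes to the opposite category. Your elaboration of the index-tracking (bounded-above complexes, reversed direction of $\psi_u$, the $\prod$-totalization) is a faithful unwinding of that dualization rather than a different argument.
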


\begin{proof}
 The statement is dual to those for Theorem \ref{theorem: strictification injective} and Corollary \ref{corollary: easier construction}.  Therefore, we may replace $\mathcal A$ by its opposite category.
\end{proof}

\begin{remark}
 The classical case of this construction is to let $R$ be a commutative Noetherian regular $k$-algebra, $\mathcal A = \op{mod}R$, $\Phi = \op{Id}$, and $w \in R$. We then consider an ideal $I$ containing $w$ and  generated by a regular sequence $(x_1, \ldots, x_n)$ so that we may write $w = \sum w_i x_i$.  We may consider the factorization $(0, R/I, 0 , 0)$.  The Koszul complex on $(x_1, \ldots, x_n)$ gives a projective resolution of $R/I$ and contraction with $(w_1, \ldots, w_n)$ gives a homotopy $h$ such that $h^2 =0$. The projective replacement 
 \[
 (\bigoplus_{l=0}^{\lfloor \frac{n}{2} \rfloor} \bigwedge\nolimits^{2l}\langle x_1, \ldots, x_n \rangle \otimes_k R, \bigoplus_{l=0}^{\lfloor \frac{n}{2} \rfloor} \bigwedge\nolimits^{2l+1}\langle x_1, \ldots, x_n \rangle \otimes_k R, d+h, d+h)
 \]
 of $(0, R/I, 0 , 0)$ is called the stabilization of $R/I$. This recovers Eisenbud's original construction \cite{EisMF}.
\end{remark}

As a first application, we give a spectral sequence for computing morphisms in $\op{D}^{\op{abs}}(\op{Fact} w)$. 

\begin{lemma} \label{lemma: spectral sequence injective}
 Let $E$ and $F$ be two factorizations of $w$. Assume that $\mathcal A$ has enough injectives and small coproducts, and assume that coproducts of injectives are injective.
 
 There is a spectral sequence whose $E_1$-page is 
 \begin{displaymath}
  E^{p,q}_1 = 
  \begin{cases} 
   \op{Ext}^{p+q-1}_{\mathcal A}(E^{-1},\Phi^{-s}(F^0)) \oplus \op{Ext}^{p+q}_{\mathcal A}(E^0,\Phi^{-s}(F^0)) & p = 2s \\ 
   \op{Ext}^{p+q-1}_{\mathcal A}(E^{-1},\Phi^{-s}(F^{-1})) \oplus \op{Ext}^{p+q}_{\mathcal A}(E^0,\Phi^{-s-1}(F^{-1})) & p = 2s+1.
  \end{cases}
 \end{displaymath}
 If the components of $F$ have finite injective dimension, the spectral sequence strongly converges to $\bigoplus_r \op{Hom}(E,F[r])$ taken in $\op{D}^{\op{co}}(\op{Fact} w)$ or $\op{D}^{\op{abs}}(\op{Fact} w)$.
\end{lemma}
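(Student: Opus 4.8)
The plan is to replace $F$ by the injective folding produced by Theorem~\ref{theorem: strictification injective}, identify the sought-after $\op{Hom}$-groups with the cohomology of the resulting $\op{Hom}$-complex, and then run the spectral sequence of a filtration of that complex by resolution degree.

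First I would apply Theorem~\ref{theorem: strictification injective} to $F$: fix injective resolutions $I^{-1}_{\bullet}, I^0_{\bullet}$ of $F^{-1}, F^0$, lifts of $\phi^{-1}_F, \phi^0_F$, and the null-homotopies $h^{\pm}_{\bullet}$, and obtain the $\bigoplus$-folding $I = (\op{tot}^{\bigoplus}(I_{\bullet})^{-1}, \op{tot}^{\bigoplus}(I_{\bullet})^{0}, \phi^{-1}_I, \phi^0_I)$ together with the co-quasi-isomorphism $d_0 \colon F \to I$. Since $I$ has injective components, Lemma~\ref{lemma: injective, projective fact orthogonal} shows that co-acyclic factorizations are left orthogonal to $I[r]$ in $\op{K}(\op{Fact} w)$, so by the standard Verdier-quotient argument (as in the proof of Corollary~\ref{corollary: homotopy category of inj/proj is derived cat}) the localization map $\op{Hom}_{\op{K}(\op{Fact} w)}(E, I[r]) \to \op{Hom}_{\op{D}^{\op{co}}(\op{Fact} w)}(E, I[r])$ is an isomorphism, and $d_0$ identifies the target with $\op{Hom}_{\op{D}^{\op{co}}(\op{Fact} w)}(E, F[r])$. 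Using that $\textbf{Hom}_w^{\bullet}(E, I[r]) = \textbf{Hom}_w^{\bullet+r}(E,I)$ and $\op{Hom}_{\op{K}(\op{Fact} w)}(E,-) = H^0(\textbf{Hom}^{\bullet}_w(E,-))$, this gives
\[
 \bigoplus_r \op{Hom}_{\op{D}^{\op{co}}(\op{Fact} w)}(E, F[r]) \;\cong\; \bigoplus_r \op{Hom}_{\op{K}(\op{Fact} w)}(E, I[r]) \;=\; H^{\ast}\big(\textbf{Hom}_w^{\bullet}(E, I)\big).
\]
When the components of $F$ have finite injective dimension I would instead use \emph{finite} injective resolutions; then by the last bullet of Theorem~\ref{theorem: strictification injective} together with Lemma~\ref{lemma: acyclic folds to acyclic} the cone of $d_0$ is totally acyclic, hence (being co-acyclic) still left orthogonal to $I$, so the same identification also holds with $\op{D}^{\op{abs}}(\op{Fact} w)$ in place of $\op{D}^{\op{co}}(\op{Fact} w)$.

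Next I would filter $\textbf{Hom}_w^{\bullet}(E, I)$. Each component of $I$ is a direct sum of terms of the form $\Phi^{-l}(I^{\pm}_a)$; call $a$ the \emph{resolution degree} of such a summand. Let $F^q \subseteq \textbf{Hom}_w^{\bullet}(E, I)$ be the subspace of those morphisms which, in total degree $n$, vanish on every summand of the target of resolution degree $> n - q$; this is a descending filtration. The key point is that $F^{\bullet}$ is a filtration by subcomplexes: the differential on $\textbf{Hom}_w^{\bullet}(E,I)$ is post-composition with the components of $\phi_{I[n]}$ and pre-composition with $\phi_E$, it raises total degree by one, and post-composition with the terms $\phi_{p,p}$ (lifts of $\phi_F$) and pre-composition with $\phi_E$ preserve the resolution degree of the target, post-composition with $\phi_{p,p+1}$ (the resolution differentials) raises it by one, and post-composition with the homotopy-correction terms $\phi_{p,q}$, $q < p$, lowers it; in every case the quantity (total degree) $-$ (resolution degree) weakly increases, so $F^q$ is preserved. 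On the associated graded $F^q/F^{q+1}$ only the resolution-differential part of $\phi_I$ survives, so the $E_0$-page is, up to reindexing, the complex $\op{Hom}_{\mathcal A}(E^{\pm}, \Phi^{-l}(I^{\pm}_{\bullet}))$ with differential induced by $d^{\pm}_{\bullet}$; taking cohomology in the resolution direction produces $\op{Ext}$-groups, and tracking the $\Phi$-twists in the formulas for the components of $I$ together with the internal shift in $\textbf{Hom}_w^n(E,I) = \op{Hom}_{\mathcal A}(E^{-1}, (I[n])^{-1}) \oplus \op{Hom}_{\mathcal A}(E^0, (I[n])^0)$ yields exactly the stated $E_1$-page.

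Finally, for convergence I would invoke the standard convergence theorem for the spectral sequence of a filtered complex. In total degree $n$ every summand of $I$ has resolution degree $\geq 0$, so $F^q\,\textbf{Hom}_w^n(E,I) = 0$ for $q > n$; and if the injective resolutions have length $\leq N$, the resolution degree is also $\leq N$, so $F^q\,\textbf{Hom}_w^n(E,I) = \textbf{Hom}_w^n(E,I)$ for $q \leq n - N$. Thus the filtration is finite in each total degree, giving strong convergence to $H^{\ast}(\textbf{Hom}_w^{\bullet}(E,I))$, which by the first step equals $\bigoplus_r \op{Hom}(E, F[r])$ computed in $\op{D}^{\op{co}}(\op{Fact} w)$ or $\op{D}^{\op{abs}}(\op{Fact} w)$. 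The main obstacle is the second step: verifying that $F^{\bullet}$ is genuinely a filtration by subcomplexes and pinning down $E_0$ and $E_1$, which is exactly where one must control the homotopy-correction components $\phi_{p,q}$ with $q < p-1$ of the folded differential supplied by Theorem~\ref{theorem: strictification injective} and carefully bookkeep the twists by $\Phi$ and the internal degree shifts so as to land on the displayed formula; the rest is formal.
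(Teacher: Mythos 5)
Your proposal is correct and follows essentially the same route as the paper: replace $F$ by the injective $\bigoplus$-folding from Theorem~\ref{theorem: strictification injective}, filter $\textbf{Hom}_w^*(E,I)$ by resolution degree so that only the resolution differentials survive on the associated graded, and observe degeneration/convergence when the resolutions are finite. You in fact supply two details the paper leaves implicit — the identification of the abutment with $\bigoplus_r\op{Hom}(E,F[r])$ via Lemma~\ref{lemma: injective, projective fact orthogonal}, and the verification that the filtration is by subcomplexes because the components $\phi_{p,q}$ with $q<p$ only lower resolution degree — both of which are accurate.
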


\begin{proof}
 Choose finite injective resolutions of $F^{-1}$ and $F^0$,
 \begin{align*}
  0 & \to F^{-1} \to I^{-1}_0 \to I^{-1}_1 \to \cdots \\
  0 & \to F^0 \to I^0_0 \to I^0_1 \to \cdots,
 \end{align*}
 and use Theorem \ref{theorem: strictification injective} to construct a co-quasi-isomorphic resolution, $I$, of $F$.
 
 Filter the complex, $\textbf{Hom}_w^*(E,I)$, by
 \begin{gather*}
  \mathcal F^p \textbf{Hom}_w^n(E,I) := \{ (g^{-1},g^0) \mid \\
  \begin{cases}
   g^{-1}(E^{-1}) \subseteq \bigoplus_{2l \leq n+p-1} \Phi^{m-l}(I^{-1}_{2l}) \oplus \bigoplus_{2l+1 \leq n+p-1} \Phi^{m-l-1}(I^0_{2l+1}) \\
   g^{0}(E^{0}) \subseteq \bigoplus_{2l \leq n+p} \Phi^{m-l}(I^{0}_{2l}) \oplus \bigoplus_{2l+1 \leq n+p} \Phi^{m-l}(I^{-1}_{2l+1}) & n=2m \\
   g^{-1}(E^{-1}) \subseteq \bigoplus_{2l \leq n+p-1} \Phi^{m-l}(I^{0}_{2l}) \oplus \bigoplus_{2l+1 \leq n+p-1} \Phi^{m-l}(I^{-1}_{2l+1}) \\
   g^{0}(E^{0}) \subseteq \bigoplus_{2l \leq n+p} \Phi^{m-l+1}(I^{-1}_{2l}) \oplus \bigoplus_{2l+1 \leq n+p} \Phi^{m-l}(I^{-1}_{2l+1}) & n=2m+1 \}.
  \end{cases}
 \end{gather*}
 The associated graded complex is
 \begin{gather*}
  \op{Gr}^p \textbf{Hom}_w^n(E,I) := 
  \begin{cases}
   \op{Hom}_{\mathcal A}(E^{-1}, \Phi^{-q}(I_{p+n-1}^{0})) \oplus \op{Hom}_{\mathcal A}(E^{0}, \Phi^{-q}(I_{p+n}^{0})) & p=2q \\
   \op{Hom}_{\mathcal A}(E^{-1}, \Phi^{-q}(I_{p+n-1}^{-1})) \oplus \op{Hom}_{\mathcal A}(E^{0}, \Phi^{-q-1}(I_{p+n}^{-1})) & p=2q+1.
  \end{cases}
 \end{gather*}
 with differentials given by composition with the differentials in the complexes $I^{-1}_{\bullet}$ and $I^{0}_{\bullet}$. We set
 \begin{displaymath}
  E^{p,q}_0 := \op{Gr}^p \textbf{Hom}_w^q(E,I)
 \end{displaymath}
 to start our spectral sequence. The $E_1$-page is as above. 
 
 If we assume that the components of $F$ have injective resolutions of length $t$, then the spectral sequence degenerates at the $(t+1)$-st page.
\end{proof}

\begin{lemma} \label{lemma: spectral sequence projective}
 Let $E$ and $F$ be two factorizations of $w$. Assume that $\mathcal A$ has enough projectives and small products. Further, assume that products of projectives remain projective.
 
 There is a spectral sequence whose $E_1$-page is 
 \begin{displaymath}
  E^{p,q}_1 = 
  \begin{cases} 
   \op{Ext}^{p+q-1}_{\mathcal A}(E^{-1},\Phi^{-s}(F^0)) \oplus \op{Ext}^{p+q}_{\mathcal A}(E^0,\Phi^{-s}(F^0)) & p = 2s \\ 
   \op{Ext}^{p+q-1}_{\mathcal A}(E^{-1},\Phi^{-s}(F^{-1})) \oplus \op{Ext}^{p+q}_{\mathcal A}(E^0,\Phi^{-s-1}(F^{-1})) & p = 2s+1.
  \end{cases}
 \end{displaymath}
 If the components of $E$ have finite projective dimension, the spectral sequence strongly converges to $\bigoplus_r \op{Hom}_{\op{D}^{\op{ctr}}(\op{Fact} w)}(E,F[r])$.
\end{lemma}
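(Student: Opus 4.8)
\emph{Proof proposal.} The plan is to mirror the proof of Lemma~\ref{lemma: spectral sequence injective}, working with a projective folding of $E$ rather than an injective folding of $F$. First I would choose finite projective resolutions $P^{-1}_\bullet \to E^{-1}$ and $P^0_\bullet \to E^0$, lift $\phi^{-1}_E$ and $\phi^0_E$ to them, and fix null-homotopies $h^{-1}_\bullet, h^0_\bullet$ as in the discussion preceding Theorem~\ref{theorem: strictification projective}. Applying that theorem produces a $\prod$-folding $P = (\op{tot}^{\prod}(P_\bullet)^{-1}, \op{tot}^{\prod}(P_\bullet)^{0}, \phi^{-1}_P, \phi^0_P)$ together with a contra-quasi-isomorphism $d_0 \colon P \to E$. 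Because $E^{-1}$ and $E^0$ have finite projective dimension, $P$ folds a pair of \emph{bounded} complexes, so only finitely many of the layers $\Phi^{-l}(P^{-1}_j), \Phi^{-l}(P^0_j)$ are nonzero.

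Next I would reduce the computation of $\op{Hom}_{\op{D}^{\op{ctr}}(\op{Fact} w)}(E, F[r])$ to a cohomology group of the complex $\textbf{Hom}_w^*(P, F)$. Since $P$ has projective components, Lemma~\ref{lemma: injective, projective fact orthogonal} shows that $P$ is left-orthogonal in $\op{K}(\op{Fact} w)$ to every contra-acyclic factorization (and, as $\op{Ctr-acycl}(w)$ is shift-stable, also to its shifts); hence the localization functor $\op{K}(\op{Fact} w) \to \op{D}^{\op{ctr}}(\op{Fact} w)$ induces isomorphisms $\op{Hom}_{\op{K}(\op{Fact} w)}(P, F[r]) \cong \op{Hom}_{\op{D}^{\op{ctr}}(\op{Fact} w)}(P, F[r])$, and the contra-quasi-isomorphism $d_0$ identifies the right-hand side with $\op{Hom}_{\op{D}^{\op{ctr}}(\op{Fact} w)}(E, F[r])$. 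Thus $\bigoplus_r \op{Hom}_{\op{D}^{\op{ctr}}(\op{Fact} w)}(E, F[r]) \cong \bigoplus_r H^r\textbf{Hom}_w^*(P, F)$, and it suffices to build a spectral sequence converging to the cohomology of $\textbf{Hom}_w^*(P, F)$.

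Then I would filter $\textbf{Hom}_w^*(P, F)$ by the resolution-degree of the summand of $P$ on which a homomorphism is supported in the source variable, the evident dualization of the filtration $\mathcal F^p\textbf{Hom}_w^n(E, I)$ used in Lemma~\ref{lemma: spectral sequence injective}. The key point making this a filtration by subcomplexes is the structure of $\phi^{-1}_P, \phi^0_P$ recorded in Theorem~\ref{theorem: strictification projective}: the components $\phi^{\bullet}_{p,q}$ vanish unless $q \leq p$, the length-one pieces $\phi^{\bullet}_{p,p-1}$ are, up to $\Phi$-twist and sign, the homotopies $h^{\bullet}_\bullet$, and the diagonal pieces $\phi^{\bullet}_{p,p}$ are the lifts $\phi^{\bullet}_j$; precomposition with any of these, and postcomposition with $\phi^{\bullet}_F$, cannot lower the resolution-degree, so the differential respects the filtration. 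On the associated graded only the diagonal pieces survive, so $\op{Gr}^p\textbf{Hom}_w^*(P,F)$ is a direct sum of complexes $\op{Hom}_{\mathcal A}(P^{\bullet}_{\text{(fixed degree)}}, \Phi^{?}(F^{\bullet}))$ with differential induced by the resolution differentials $d^{\bullet}_\bullet$; its cohomology is exactly the $E_1$-term displayed in the statement, with the stated $\op{Ext}$-bigrading. Since $P$ is bounded, the filtration is finite, so the spectral sequence has finitely many nonzero columns and converges strongly.

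The step I expect to be the main obstacle is the bookkeeping in checking that the proposed filtration is by subcomplexes and in matching the resulting bigrading with the displayed formula, where the precise sign and $\Phi$-twist conventions of Theorem~\ref{theorem: strictification projective} must be tracked carefully. An alternative that avoids redoing this bookkeeping is to deduce the lemma from Lemma~\ref{lemma: spectral sequence injective} applied to $\mathcal A^{\op{op}}$, exactly as Theorem~\ref{theorem: strictification projective} is deduced from Theorem~\ref{theorem: strictification injective}: under the replacement of $\mathcal A$ by $\mathcal A^{\op{op}}$, projectives become injectives, $\op{D}^{\op{ctr}}$ becomes $\op{D}^{\op{co}}$, finite projective dimension becomes finite injective dimension, the roles of $E$ and $F$ are exchanged, and $\op{Ext}^n_{\mathcal A}(E^{\bullet}, F^{\bullet}) \cong \op{Ext}^n_{\mathcal A^{\op{op}}}(F^{\bullet}, E^{\bullet})$; one then only has to observe that the $E_1$-page of Lemma~\ref{lemma: spectral sequence injective}, read in the opposite category, reproduces the displayed formula, which it does precisely because that formula is symmetric under this exchange.
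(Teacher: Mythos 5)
Your proposal is correct, and the ``alternative'' you describe in your final paragraph --- passing to $\mathcal A^{\op{op}}$ and invoking Lemma~\ref{lemma: spectral sequence injective} --- is exactly the paper's entire proof. The more explicit direct route via Theorem~\ref{theorem: strictification projective} and the dualized filtration is also sound, but the paper dispenses with that bookkeeping entirely by the opposite-category reduction.
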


\begin{proof}
 We may replace $\mathcal A$ by its opposite category and apply Lemma \ref{lemma: spectral sequence injective}.
\end{proof}

\section{Derived factored functors and some applications of the resolutions} \label{section: applications}

\begin{definition}
Consider two triples, $(\mathcal A, \Phi, w)$ and $(\mathcal B, \Psi, v)$. An additive functor, $\theta: \mathcal A \to \op{Ch}^{\op{b}}(\mathcal B)$, is called \newterm{factored} if there is a natural isomorphism
\begin{displaymath}
\epsilon_{\theta}:  \theta \circ \Phi \cong \Psi \circ \theta
\end{displaymath}
and 
\begin{displaymath}
 \theta(w_A) = v_{\theta(A)} : \theta(A) \to \theta(\Phi(A)) \cong \Psi(\theta(A)).
\end{displaymath}
for all objects, $A \in \mathcal A$. Here we extend $\Psi$ and $v$, in the obvious manner, to the Abelian category $\op{Ch}^{\op{b}}(\mathcal B)$.
\end{definition}

A factored functor induces a functor on factorization categories,
 \begin{align*}
  \theta^f : \op{Fact}(w) & \to \op{Fact}(v) \\
  E & \mapsto \op{tot}(\theta(E^{-1}),\theta(E^{0}),\theta(\phi^{-1}_E),\theta(\phi^0_E))
 \end{align*}
 where $\op{tot}$ is the totalization of the complex of factorizations $(\theta(E^{-1}),\theta(E^{0}),\theta(\phi^{-1}_E),\theta(\phi^0_E))$.

\begin{definition}
 Let $\theta$ be a left exact factored functor. A factorization, $E$ is called $\theta$-\newterm{adapted} if the components of $E$ are $\theta$-adapted in the usual sense. Recall that this means that whenever we have an exact sequence 
 \begin{displaymath}
  0 \to E^i \to A \to A^{\prime} \to 0
 \end{displaymath}
 is a short exact sequence in $\mathcal A$ for $i=-1,0$, then 
 \begin{displaymath}
  0 \to \theta(E^i) \to \theta(A) \to \theta(A^{\prime}) \to 0 
 \end{displaymath}
 is a short exact sequence in $\op{Ch}(\mathcal B)$. 
 
 Let $\theta$ be a right exact factored functor. A factorization, $E$ is called $\theta$-\newterm{adapted} if the components of $E$ are $\theta$-adapted in the usual sense. 
%
 In either of these cases, let $\mathcal Adp_{\theta}$ be the full additive subcategory of $\mathcal A$ consisting of $\theta$-adapted objects.
\end{definition}
 
\begin{definition}
 If one of the following assumptions is satisfied
 \begin{itemize}
  \item $\mathcal A$ has small coproducts, $\theta$ is left exact and commutes with coproducts, and the full subcategory of $\theta$-adapted objects in $\mathcal A$ satisfies the conditions of Proposition~\ref{proposition: adapted co-replacement}, or
  \item $\mathcal A$ has small products, $\theta$ is left exact and commutes with products, and the full subcategory of $\theta$-adapted objects in $\mathcal A$ satisfies the conditions of Proposition~\ref{proposition: adapted contra-replacement}, or
  \item $\theta$ is left or right exact and the full subcategory of $\theta$-adapted objects in $\mathcal A$ satisfies the conditions of Proposition~\ref{proposition: adapted abs-replacement}, 
 \end{itemize}
 then we say there is \newterm{enough $\theta$-adapted factorizations}. 
 
 If $\mathcal A$ has small coproducts, $\theta$ is left exact and commutes with coproducts, and there is enough $\theta$-adapted factorizations, we define the \newterm{right co-derived factored functor} of $\theta$
 to be the composition 
 \begin{displaymath}
  \mathbf{R}\theta^f : \dcoFact{w} \overset{\op{Q}^{-1}_{\mathcal Adp_{\theta}}}{\to} \dcoFact{\mathcal Adp_{\theta},w} \overset{\theta^f}{\to} \dcoFact{v},
 \end{displaymath}
 where $\op{Q}_{\mathcal Adp_{\theta}}$ is the equivalence of Proposition~\ref{proposition: adapted co-replacement}.
 
 If $\mathcal A$ has small products, $\theta$ is right exact and commutes with coroducts, and there is enough $\theta$-adapted factorizations, we define the \newterm{left contra-derived factored functor} of $\theta$
 to be the composition 
 \begin{displaymath}
  \mathbf{L}\theta^f : \dconFact{w} \overset{\op{Q}^{-1}_{\mathcal Adp_{\theta}}}{\to} \dconFact{\mathcal Adp_{\theta},w} \overset{\theta^f}{\to} \dconFact{v},
 \end{displaymath}
 where $\op{Q}_{\mathcal Adp_{\theta}}$ is the equivalence of Proposition~\ref{proposition: adapted contra-replacement}.
 
 If $\theta$ is left exact/right exact and there is enough $\theta$-adapted factorizations, we define the \newterm{right/left absolutely-derived factored functor} of $\theta$
 to be the composition 
 \begin{displaymath}
  \mathbf{R}\theta^f/\mathbf{L}\theta^f : \dabsFact{w} \overset{\op{Q}^{-1}_{\mathcal Adp_{\theta}}}{\to} \dabsFact{\mathcal Adp_{\theta},w} \overset{\theta^f}{\to} \dabsFact{v},
 \end{displaymath}
 where $\op{Q}_{\mathcal Adp_{\theta}}$ is the equivalence of Proposition~\ref{proposition: adapted abs-replacement}.
\end{definition}

\begin{remark}
 Injective objects are always adapted. Therefore, for right derived functors to exist, it is simplest to assume that $\mathcal A$ has small coproducts and enough injectives, and that coproducts of injectives are injective.  This happens, e.g., for quasi-coherent sheaves over a Noetherian scheme.
\end{remark}

\begin{lemma}
Given two factored functors,
\[
\theta :  \mathcal A \to  \op{Ch}^{\op{b}} (\mathcal B)
\]
and
\[
\gamma : \mathcal B \to  \op{Ch}^{\op{b}} (\mathcal C),
\]
the functor 
\begin{align*}
\op{tot} (\gamma \circ \theta): \mathcal A & \to \op{Ch}^{\op{b}} (\mathcal C) \\
a & \mapsto \op{tot}(\gamma(\theta(a))
\end{align*}
 where $\op{tot}$ denotes the total complex of a double complex over $\mathcal C$, is factored.
\end{lemma}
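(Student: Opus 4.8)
The goal is to unwind the definition of \emph{factored functor}. Write $(\mathcal C, \Xi, u)$ for the triple carried by $\mathcal C$ — the target triple of $\gamma$ — extended termwise to $\op{Ch}^{\op{b}}(\mathcal C)$ just as in the statement for $\theta$. We must produce (i) a natural isomorphism $\epsilon\colon \op{tot}(\gamma\circ\theta)\circ\Phi \cong \Xi\circ\op{tot}(\gamma\circ\theta)$ and (ii) an equality $\op{tot}(\gamma(\theta(w_A))) = u_{\op{tot}(\gamma(\theta(A)))}$ for every $A\in\mathcal A$. Everything is assembled from data already present: the natural isomorphisms $\epsilon_\theta\colon \theta\circ\Phi\cong\Psi\circ\theta$ and $\epsilon_\gamma\colon \gamma\circ\Psi\cong\Xi\circ\gamma$, the identities $\theta(w_A)=v_{\theta(A)}$ and $\gamma(v_B)=u_{\gamma(B)}$, and formal properties of $\op{tot}$.

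The one structural fact to record first is how $\op{tot}$ interacts with termwise operations. For $a\in\mathcal A$, the object $\gamma(\theta(a))$ is the bounded double complex over $\mathcal C$ with $(i,j)$-entry $\gamma(\theta(a)^i)^j$, one differential induced by $d_{\theta(a)}$ and the other by the differentials of the complexes $\gamma(\theta(a)^i)$, and $\op{tot}(\gamma\circ\theta)(a)$ is the associated signed total complex. Since an additive endofunctor $F$ of $\mathcal C$ preserves finite biproducts and satisfies $F(-g)=-F(g)$, applying $F$ termwise to any bounded double complex and then totalizing agrees with totalizing first and then applying $F$ termwise; likewise a natural transformation between additive endofunctors of $\mathcal C$, applied termwise to a bounded double complex, totalizes to the corresponding transformation on total complexes (its compatibility with both differentials of the total complex being exactly naturality). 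These are the two facts that let the three displays defining ``factored'' be pushed through $\op{tot}$.

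To build $\epsilon$: apply $\gamma$ termwise to $\epsilon_\theta$ to obtain an isomorphism of double complexes $\gamma(\theta(\Phi(a)))\cong\gamma(\Psi(\theta(a)))$ — a chain map horizontally because $\epsilon_\theta$ is one, and vertically because $\gamma$ is a functor. Then in each horizontal degree $i$ apply $\epsilon_\gamma$ at the object $\theta(a)^i$; by naturality of $\epsilon_\gamma$ this is compatible with the horizontal differential, so it assembles to an isomorphism of double complexes $\gamma(\Psi(\theta(a)))\cong\Xi(\gamma(\theta(a)))$ with $\Xi$ acting termwise. Totalizing, and using the second paragraph for $F=\Xi$, gives $\op{tot}(\gamma(\theta(\Phi(a))))\cong\Xi(\op{tot}(\gamma(\theta(a))))$, natural in $a$ by naturality of $\epsilon_\theta,\epsilon_\gamma$ and functoriality of $\gamma$ and $\op{tot}$. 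For (ii): $\theta(w_A)=v_{\theta(A)}$ is the morphism of $\mathcal B$-complexes which in degree $i$ is $v_{\theta(A)^i}$, so $\gamma(\theta(w_A))$ has bidegree-$(i,j)$ component $\gamma(v_{\theta(A)^i})^j = (u_{\gamma(\theta(A)^i)})^j = u_{\gamma(\theta(A)^i)^j}$, using $\gamma(v_B)=u_{\gamma(B)}$ and the termwise definition of $u$ on complexes; totalizing and again applying the second paragraph to the natural transformation $u$ yields $\op{tot}(\gamma(\theta(w_A))) = u_{\op{tot}(\gamma(\theta(A)))}$, as needed. (The standing identity $u_{\Xi(X)}=\Xi(u_X)$ on $\op{Ch}^{\op{b}}(\mathcal C)$ is part of the triple structure and requires no proof here.)

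The argument is entirely formal; the only step demanding genuine care is the bookkeeping in the second paragraph — verifying, with the signs, that termwise application of additive functors and of natural transformations commutes with forming the total complex — since that is precisely the content which upgrades the pointwise compatibilities of $\theta$ and $\gamma$ to a factored structure on $\op{tot}(\gamma\circ\theta)$.
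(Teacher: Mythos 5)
Your proof is correct and is simply a careful expansion of the paper's own one-line argument, which states that the claim is immediate from the definitions and from the way totalization extends the autoequivalences and natural transformations. The structural fact you isolate in your second paragraph --- that termwise application of additive functors and of natural transformations commutes with forming the total complex --- is exactly the content the paper leaves implicit, so there is nothing to add or correct.
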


\begin{proof}
This is immediate from the definitions and the way that totalization extends the autoequivalences and natural transformations.
\end{proof}

\begin{corollary} \label{cor: composition of derived functors}
 Consider three triples, $(\mathcal A, \Phi, w), (\mathcal B, \Psi, v)$ and $(\mathcal C, \Upsilon, u)$
 \begin{align*}
  \theta &: \mathcal A \to \op{Ch}^{\op{b}}(\mathcal B) \\
  \gamma &: \mathcal B \to \op{Ch}^{\op{b}}(\mathcal C)
 \end{align*}
 be left exact factored functors. Suppose that there are enough $\theta$-adapted factorizations whose images are $\gamma$-adapted. Then, one has a natural isomorphism of derived factored functors
 \[
  \mathbf{R}\gamma^f \circ \mathbf{R}\theta^f \cong \mathbf{R}(\op{tot}(\gamma \circ \theta))^f.
 \]
 
 If we replace the assumption that that $\theta$ and $\gamma$ are left exact with the assumption that they are right exact, we have a natural isomorphism
 \begin{displaymath}
  \mathbf{L}\gamma^f \circ \mathbf{L}\theta^f \cong \mathbf{L}(\op{tot}(\gamma \circ \theta))^f.
 \end{displaymath}
\end{corollary}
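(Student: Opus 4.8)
\emph{Step 1 (one resolution computes the composite).}
The plan is to funnel both derived functors through a single, well-chosen resolution and then compare the two resulting iterated totalizations by a Fubini argument; I treat the left exact case (so the derived functors live over co-derived or absolute derived categories) and obtain the right exact case afterwards by passing to opposite categories. Let $\mathcal C_\theta$ be the full subcategory of those $\theta$-adapted factorizations $E'$ for which, in addition, every term of the complexes $\theta(E'^{-1})$ and $\theta(E'^{0})$ is $\gamma$-adapted --- equivalently, for which $\theta^f(E')$ has components in $\mathcal Adp_\gamma$. The hypothesis ``enough $\theta$-adapted factorizations whose images are $\gamma$-adapted'' says precisely that $\mathcal C_\theta$ satisfies the conditions of the relevant replacement proposition (Proposition~\ref{proposition: adapted co-replacement} or~\ref{proposition: adapted abs-replacement}). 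Since a right derived functor does not depend on which class of adapted objects one resolves with --- because the inclusion $\op{Fact}(\mathcal C_\theta,w)\hookrightarrow\op{Fact}(\mathcal Adp_\theta,w)$ induces an equivalence on the relevant derived categories compatible with the $\op{Q}$'s --- we may use $\mathcal C_\theta$ in place of $\mathcal Adp_\theta$ when computing $\mathbf{R}\theta^f$. So, for $E\in\op{Fact}(w)$, choose a resolution $E\to E'$ with $E'\in\op{Fact}(\mathcal C_\theta,w)$; then $\theta^f(E')$ already has $\mathcal Adp_\gamma$-components, so $\op{Q}^{-1}_{\mathcal Adp_\gamma}$ leaves it fixed and $\mathbf{R}\gamma^f(\theta^f(E'))=\gamma^f(\theta^f(E'))$. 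Hence $\mathbf{R}\gamma^f\circ\mathbf{R}\theta^f(E)=\gamma^f(\theta^f(E'))$.

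\emph{Step 2 (the same resolution computes the composite functor).}
First, $\op{tot}(\gamma\circ\theta)$ is factored by the preceding lemma, and it is left exact (a composition of left exact functors, followed by the exact totalization functor of Definition~\ref{definition: totalizations}). Next I claim every $E'\in\op{Fact}(\mathcal C_\theta,w)$ is $\op{tot}(\gamma\circ\theta)$-adapted: given a short exact sequence $0\to E'^{i}\to A\to A'\to 0$ in $\mathcal A$ (for $i=-1,0$), $\theta$-adaptedness gives a degreewise short exact sequence $0\to\theta(E'^i)\to\theta(A)\to\theta(A')\to 0$ in $\op{Ch}^{\op{b}}(\mathcal B)$; applying $\gamma$ degreewise keeps it degreewise exact because each term of $\theta(E'^i)$ is $\gamma$-adapted, and the total object in each fixed total degree is a finite direct sum, so totalizing preserves exactness. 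Thus $\mathcal C_\theta$ also computes $\mathbf{R}(\op{tot}(\gamma\circ\theta))^f$, and, using the very same resolution $E\to E'$ of Step 1, $\mathbf{R}(\op{tot}(\gamma\circ\theta))^f(E)=(\op{tot}(\gamma\circ\theta))^f(E')$.

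\emph{Step 3 (Fubini, and conclusion).}
It now suffices to produce a natural isomorphism $\gamma^f\circ\theta^f\cong(\op{tot}(\gamma\circ\theta))^f$ of functors $\op{Fact}(w)\to\op{Fact}(u)$. Applied to $E$, both functors assemble a factorization from the same triply-graded data $\gamma(\theta(E^{-1}))$, $\gamma(\theta(E^{0}))$ together with the maps induced by $\phi^{-1}_E,\phi^{0}_E$: on the left one totalizes first the ``$\theta$-direction'' (inside $\theta^f$) and then the ``factorization direction'' (with the ``$\gamma$-direction'' already totalized inside each $\gamma(\theta(E^{i}))$); on the right one totalizes the $\gamma$- and $\theta$-directions first (inside $\op{tot}(\gamma\circ\theta)$) and then the factorization direction. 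These agree by the associativity of the total object of a triple complex, together with the compatibility of the extensions of $\Psi,v$ and $\Upsilon,u$ with totalization recorded just before this corollary; the only genuine bookkeeping is matching the Koszul signs. This isomorphism is a natural isomorphism in the abelian categories of factorizations, hence descends to the homotopy categories and their Verdier quotients, and precomposing with the common resolution functor $E\mapsto E'$ of Steps 1--2 yields $\mathbf{R}\gamma^f\circ\mathbf{R}\theta^f\cong\mathbf{R}(\op{tot}(\gamma\circ\theta))^f$. The right exact case follows by applying the above to the opposite categories $\mathcal A^{\op{op}},\mathcal B^{\op{op}},\mathcal C^{\op{op}}$. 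The main obstacle is Step 3: writing the Fubini isomorphism so that it is honestly compatible with the factorization differentials $\phi^{\pm 1}$ --- which are themselves built by a totalization --- with all signs correct; Steps 1 and 2 are formal consequences of the replacement equivalences of Section~\ref{section: basics}.
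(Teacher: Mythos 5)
Your proposal is correct and follows essentially the same route as the paper: resolve by $\theta$-adapted factorizations whose images under $\theta$ are $\gamma$-adapted, observe that this class simultaneously computes $\mathbf{R}\gamma^f\circ\mathbf{R}\theta^f$ and $\mathbf{R}(\op{tot}(\gamma\circ\theta))^f$, and reduce to the underived identification $\gamma^f\circ\theta^f\cong(\op{tot}(\gamma\circ\theta))^f$, which is an interchange-of-totalizations (Fubini) isomorphism. The paper compresses your Steps 1--2 into a single sentence and dismisses Step 3 as "the order of totalization does not matter up to isomorphism," so your write-up simply supplies the details the paper leaves implicit.
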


\begin{proof}
 We can use $\theta$-adapted factorizations whose image under $\theta$ are $\gamma$-adapted. Plugging these in we are left with checking that
 \begin{displaymath}
  \gamma^f \circ \theta^f \cong \left( \op{tot} ( \gamma \circ \theta ) \right)^f.
 \end{displaymath}
 The difference between the two sides is the order of totalization, which does not matter up to isomorphism. 
\end{proof}

\begin{definition}
Let $\theta, \gamma$ be factored functors.  A \newterm{factored natural transformation} is a natural transformation
\[
\eta : \theta \to \gamma
\]
such that
\[
\Phi(\eta_A) = \eta_{\Phi(A)} 
\]
for any $A \in \mathcal A$.
\end{definition}

\begin{lemma} \label{lem: natural}
 Let $\eta: \theta \to \gamma$ be a factored natural transformation. Then there is a natural transformation
 \begin{align*}
  \eta^f & : \theta^f \to \gamma^f \\
  \eta^f_E & := (\eta_{E^{-1}}, \eta_{E^0})
 \end{align*}
 and an induced natural transformations on derived factored functors (if they exist)
 \begin{align*}
  \mathbf{R}\eta^f & : \mathbf{R}\theta^f \to \mathbf{R}\gamma^f \\
  \mathbf{L}\eta^f & : \mathbf{L}\theta^f \to \mathbf{L}\gamma^f.
 \end{align*}
\end{lemma}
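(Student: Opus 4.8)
The plan is to first build $\eta^f$ at the level of $\op{Fact}(w)$ by a componentwise diagram chase, and then descend it to the derived categories using the replacement equivalences of Section~\ref{section: basics}. First I would fix $E \in \op{Fact}(w)$. Applying $\theta$ componentwise produces the factorization $(\theta(E^{-1}),\theta(E^0),\theta(\phi^{-1}_E),\theta(\phi^0_E))$ for the triple $(\op{Ch}^{\op{b}}(\mathcal B),\Psi,v)$, whose totalization is $\theta^f(E)$, and similarly for $\gamma$. The natural transformation $\eta$ supplies chain maps $\eta_{E^{-1}}\colon\theta(E^{-1})\to\gamma(E^{-1})$ and $\eta_{E^0}\colon\theta(E^0)\to\gamma(E^0)$. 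To see that the pair $(\eta_{E^{-1}},\eta_{E^0})$ is a morphism of factorizations, one checks the two defining squares: commutativity with $\theta(\phi^i_E)$ versus $\gamma(\phi^i_E)$ follows from naturality of $\eta$ applied to the morphism $\phi^i_E$ of $\mathcal A$, while the compatibility with $\Psi$ — i.e.\ that $\eta$ intertwines $\Psi$ through the natural isomorphisms $\epsilon_\theta$ and $\epsilon_\gamma$ — is exactly where the hypothesis $\Phi(\eta_A)=\eta_{\Phi(A)}$ on a factored natural transformation is used. Since $\op{tot}$ is a functor, we obtain $\eta^f_E:=\op{tot}(\eta_{E^{-1}},\eta_{E^0})\colon\theta^f(E)\to\gamma^f(E)$.

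\textbf{Naturality in $E$.} For a morphism $g\colon E\to F$ in $\op{Fact}(w)$, the identity $\gamma^f(g)\circ\eta^f_E=\eta^f_F\circ\theta^f(g)$ is verified componentwise: it reduces to the naturality squares of $\eta$ for $g^{-1}$ and $g^0$, and functoriality of $\op{tot}$ passes the identity to the totalizations. This yields the natural transformation $\eta^f\colon\theta^f\to\gamma^f$ of functors $\op{Fact}(w)\to\op{Fact}(v)$, which visibly descends to $\op{K}(\op{Fact}\, w)\to\op{K}(\op{Fact}\, v)$ since $\eta^f$ is additive and preserves homotopies (again a componentwise check).

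\textbf{Descent to derived categories.} Injective objects are $\theta$-adapted and $\gamma$-adapted (dually for projectives), so the intersection $\mathcal Adp_{\theta}\cap\mathcal Adp_{\gamma}$ still satisfies the hypotheses of Proposition~\ref{proposition: adapted co-replacement} (respectively Proposition~\ref{proposition: adapted contra-replacement} or Proposition~\ref{proposition: adapted abs-replacement}). Hence both $\mathbf R\theta^f$ and $\mathbf R\gamma^f$ may be computed using one common class of adapted resolutions, and the quasi-inverse $\op{Q}^{-1}_{\mathcal Adp_{\theta}\cap\mathcal Adp_{\gamma}}$ can be taken to be literally the same on both sides. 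Because $\theta^f$ and $\gamma^f$ send totally acyclic (resp.\ co-acyclic, contra-acyclic) factorizations with adapted components to the corresponding acyclic factorizations — this is what makes $\mathbf R\theta^f$ and $\mathbf R\gamma^f$ well-defined in the first place — and because $\eta^f$ commutes with $\op{tot}$, hence with cones and with (co)products, the restriction of $\eta^f$ to $\op{K}(\op{Fact}\,(\mathcal Adp_{\theta}\cap\mathcal Adp_{\gamma}),w)$ descends to a natural transformation between the induced functors on the co-derived (resp.\ contra-derived, absolute) category of adapted factorizations. Precomposing with the common equivalence $\op{Q}^{-1}$ gives $\mathbf R\eta^f\colon\mathbf R\theta^f\to\mathbf R\gamma^f$; concretely its value at a factorization $X$ is $\eta^f_I$ for a chosen adapted resolution of $X$, and independence of that choice follows since any two adapted resolutions of $X$ are linked by a comparison morphism over which $\eta^f$ is natural. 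The statement for $\mathbf L\eta^f$ is obtained by passing to the opposite category.

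\textbf{Main obstacle.} The only genuine subtlety is the last step: one must confirm that $\eta^f$ really descends through the Verdier quotient and that $\mathbf R\theta^f$ and $\mathbf R\gamma^f$ can be realized on a single adapted subcategory so that the non-canonical quasi-inverses $\op{Q}^{-1}$ are chosen compatibly; once that bookkeeping is in place, everything reduces to componentwise naturality squares of $\eta$ combined with the functoriality and exactness of totalization established in Definition~\ref{definition: totalizations}.
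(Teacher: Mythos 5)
Your proposal is correct and follows essentially the same route as the paper: define $\eta^f$ componentwise, observe that naturality of $\eta$ handles the squares involving $\theta(\phi^i_E)$ and $\gamma(\phi^i_E)$ while the factored condition $\Phi(\eta_A)=\eta_{\Phi(A)}$ handles compatibility with the autoequivalences, apply functoriality of $\op{tot}$, and obtain the derived version by restricting to adapted (e.g.\ injective or projective) factorizations. Your treatment of the descent through the Verdier quotient is more explicit than the paper's one-line remark, but it is the same argument.
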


\begin{proof}
It suffices to check that we have a natural transformation 
\begin{align*}
 \eta^f & : \theta^f \to \gamma^f \\
 \eta^f_E & := (\eta_{E^{-1}}, \eta_{E^0})
\end{align*}
as the natural transformation between the derived functors will come from restriction to injective factorizations. 

One easily checks that the dashed arrows in the following diagram can be filled in with the natural morphisms
\[
\eta_{\Phi^{-1}(E^0)}, \eta_{E^{-1}}, \eta_{E^0}, \eta_{\Phi^{-1}(F^0)}, \eta_{F^{-1}}, \eta_{F^0}.
\]
The requirement for the diagram to commute is equivalent to $\eta$ being factored.

Commutativity of this diagram exactly means that these morphisms define a natural transformation with the components lying in $\op{Ch}^b(\mathcal B)$.  Since $\op{tot}$ is a functor, we may apply it to the entire commutative diagram, to obtain such a diagram in $\mathcal B$.  The commutativity of the induced diagram in $\mathcal B$ is equivalent to verifying that these maps induce a natural transformation.

\begin{tikzpicture}[scale=0.8, every node/.style={scale=0.8},
back line/.style={densely dotted},is equiv
cross line/.style={preaction={draw=white, -,
line width=6pt}}]
\matrix (m) [ampersand replacement=\&, matrix of math nodes,
row sep=3em, column sep=3em,
text height=1.5ex,
text depth=0.25ex]{
\& \& \Phi^{-1}(\theta(E^0)) \& \& \theta(F^0) \&\\
\&\theta(E^{-1}) \& \& \theta(F^{-1}) \& \&\\
\theta(E^0) \& \&  \theta(F^0) \& \& \&\\
\& \& \& \Phi^{-1}(\gamma(E^0)) \& \& \Phi^{-1}(\gamma(F^0)) \\
\& \&\gamma(E^{-1}) \& \& \gamma(F^{-1}) \&\\
\& \gamma(E^0) \& \&  \gamma(F^0) \& \&\\
};
\path[->]
(m-1-3) edge node [auto] {$ \theta(\phi^{-1}(g^0)) $} (m-1-5)
(m-1-3) edge node [left, above] {$ \theta(\phi_E^{-1})\,\,\,\,\,\,\, $} (m-2-2)
(m-1-5) edge node [auto] {$ \theta(\phi_F^{-1}) $} (m-2-4)
(m-2-2) edge node [auto] {$ \theta(g^1) $}  (m-2-4)
(m-2-2) edge node [left, above] {$ \theta(\phi_E^{0}) \,\,\,\,\,\,\,$} (m-3-1)
(m-2-4) edge node [auto] {$ \theta(\phi_F^{0}) $} (m-3-3)
(m-3-1) edge node [auto] {$ \theta(g^0) $}  (m-3-3)
(m-4-4) edge node [auto] {$ \gamma(\phi^{-1}(g^0)) $}  (m-4-6)
(m-4-4) edge node [left, above] {$ \gamma(\phi_E^{-1}) \,\,\,\,\,\,\,$} (m-5-3)
(m-4-6) edge node [auto] {$ \gamma(\phi_F^{-1}) $} (m-5-5)
(m-5-3) edge node [auto] {$ \gamma(g^1) $} (m-5-5)
(m-5-3) edge node [left, above] {$ \gamma(\phi_E^{0}) \,\,\,\,\,\,\,$} (m-6-2)
(m-5-5) edge node [auto] {$ \gamma(\phi_F^{0}) $} (m-6-4)
(m-6-2) edge node [auto] {$ \gamma(g^0) $}  (m-6-4)
(m-1-3) edge [back line] (m-4-4)
(m-2-2) edge [back line]  (m-5-3)
(m-3-1) edge [back line] (m-6-2)
(m-1-5) edge [back line]  (m-4-6)
(m-2-4) edge [back line]  (m-5-5)
(m-3-3) edge [back line]  (m-6-4);
\end{tikzpicture}
\end{proof}

\begin{lemma} \label{lem: equal functors}
 Assume that $\mathcal A$ has enough injectives and that coproducts of injectives remain injective.
 Let
 \[
  \theta, \gamma : \mathcal B \to \op{Ch}^{\op{b}} (\mathcal C)
 \]	
 be left exact and let
 \[
  \eta: \theta \to \gamma
 \]
 be a factored natural transformation which induces a natural isomorphism
 \[
  \mathbf{R}\theta \cong \mathbf{R}\gamma,
 \]
 then $\mathbf{R}\eta^f$ induces a natural isomorphism,
 \[
  \mathbf{R}\theta^f \cong \mathbf{R}\gamma^f : \dcoFact{w} \to \dcoFact{v}.
 \]
 
 Assume that $\mathcal A$ has enough projectives and that products of projectives remain projectives.
 Let
 \[
  \theta, \gamma : \mathcal B \to \op{Ch}^{\op{b}} (\mathcal C)
 \]	
 be right exact and let
 \[
  \eta: \theta \to \gamma
 \]
 be a factored natural transformation which induces a natural isomorphism
 \[
  \mathbf{L}\theta \cong \mathbf{L}\gamma,
 \]
 then $\mathbf{L}\eta^f$ induces a natural isomorphism,
 \[
  \mathbf{L}\theta^f \cong \mathbf{L}\gamma^f : \dconFact{w} \to \dconFact{v}.
 \]
 
 Assume that $\mathcal A$ has finite injective/projective dimension. Let
 \[
  \theta, \gamma : \mathcal B \to \op{Ch}^{\op{b}} (\mathcal C)
 \]	
 be right/left exact and let
 \[
  \eta: \theta \to \gamma
 \]
 be a factored natural transformation which induces a natural isomorphism
 \begin{align*}
  \mathbf{R}\theta \cong \mathbf{R}\gamma / \mathbf{L}\theta \cong \mathbf{L}\gamma
 \end{align*}
 then $\mathbf{R}\eta^f/\mathbf{L}\eta^f$ induces a natural isomorphism,
 \[
  \mathbf{R}\theta^f \cong \mathbf{R}\gamma^f/\mathbf{L}\theta^f \cong \mathbf{L}\gamma^f : \dabsFact{w} \to \dabsFact{v}
 \] 
\end{lemma}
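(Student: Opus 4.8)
The plan is to evaluate the induced natural transformation on factorizations with injective (respectively projective) components, where the derived functor is computed without deriving, and there to recognize the mapping cone as a folding of a pair of bounded exact complexes, so that Lemmas~\ref{lemma: cones over foldings are folded} and \ref{lemma: acyclic folds to acyclic} finish the job.

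First I would note that by Lemma~\ref{lem: natural} the factored natural transformation $\eta$ already induces $\mathbf{R}\eta^f\colon\mathbf{R}\theta^f\to\mathbf{R}\gamma^f$ (and $\mathbf{L}\eta^f$ in the right exact case), so the only thing to prove is that it is an isomorphism, which can be checked on objects. By Corollary~\ref{corollary: homotopy category of inj/proj is derived cat}, under the hypotheses of the first part every object of $\dcoFact{w}$ is isomorphic to a factorization $I$ whose components $I^{-1}$ and $I^0$ are injective; since injective objects are $\theta$- and $\gamma$-adapted, applying $\mathbf{R}\theta^f$ to $I$ simply returns $\theta^f(I)$, similarly $\mathbf{R}\gamma^f(I)=\gamma^f(I)$, and $\mathbf{R}\eta^f_I=\eta^f_I=(\eta_{I^{-1}},\eta_{I^0})$. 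Thus it suffices to show $\eta^f_I$ becomes an isomorphism in $\dcoFact{v}$ for every factorization $I$ with injective components.

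Next I would unwind the definitions of $\theta^f$ and $\eta^f$. By construction $\theta^f(I)$ is a $\bigoplus$-folding of the pair of bounded complexes $(\theta(I^{-1})_\bullet,\theta(I^0)_\bullet)$ — the factorization structure inside each chain degree supplies the diagonal part of the folding morphisms, the differentials of $\theta(I^{-1})_\bullet$ and $\theta(I^0)_\bullet$ supply the adjacent-degree part, and all other parts vanish since $\theta$ lands in chain complexes with a single differential — and likewise $\gamma^f(I)$ folds $(\gamma(I^{-1})_\bullet,\gamma(I^0)_\bullet)$. The morphism $\eta^f_I=(\eta_{I^{-1}},\eta_{I^0})$ has no off-diagonal components, so it satisfies the vanishing condition~\eqref{equation: vanishing condition}, and its diagonal parts are precisely the chain maps $\eta_{I^{-1}}\colon\theta(I^{-1})_\bullet\to\gamma(I^{-1})_\bullet$ and $\eta_{I^0}\colon\theta(I^0)_\bullet\to\gamma(I^0)_\bullet$. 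Lemma~\ref{lemma: cones over foldings are folded} then shows that $\op{Cone}(\eta^f_I)$ folds the pair of mapping cones $(\op{Cone}(\eta_{I^{-1}})_\bullet,\op{Cone}(\eta_{I^0})_\bullet)$. Since $I^{-1}$ and $I^0$ are injective, hence adapted, the hypothesis that $\eta$ induces $\mathbf{R}\theta\cong\mathbf{R}\gamma$ says exactly that $\eta_{I^{-1}}$ and $\eta_{I^0}$ are quasi-isomorphisms; their mapping cones are therefore bounded and exact, so by Lemma~\ref{lemma: acyclic folds to acyclic} the factorization $\op{Cone}(\eta^f_I)$ is totally-acyclic, hence zero in $\dcoFact{v}$. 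Consequently $\eta^f_I$ is an isomorphism in $\dcoFact{v}$, which proves the co-derived assertion.

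The contra-derived statement is dual: passing to opposite categories turns injective factorizations into projective ones, turns the equivalence $\op{Q}_{\op{inj}}$ into $\op{Q}_{\op{proj}}$, and turns the $\bigoplus$-folding of Theorem~\ref{theorem: strictification injective} together with the coproduct versions of Lemmas~\ref{lemma: cones over foldings are folded} and \ref{lemma: acyclic folds to acyclic} into their $\prod$-versions; the argument is otherwise identical and yields that $\op{Cone}(\eta^f_I)$ is totally-acyclic, hence zero in $\dconFact{v}$. The two absolute cases run the same way, this time using the realization of $\dabsFact{w}$ by injective, respectively projective, factorizations furnished by Corollary~\ref{corollary: homotopy category of inj/proj is derived cat} under the finite injective, respectively projective, dimension hypothesis, and again concluding that $\op{Cone}(\eta^f_I)$ is totally-acyclic, hence zero in $\dabsFact{v}$. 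I expect the one step that actually requires care to be the first sentence of the third paragraph: checking against the combinatorial definitions of a $\bigoplus$-folding and of the vanishing condition~\eqref{equation: vanishing condition} that $\theta^f(I)$ and $\eta^f_I$ really do have the asserted forms, which amounts to tracking the autoequivalence shifts and the signs entering the two totalizations; granting that bookkeeping, the rest is formal.
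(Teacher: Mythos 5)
Your argument is correct and rests on the same two key lemmas as the paper's proof (Lemma~\ref{lemma: cones over foldings are folded} and Lemma~\ref{lemma: acyclic folds to acyclic}); the only genuine difference is the reduction step. The paper restricts, via Theorem~\ref{theorem: strictification injective}, to factorizations $I$ that fold a pair of bounded-below injective resolutions, so the complexes folded by $\op{Cone}(\eta^f_I)$ are bounded-below acyclic and the conclusion is that the cone is co-acyclic. You instead restrict, via Corollary~\ref{corollary: homotopy category of inj/proj is derived cat}, to factorizations whose two components are injective objects, so the folded complexes are the bounded acyclic cones of $\eta_{I^{-1}}\colon\theta(I^{-1})\to\gamma(I^{-1})$ and $\eta_{I^0}\colon\theta(I^0)\to\gamma(I^0)$ in $\op{Ch}^{\op{b}}(\mathcal C)$, and you obtain the stronger conclusion that $\op{Cone}(\eta^f_I)$ is totally acyclic. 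Your route is marginally cleaner: it avoids commuting $\theta$ past the infinite coproducts appearing in the components of a folded injective resolution, and Lemma~\ref{lemma: acyclic folds to acyclic} is invoked only in its bounded (hence easiest) case. The point you flag yourself --- that $\theta^f(I)$ really is a $\bigoplus$-folding of the pair $(\theta(I^{-1}),\theta(I^0))$ and that $\eta^f_I$ satisfies the vanishing condition~\eqref{equation: vanishing condition} --- is precisely the bookkeeping the paper's proof also relies on, and it does check out.
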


\begin{proof}
 We prove the first statement as the rest are extremely similar and follow from analogous arguments. By Theorem~\ref{theorem: strictification injective}, we may restrict our attention to factorizations $I$ folding pairs of bounded below injective complexes
 \begin{align*}
  0 & \to I_0^{-1} \to I^{-1}_1 \to \cdots \\
  0 & \to I_0^0 \to I^0_1 \to \cdots.
 \end{align*}
 Applying  $\eta: \theta \to \gamma$ to each of these complexes yields a quasi-isomorphism by assumption. Thus, the two complexes $\op{Cone}(\eta_{I_{\bullet}^{-1}}), \op{Cone}(\eta_{I_{\bullet}^{0}})$ are acyclic in the usual sense, they have no homology. Since the components of $\eta_{p,q}$ vanish for $p \not = q$, we can apply Lemma~\ref{lemma: cones over foldings are folded} to conclude that
 \begin{displaymath}
  \eta^f_I: \theta^f(I) \to \gamma^f(I)
 \end{displaymath}
 folds the two complexes $\op{Cone}(\eta_{I_{\bullet}^{-1}}), \op{Cone}(\eta_{I_{\bullet}^{0}})$. By Lemma~\ref{lemma: acyclic folds to acyclic}, $\op{Cone}(\eta^f_I)$ is co-acylic hence $\eta^f_I$ is a co-quasi-isomorphism.
\end{proof}

\begin{corollary} \label{cor: equivalences}
Suppose that $\mathcal A, \mathcal B$ have small coproducts and enough injectives, and that coproducts of injectives are injective. Let 
\[
 \theta :  \mathcal A \to  \mathcal B
\]
and
\[
 \gamma : \mathcal B \to  \mathcal A,
\]
be left-exact factored functors that commute with coproducts. Assume that there are factored natural transformations
\begin{displaymath}
 \eta: \op{Id}_{\mathcal A} \to \gamma \circ \theta, \delta: \op{Id}_{\mathcal B} \to \theta \circ \gamma. 
\end{displaymath}
inducing isomorphisms at the derived level. 

Assume further that there are enough $\theta$-adapted objects whose image under $\theta$ is $\gamma$-adapted and that there are enough $\gamma$-adapted objects whose image under $\gamma$ is $\theta$-adapted. Then the right derived factored functor
\[
 \mathbf{R}\theta^f: \dcoFact{w} \to \dcoFact{v}.
\]
is an equivalence.
\end{corollary}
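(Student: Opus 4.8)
The plan is to exhibit $\mathbf{R}\gamma^f$ as a quasi-inverse to $\mathbf{R}\theta^f$ and to verify that the two composites are naturally isomorphic to the identity by bootstrapping the corresponding fact for ordinary derived functors. First I would note that, under the stated hypotheses, all four functors $\theta^f,\gamma^f,\mathbf{R}\theta^f : \dcoFact{w} \to \dcoFact{v}$ and $\mathbf{R}\gamma^f : \dcoFact{v} \to \dcoFact{w}$ are defined, since injectives are adapted and coproducts of injectives are injective, so there are enough adapted factorizations and the equivalences $\op{Q}_{\mathcal Adp_\theta}$, $\op{Q}_{\mathcal Adp_\gamma}$ of Proposition~\ref{proposition: adapted co-replacement} exist. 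Using the extra hypothesis that $\theta$-adapted factorizations can be chosen with $\gamma$-adapted image and symmetrically, Corollary~\ref{cor: composition of derived functors} applies and gives natural isomorphisms
\[
\mathbf{R}\gamma^f \circ \mathbf{R}\theta^f \cong \mathbf{R}(\op{tot}(\gamma \circ \theta))^f, \qquad \mathbf{R}\theta^f \circ \mathbf{R}\gamma^f \cong \mathbf{R}(\op{tot}(\theta \circ \gamma))^f .
\]
Because $\gamma \circ \theta$ and $\theta \circ \gamma$ take values in $\mathcal A$ and $\mathcal B$ (viewed in $\op{Ch}^{\op{b}}$ concentrated in degree $0$), the totalizations are trivial, so the right-hand sides are simply $\mathbf{R}(\gamma\circ\theta)^f$ and $\mathbf{R}(\theta\circ\gamma)^f$.

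The core of the argument is then to identify $\mathbf{R}(\gamma\circ\theta)^f$ with the identity. The factored natural transformation $\eta : \op{Id}_{\mathcal A} \to \gamma \circ \theta$, regarded as a factored natural transformation between the two left-exact factored functors $\op{Id}_{\mathcal A}, \gamma\circ\theta : \mathcal A \to \op{Ch}^{\op{b}}(\mathcal A)$, induces by hypothesis a natural isomorphism of the associated ordinary derived functors. Hence Lemma~\ref{lem: equal functors} (first statement) applies and upgrades $\eta$ to a natural isomorphism
\[
\mathbf{R}\eta^f : \mathbf{R}(\op{Id}_{\mathcal A})^f \;\cong\; \mathbf{R}(\gamma \circ \theta)^f
\]
of endofunctors of $\dcoFact{w}$. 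Now $\op{Id}_{\mathcal A}^f = \op{Id}_{\op{Fact}(w)}$, and every factorization is $\op{Id}_{\mathcal A}$-adapted, so $\mathcal Adp_{\op{Id}_{\mathcal A}} = \mathcal A$ and $\op{Q}_{\mathcal Adp_{\op{Id}}}$ is the identity; therefore $\mathbf{R}(\op{Id}_{\mathcal A})^f$ is canonically the identity functor of $\dcoFact{w}$. Combining with the display above yields $\mathbf{R}\gamma^f \circ \mathbf{R}\theta^f \cong \op{Id}$. Running the identical argument with $\delta : \op{Id}_{\mathcal B} \to \theta \circ \gamma$ gives $\mathbf{R}\theta^f \circ \mathbf{R}\gamma^f \cong \op{Id}_{\dcoFact{v}}$, and hence $\mathbf{R}\theta^f$ is an equivalence with quasi-inverse $\mathbf{R}\gamma^f$.

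The main obstacle I anticipate is bookkeeping rather than anything conceptual: one has to check carefully that the phrase "inducing isomorphisms at the derived level" in the hypotheses really supplies exactly the input demanded by Lemma~\ref{lem: equal functors} (i.e.\ an isomorphism $\mathbf{R}\op{Id}_{\mathcal A} \cong \mathbf{R}(\gamma\circ\theta)$ compatible with the $\op{Q}_{\mathcal Adp}$-replacement), that the trivial totalizations introduce no indexing or sign discrepancies when one identifies $\op{tot}(\gamma\circ\theta)$ with $\gamma\circ\theta$, and that the identification $\mathbf{R}(\op{Id}_{\mathcal A})^f \cong \op{Id}$ is natural in the appropriate $2$-categorical sense so that the resulting isomorphisms of composites are genuinely natural transformations of triangulated functors. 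Everything past that point is a formal diagram chase with the equivalences from Section~\ref{section: basics} and the results of Section~\ref{section: applications}.
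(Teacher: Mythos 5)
Your proposal matches the paper's proof essentially step for step: compose via Corollary~\ref{cor: composition of derived functors}, then use the factored natural transformations $\eta$ and $\delta$ together with Lemma~\ref{lem: equal functors} to identify $\mathbf{R}(\gamma\circ\theta)^f$ and $\mathbf{R}(\theta\circ\gamma)^f$ with the respective identity functors. The additional bookkeeping you flag (trivial totalizations, $\mathbf{R}(\op{Id})^f \cong \op{Id}$) is handled implicitly in the paper and introduces no gap.
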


\begin{proof}
Consider the factored functor
\[
 (\gamma \circ \theta)^f : \mathcal{A} \to \mathcal A
\]
By Corollary~\ref{cor: composition of derived functors} there is a natural isomorphism,
\[
\mathbf{R}(\gamma \circ \theta)^f \cong \mathbf{R}\gamma^f \circ \mathbf{R}\theta^f : \dcoFact{w} \to \dcoFact{w}.
\]
Now,
$\mathbf{R} (\gamma \circ \theta)$ is naturally isomorphic to the identity on $\op{D}(A)$ by assumption.  Hence, by Lemma~\ref{lem: equal functors}, the right derived factor functor $\mathbf{R} (\gamma \circ \theta)^f$ is naturally isomorphic to the identity on $\dcoFact{w}.$

Similarly, $\mathbf{R} (\theta \circ \gamma)^f$ is naturally isomorphic to the identity on $\dcoFact{v}$.
\end{proof}

\begin{corollary}
Suppose that $\mathcal A, \mathcal B$ have small products and enough projectives, and that products of projectives are projective. Let 
\[
\theta :  \mathcal A \to  \mathcal B
\]
and
\[
\gamma : \mathcal B \to  \mathcal A,
\]
be right-exact factored functors which commute with products. Assume that there are factored natural transformations
\begin{displaymath}
 \eta: \op{Id}_{\mathcal A} \to \gamma \circ \theta, \delta: \op{Id}_{\mathcal B} \to \theta \circ \gamma. 
\end{displaymath}
inducing isomorphisms at the derived level. 

Assume further that there are enough $\theta$-adapted objects whose image under $\theta$ is $\gamma$-adapted and that there are enough $\gamma$-adapted objects whose image under $\gamma$ is $\theta$-adapted. Then the right derived factored functor
\[
 \mathbf{L}\theta^f: \dcoFact{w} \to \dcoFact{v}.
\]
is an equivalence.
\end{corollary}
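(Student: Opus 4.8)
The plan is to transcribe the proof of Corollary~\ref{cor: equivalences}, systematically replacing injectives by projectives, coproducts by products, and co-acyclicity by contra-acyclicity; equivalently, one applies Corollary~\ref{cor: equivalences} to the opposite categories $\mathcal A^{\op{op}}, \mathcal B^{\op{op}}$, under which the hypotheses here become precisely those there and $\dconFact{w}$ is identified with the co-derived category of factorizations over $(\mathcal A^{\op{op}}, \Phi^{-1}, w)$ (this identification is the one underlying the proof of Theorem~\ref{theorem: strictification projective}). In particular the target of $\mathbf{L}\theta^f$ is the contra-derived category $\dconFact{v}$.

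First I would apply the right-exact half of Corollary~\ref{cor: composition of derived functors} to the composites $\gamma\circ\theta$ and $\theta\circ\gamma$. The hypothesis that there are enough $\theta$-adapted factorizations whose image under $\theta$ is $\gamma$-adapted, and symmetrically for $\gamma$, is exactly what is needed to form $\mathbf{L}(\op{tot}(\gamma\circ\theta))^f$ and $\mathbf{L}(\op{tot}(\theta\circ\gamma))^f$ and to obtain natural isomorphisms
\[
 \mathbf{L}(\op{tot}(\gamma\circ\theta))^f \cong \mathbf{L}\gamma^f\circ\mathbf{L}\theta^f, \qquad \mathbf{L}(\op{tot}(\theta\circ\gamma))^f \cong \mathbf{L}\theta^f\circ\mathbf{L}\gamma^f,
\]
of endofunctors of $\dconFact{w}$ and $\dconFact{v}$ respectively. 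Next, since $\mathbf{L}(\gamma\circ\theta)$ is naturally isomorphic to $\op{Id}_{\op{D}(\mathcal A)}$ by assumption, I would invoke the projective (second) half of Lemma~\ref{lem: equal functors}, whose hypotheses — enough projectives and products of projectives projective — hold for $\mathcal A$, to conclude $\mathbf{L}(\op{tot}(\gamma\circ\theta))^f \cong \op{Id}_{\dconFact{w}}$, and likewise $\mathbf{L}(\op{tot}(\theta\circ\gamma))^f \cong \op{Id}_{\dconFact{v}}$. Combining the displayed isomorphisms with these identifications exhibits $\mathbf{L}\gamma^f$ as an inverse to $\mathbf{L}\theta^f$ up to natural isomorphism, so $\mathbf{L}\theta^f$ is an equivalence.

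The only point that is not purely formal is the bookkeeping in the first step: one must check that the adapted subcategory attached to $\op{tot}(\gamma\circ\theta)$ satisfies the hypotheses of Proposition~\ref{proposition: adapted contra-replacement}, so that its left contra-derived factored functor is defined, and that the two orders of totalization in $\gamma^f\circ\theta^f$ versus $(\op{tot}(\gamma\circ\theta))^f$ agree up to isomorphism. Both are handled exactly as in Corollary~\ref{cor: composition of derived functors}: projective factorizations are automatically $\theta$-adapted for right-exact $\theta$, and a projective factorization landing in $\gamma$-adapted territory suffices. I therefore expect no new difficulty beyond the careful dualization of the cited statements, which is the main — and only mild — obstacle.
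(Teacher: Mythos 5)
Your proposal is correct and takes essentially the same route as the paper, whose entire proof is to pass to the opposite categories and invoke Corollary~\ref{cor: equivalences}; your explicit dualization of the intermediate steps (Corollary~\ref{cor: composition of derived functors} and Lemma~\ref{lem: equal functors}) just fills in what that one-line argument leaves implicit. You are also right that the stated target should be the contra-derived category $\dconFact{v}$ rather than $\dcoFact{v}$.
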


\begin{proof}
 We may replace $\mathcal A$ and $\mathcal B$ by their opposite categories and apply Corollary~\ref{cor: equivalences}
\end{proof}


We now extend these statements to fully-faithful functors, in somewhat greater generality.

\begin{lemma} \label{lemma: ff left-exact}
 Assume that $\mathcal B$ has finite injective dimension. Let $E$ and $F$ be objects of $\op{Fact}(w)$ whose components have finite injective dimension.  If the map,
 \begin{displaymath}
  \mathbf{R}\theta: \op{Hom}_{\op{D}(\mathcal A)}(E^i,F^j[t]) \to \op{Hom}_{\op{D}(\mathcal B)}(\mathbf{R}\theta(E^i),\mathbf{R}\theta(F^j)[t]),
 \end{displaymath}
 is an isomorphism for all $i,j,t \in \Z$, then the map,
 \begin{displaymath}
  \mathbf{R}\theta^f: \op{Hom}_{\dcoFact{w}}(E,F[t]) \to \op{Hom}_{\dcoFact{v}}(\mathbf{R}\theta^f(E),\mathbf{R}\theta^f(F)[t]),
 \end{displaymath}
 is an isomorphism for all $t \in \Z$ 
\end{lemma}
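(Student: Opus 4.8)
The strategy is to compare, via $\mathbf{R}\theta^f$, the spectral sequence of Lemma~\ref{lemma: spectral sequence injective} computing $\op{Hom}_{\dcoFact{w}}(E,F[\bullet])$ with an analogous one computing $\op{Hom}_{\dcoFact{v}}(\mathbf{R}\theta^f E,\mathbf{R}\theta^f F[\bullet])$, and then to use that the assumed isomorphism $\mathbf{R}\theta$ is exactly the induced map on $E_1$-pages. Throughout I use the standing hypotheses that make $\mathbf{R}\theta$ and $\mathbf{R}\theta^f$ exist (enough injectives in $\mathcal A$ and $\mathcal B$, coproducts of injectives injective, $\theta$ left exact and commuting with coproducts).

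First I would set up the source side. Fix finite injective resolutions of $F^{-1}$ and $F^{0}$, form the folded injective resolution $I$ of $F$ from Theorem~\ref{theorem: strictification injective} with its co-quasi-isomorphism $F\to I$, and invoke Lemma~\ref{lemma: spectral sequence injective}: this produces a spectral sequence with $E_1$-page the displayed $\op{Ext}_{\mathcal A}$-groups, arising from the folding-degree filtration of the complex $\textbf{Hom}_w^*(E,I)$, which converges strongly to $\bigoplus_r\op{Hom}_{\dcoFact{w}}(E,F[r])$ because the components of $F$ have finite injective dimension.

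Next I would build the target side. Since injectives of $\mathcal A$ are $\theta$-adapted, $\mathbf{R}\theta^f F\cong\theta^f(I)$; and since $\theta$ is left exact, commutes with coproducts, and intertwines $(\Phi,w)$ with $(\Psi,v)$, the factorization $\theta^f(I)$ is a $\bigoplus$-folding of the bounded complexes $\theta(I^{-1}_\bullet)$, $\theta(I^{0}_\bullet)$, which represent $\mathbf{R}\theta(F^{-1})$, $\mathbf{R}\theta(F^{0})$ in $\op{D}(\mathcal B)$. Using that $\mathcal B$ has finite injective dimension, replace these by quasi-isomorphic bounded complexes of injectives $\mathcal J^{-1}_\bullet$, $\mathcal J^{0}_\bullet$; the comparison maps have bounded acyclic cones, so Lemmas~\ref{lemma: cones over foldings are folded} and~\ref{lemma: acyclic folds to acyclic} produce a folding $K$ of $(\mathcal J^{-1}_\bullet,\mathcal J^{0}_\bullet)$, necessarily with injective components, together with a quasi-isomorphism $\theta^f(I)\to K$. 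Then $\op{Hom}_{\dcoFact{v}}(\mathbf{R}\theta^f E,\mathbf{R}\theta^f F[\bullet])$ is computed by $\textbf{Hom}_v^*(\mathbf{R}\theta^f E,K)$, and its folding-degree filtration yields a spectral sequence whose $E_1$-page consists of the groups $\op{Hom}_{\op{D}(\mathcal B)}(\mathbf{R}\theta (E^{i}),\mathbf{R}\theta(\Phi^{-s}(F^{j}))[\cdot])$, where one uses that $\mathcal J^{j}_\bullet$ represents $\mathbf{R}\theta(F^{j})$ and that $\mathbf{R}\theta$ commutes with $\Psi$.

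Finally, $\mathbf{R}\theta^f$ carries the folding-filtered complex on the source to the one on the target and so induces a morphism of spectral sequences which on $E_1$-pages is precisely the map induced by $\mathbf{R}\theta$ between the $\op{Ext}$-groups of the components — an isomorphism by hypothesis. Since the hypothesis identifies the target $E_1$-page with the source one, which has only finitely many nonzero entries in each total degree, the target spectral sequence also converges strongly, and an isomorphism on $E_1$-pages of strongly convergent spectral sequences forces an isomorphism of abutments; this gives the claim for all $t$. I expect the main obstacle to be precisely the middle step: matching the two $E_1$-pages and constructing the comparison of spectral sequences — that is, verifying that $\mathbf{R}\theta^f$ transports the folding-filtered injective resolution of $F$ to one of $\mathbf{R}\theta^f F$ whose associated graded computes $\mathbf{R}\theta$ of the relevant Ext-groups (this is where the bounded re-resolution over $\mathcal B$, forced by $\theta$ not necessarily preserving injectivity, enters, and where the $\op{Ch}^{\op b}(\mathcal B)$-valued formalism for factored functors keeps the bookkeeping coherent, since on the target the natural ``components'' are complexes representing $\mathbf{R}\theta$ of the components of $E$ and $F$), and that the filtrations are respected by $\mathbf{R}\theta^f$. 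The remaining points — convergence and the passage from $E_1$ to abutments — are routine.
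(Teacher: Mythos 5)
Your proposal follows essentially the same route as the paper's proof: fold finite injective resolutions of the components via Theorem~\ref{theorem: strictification injective}, apply $\theta^f$, re-resolve the (no longer injective) components over $\mathcal B$ by bounded complexes of injectives and refold to get an injective factorization $K$ (the paper does this with an explicit Cartan--Eilenberg-type bicomplex and a second application of Theorem~\ref{theorem: strictification injective}), then compare the folding-degree filtrations on the two $\textbf{Hom}^*$ complexes, whose map of $E_1$-pages is exactly the hypothesis on $\mathbf{R}\theta$, and conclude by finiteness of the resolutions. The one adjustment is that the source-side filtered complex should be $\textbf{Hom}_w^*(I^E,I^F)$ with $E$ resolved as well (as in the paper), so that applying $\theta^f$ yields an honest map of filtered complexes into $\textbf{Hom}_v^*(\theta^f(I^E),K)$ with $\theta^f(I^E)$ representing $\mathbf{R}\theta^f E$; with only $\textbf{Hom}_w^*(E,I)$ as in Lemma~\ref{lemma: spectral sequence injective} there is no natural comparison map to the target side.
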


The proof of Lemma \ref{lemma: ff left-exact} will be a direct result of studying a spectral sequence associated to a filtration on morphism complexes, $\textbf{Hom}^*$. Before presenting it, let us first recall one method for computing the maps,
\begin{displaymath}
 \mathbf{R}\theta: \op{Hom}_{\op{D}(\mathcal A)}(A,A') \to \op{Hom}_{\op{D}(\mathcal B)}(\mathbf{R}\theta(A'), \mathbf{R}\theta(A)),
\end{displaymath}
on the ordinary derived categories. 

Let $C,D$ be chain complexes from $\mathcal A$. We have the chain complex,
\begin{displaymath}
 \textbf{Hom}_{\mathcal A}^n(C,D) = \prod_{j-i=n} \op{Hom}_{\mathcal A}(C^i,D^j), 
\end{displaymath}
with 
\begin{displaymath}
 d( \prod_i g^i: C^i \to D^{i+n} ) := \prod_i (d^{i+n+1}_D \circ g^i - (-1)^n g^{i+1} \circ d_C^{i+1}).
\end{displaymath}

First, we choose injective resolutions,
\begin{align*}
 0 & \to A' \overset{d_0'}{\to} I_0' \overset{d_1'}{\to} I_1' \overset{d_2'}{\to} I_2' \overset{d_3'}{\to} \cdots \\
 0 & \to A \overset{d_0}{\to} I_0 \overset{d_1}{\to} I_1 \overset{d_2}{\to} I_2 \overset{d_3}{\to} \cdots.
\end{align*}
Next, we construct a commutative diagram of bounded complexes,
\begin{center}
\begin{tikzpicture}[description/.style={fill=white,inner sep=2pt}]
 \matrix (m) [matrix of math nodes, row sep=3em, column sep=3em, text height=1.5ex, text depth=0.25ex]
 { \theta(I_0) & \theta(I_1) & \theta(I_2) & \theta(I_3) & \cdots \\
   J_{0,0} & J_{1,0} & J_{2,0} & J_{3,0} & \cdots \\ 
   J_{0,1} & J_{1,1} & J_{2,1} & J_{3,1} & \cdots \\ 
   J_{0,2} & J_{1,2} & J_{2,2} & J_{3,2} & \cdots \\
   \vdots & \vdots & \vdots & \vdots &  \\ };
 \path[->,font=\scriptsize]
 (m-1-1) edge node[above]{$\theta(d_1)$} (m-1-2) 
 (m-1-2) edge node[above]{$\theta(d_2)$} (m-1-3) 
 (m-1-3) edge node[above]{$\theta(d_3)$} (m-1-4)
 (m-1-4) edge node[above]{$\theta(d_4)$} (m-1-5) 
 
 (m-1-1) edge node[left]{$d_{0,0}^v$} (m-2-1) 
 (m-1-2) edge node[left]{$d_{1,0}^v$} (m-2-2) 
 (m-1-3) edge node[left]{$d_{2,0}^v$} (m-2-3)
 (m-1-4) edge node[left]{$d_{3,0}^v$} (m-2-4) 
 
 (m-2-1) edge node[above]{$d^h_{1,0}$} (m-2-2) 
 (m-2-2) edge node[above]{$d^h_{2,0}$} (m-2-3) 
 (m-2-3) edge node[above]{$d^h_{3,0}$} (m-2-4)
 (m-2-4) edge node[above]{$d^h_{4,0}$} (m-2-5) 
 
 (m-2-1) edge node[left]{$d_{0,1}^v$} (m-3-1) 
 (m-2-2) edge node[left]{$d_{1,1}^v$} (m-3-2) 
 (m-2-3) edge node[left]{$d_{2,1}^v$} (m-3-3)
 (m-2-4) edge node[left]{$d_{3,1}^v$} (m-3-4) 
 
 (m-3-1) edge node[above]{$d^h_{1,1}$} (m-3-2) 
 (m-3-2) edge node[above]{$d^h_{2,1}$} (m-3-3) 
 (m-3-3) edge node[above]{$d^h_{3,1}$} (m-3-4)
 (m-3-4) edge node[above]{$d^h_{4,1}$} (m-3-5) 
 
 (m-3-1) edge node[left]{$d_{0,2}^v$} (m-4-1) 
 (m-3-2) edge node[left]{$d_{1,2}^v$} (m-4-2) 
 (m-3-3) edge node[left]{$d_{2,2}^v$} (m-4-3)
 (m-3-4) edge node[left]{$d_{3,2}^v$} (m-4-4) 
 
 (m-4-1) edge node[above]{$d^h_{1,2}$} (m-4-2) 
 (m-4-2) edge node[above]{$d^h_{2,2}$} (m-4-3) 
 (m-4-3) edge node[above]{$d^h_{3,2}$} (m-4-4)
 (m-4-4) edge node[above]{$d^h_{4,2}$} (m-4-5) 
 
 (m-4-1) edge node[left]{$d_{0,3}^v$} (m-5-1) 
 (m-4-2) edge node[left]{$d_{1,3}^v$} (m-5-2) 
 (m-4-3) edge node[left]{$d_{2,3}^v$} (m-5-3)
 (m-4-4) edge node[left]{$d_{3,3}^v$} (m-5-4) 
 ;
\end{tikzpicture} 
\end{center}
where the rows and columns are exact complexes of bounded complexes, all squares commute, and each $J_{p,q}$ is a bounded complex of injectives. Form the associated total complex, $J=(J_{\bullet},d^{tot}_{\bullet})$, where
\begin{displaymath}
 J_u = \bigoplus_{r+s+t = u} J_{r,s,t}
\end{displaymath}
where $J_{r,s,t}= (J_{s,t})_r$, with the differential,
\begin{displaymath}
 d^{tot}_u: J_u \to J_u \oplus J_{u+1},
\end{displaymath}
is the product of the maps,
\begin{displaymath}
 J_{r,s,t} \overset{d_{r}^{J_{s,t}} \oplus (-1)^r d_{s+1,t}^h \oplus (-1)^{r+s} d^v_{s,t+1}}{\to} J_{r+1,s,t} \oplus J_{r,s+1,t} \oplus J_{r,s,t+1}.
\end{displaymath}
This comes with a map of chain complexes, $\op{tot}\theta(I) \to J$.

The map,\begin{displaymath}
 \mathbf{R}\theta: \op{Hom}_{\op{D}(\mathcal A)}(A',A[t]) \to \op{Hom}_{\op{D}(\mathcal B)}(\mathbf{R}\theta(A'), \mathbf{R}\theta(A)[t]),
\end{displaymath}
is the cohomology in degree $t$ of the map of chain complexes,
\begin{displaymath}
 \textbf{Hom}_{\mathcal A}^*(I',I) \overset{\theta}{\to} \textbf{Hom}_{\mathcal A}^*(\op{tot}\theta(I'),\op{tot}\theta(I)) \to \textbf{Hom}_{\mathcal A}^*(\op{tot}\theta(I'),J).
\end{displaymath}
With this recap fresh in our mind, let us proceed with the proof of Lemma \ref{lemma: ff left-exact}.

\begin{proof}[Proof of Lemma \ref{lemma: ff left-exact}]
 Choose finite injective resolutions of the components,
 \begin{align*}
  0 & \to E^{-1} \overset{d_0^{E^{-1}}}{\to} I_0^{E^{-1}} \overset{d_1^{E^{-1}}}{\to} I_1^{E^{-1}} \overset{d_2^{E^{-1}}}{\to} I_2^{E^{-1}} \overset{d_3^{E^{-1}}}{\to} \cdots \\
  0 & \to E^{0} \overset{d_0^{E^0}}{\to} I_0^{E^0} \overset{d_1^{E^0}}{\to} I_1^{E^0} \overset{d_2^{E^0}}{\to} I_2^{E^0} \overset{d_3^{E^0}}{\to} \cdots \\
  0 & \to F^{-1} \overset{d_0^{F^{-1}}}{\to} I_0^{F^{-1}} \overset{d_1^{F^{-1}}}{\to} I_1^{F^{-1}} \overset{d_2^{F^{-1}}}{\to} I_2^{F^{-1}} \overset{d_3^{F^{-1}}}{\to} \cdots \\
  0 & \to F^{0} \overset{d_0^{F^0}}{\to} I_0^{F^0} \overset{d_1^{F^0}}{\to} I_1^{F^0} \overset{d_2^{F^0}}{\to} I_2^{F^0} \overset{d_3^{F^0}}{\to} \cdots,
 \end{align*}
 and apply Theorem \ref{theorem: strictification injective} to get resolutions of factorizations,
 \begin{align*}
  E & \to I^E \\
  F & \to I^F.
 \end{align*}
 
 Recall that the components of $I^F$ are
 \begin{align*}
  (I^F)^{-1} & = \bigoplus_{2l} \Phi^{-l}(I^{-1}_{2l}) \oplus \bigoplus_{2l+1} \Phi^{-l-1}(I^{0}_{2l+1}) \\
  (I^F)^{0} & = \bigoplus_{2l} \Phi^{-l}(I^{0}_{2l}) \oplus \bigoplus_{2l+1} \Phi^{-l}(I^{-1}_{2l+1}).
 \end{align*}
 Applying $\theta^f$, we get the factorization, $\theta^f(I^F)$, whose components are
 \begin{align*}
  \theta^f(I^F)^{-1} & = \bigoplus_{2l} \theta^f(\Phi^{-l}(I^{-1}_{2l})) \oplus \bigoplus_{2l+1} \theta^f(\Phi^{-l-1}(I^{0}_{2l+1})) \\
  \theta^f(I^F)^{0} & = \bigoplus_{2l} \theta^f(\Phi^{-l}(I^{0}_{2l})) \oplus \bigoplus_{2l+1} \theta^f(\Phi^{-l}(I^{-1}_{2l+1})).
 \end{align*}
 We want to replace $\theta^f(I^F)$ by an injective factorization to compute $\mathbf{R}\theta^f$.  We will apply Theorem \ref{theorem: strictification injective}, but, first, we need to choose injective resolutions of the components of $\theta^f(I^F)$. 
 
 To do this, we first construct finite diagrams (which exist by assumption),
 \begin{center}
 \begin{tikzpicture}[description/.style={fill=white,inner sep=2pt}]
  \matrix (m) [matrix of math nodes, row sep=3em, column sep=3em, text height=1.5ex, text depth=0.25ex]
  { \theta^f(I_0^{F^{-1}}) & \theta^f(I_1^{F^{-1}}) & \theta^f(I_2^{F^{-1}}) & \cdots \\
    J_{0,0}^{F^{-1}} & J_{1,0}^{F^{-1}} & J_{2,0}^{F^{-1}} & \cdots \\ 
    J_{0,1}^{F^{-1}} & J_{1,1}^{F^{-1}} & J_{2,1}^{F^{-1}} & \cdots \\ 
    \vdots & \vdots & \vdots &  \\ };
  \path[->,font=\scriptsize]
  (m-1-1) edge node[above]{$\theta(d_1^{F^{-1}})$} (m-1-2) 
  (m-1-2) edge node[above]{$\theta(d_2^{F^{-1}})$} (m-1-3) 
  (m-1-3) edge node[above]{$\theta(d_3^{F^{-1}})$} (m-1-4)
 
  (m-1-1) edge node[left]{$d_{0,0}^{v,F^{-1}}$} (m-2-1) 
  (m-1-2) edge node[left]{$d_{1,0}^{v,F^{-1}}$} (m-2-2) 
  (m-1-3) edge node[left]{$d_{2,0}^{v,F^{-1}}$} (m-2-3)
 
  (m-2-1) edge node[above]{$d^{h,F^{-1}}_{1,0}$} (m-2-2) 
  (m-2-2) edge node[above]{$d^{h,F^{-1}}_{2,0}$} (m-2-3) 
  (m-2-3) edge node[above]{$d^{h,F^{-1}}_{3,0}$} (m-2-4)
 
  (m-2-1) edge node[left]{$d_{0,1}^{v,F^{-1}}$} (m-3-1) 
  (m-2-2) edge node[left]{$d_{1,1}^{v,F^{-1}}$} (m-3-2) 
  (m-2-3) edge node[left]{$d_{2,1}^{v,F^{-1}}$} (m-3-3) 
 
  (m-3-1) edge node[above]{$d^{h,F^{-1}}_{1,1}$} (m-3-2) 
  (m-3-2) edge node[above]{$d^{h,F^{-1}}_{2,1}$} (m-3-3) 
  (m-3-3) edge node[above]{$d^{h,F^{-1}}_{3,1}$} (m-3-4)
 
  (m-3-1) edge node[left]{$d_{0,2}^{v,F^{-1}}$} (m-4-1) 
  (m-3-2) edge node[left]{$d_{1,2}^{v,F^{-1}}$} (m-4-2) 
  (m-3-3) edge node[left]{$d_{2,2}^{v,F^{-1}}$} (m-4-3)

  ;
  \end{tikzpicture} 
 \end{center}
 and
 \begin{center}
 \begin{tikzpicture}[description/.style={fill=white,inner sep=2pt}]
  \matrix (m) [matrix of math nodes, row sep=3em, column sep=3em, text height=1.5ex, text depth=0.25ex]
  { \theta^f(I_0^{F^{-1}}) & \theta^f(I_1^{F^{-1}}) & \theta^f(I_2^{F^{-1}}) & \cdots \\
    J_{0,0}^{F^{-1}} & J_{1,0}^{F^{-1}} & J_{2,0}^{F^{-1}} & \cdots \\ 
    J_{0,1}^{F^{-1}} & J_{1,1}^{F^{-1}} & J_{2,1}^{F^{-1}} & \cdots \\
    \vdots & \vdots & \vdots &  \\ };
  \path[->,font=\scriptsize]
  (m-1-1) edge node[above]{$\theta(d_1^{F^{0}})$} (m-1-2) 
  (m-1-2) edge node[above]{$\theta(d_2^{F^0})$} (m-1-3) 
  (m-1-3) edge node[above]{$\theta(d_3^{F^0})$} (m-1-4)
 
  (m-1-1) edge node[left]{$d_{0,0}^{v,F^0}$} (m-2-1) 
  (m-1-2) edge node[left]{$d_{1,0}^{v,F^0}$} (m-2-2) 
  (m-1-3) edge node[left]{$d_{2,0}^{v,F^0}$} (m-2-3)
 
  (m-2-1) edge node[above]{$d^{h,F^0}_{1,0}$} (m-2-2) 
  (m-2-2) edge node[above]{$d^{h,F^0}_{2,0}$} (m-2-3) 
  (m-2-3) edge node[above]{$d^{h,F^0}_{3,0}$} (m-2-4)
 
  (m-2-1) edge node[left]{$d_{0,1}^{v,F^0}$} (m-3-1) 
  (m-2-2) edge node[left]{$d_{1,1}^{v,F^0}$} (m-3-2) 
  (m-2-3) edge node[left]{$d_{2,1}^{v,F^0}$} (m-3-3)  
 
  (m-3-1) edge node[above]{$d^{h,F^0}_{1,1}$} (m-3-2) 
  (m-3-2) edge node[above]{$d^{h,F^0}_{2,1}$} (m-3-3) 
  (m-3-3) edge node[above]{$d^{h,F^0}_{3,1}$} (m-3-4)
  (m-3-1) edge node[left]{$d_{0,2}^{v,F^0}$} (m-4-1) 
  (m-3-2) edge node[left]{$d_{1,2}^{v,F^0}$} (m-4-2) 
  (m-3-3) edge node[left]{$d_{2,2}^{v,F^0}$} (m-4-3)
%
%
  ;
  \end{tikzpicture} 
 \end{center}
 where the rows and columns are exact, all $J$'s are bounded complexes of injectives, and all squares commute. 
 
 Then, we use the injective resolutions,
 \begin{align*}
  0 \to \theta^f(I^F)^{-1} \overset{\bigoplus_{2l} \Psi^{-l}(d_{0,2l}^{v,F^{-1}}) \oplus \bigoplus_{2l+1} \Psi^{-l-1}(d_{0,2l+1}^{v,F^{0}})}{\to} \bigoplus_{2l} \Psi^{-l}(J^{F^{-1}}_{0,2l}) \oplus \bigoplus_{2l+1} \Psi^{-l-1}(J^{F^{0}}_{0,2l+1}) \\ \overset{\bigoplus_{2l} \Psi^{-l}(d_{1,2l}^{v,F^{-1}}) \oplus \bigoplus_{2l+1} \Psi^{-l-1}(d_{1,2l+1}^{v,F^{0}})}{\to}  \bigoplus_{2l} \Psi^{-l}(J^{F^{-1}}_{1,2l}) \oplus \bigoplus_{2l+1} \Psi^{-l-1}(J^{F^{0}}_{1,2l+1}) \to  \cdots \\
 \end{align*}
 and
 \begin{align*}
  0 \to \theta^f(I^F)^{0} \overset{\bigoplus_{2l} \Psi^{-l}(d_{0,2l}^{v,F^{0}}) \oplus \bigoplus_{2l+1} \Psi^{-l}(d_{0,2l+1}^{v,F^{-1}})}{\to} \bigoplus_{2l} \Psi^{-l}(J^{F^{0}}_{0,2l}) \oplus \bigoplus_{2l+1} \Psi^{-l}(J^{F^{-1}}_{0,2l+1}) \\ \overset{\bigoplus_{2l} \Psi^{-l}(d_{1,2l}^{v,F^{0}}) \oplus \bigoplus_{2l+1} \Psi^{-l}(d_{1,2l+1}^{v,F^{-1}})}{\to}  \bigoplus_{2l} \Psi^{-l}(J^{F^{0}}_{1,2l}) \oplus \bigoplus_{2l+1} \Psi^{-l}(J^{F^{-1}}_{1,2l+1}) \to  \cdots \\
 \end{align*}
 totalize and apply Theorem \ref{theorem: strictification injective}. Denote the resulting factorization by $J$. Note that the components of $J$ are
 \begin{align*}
  J^{-1} & = \bigoplus_{r+s+t=2l} \Psi^{-l}(J^{F^{-1}}_{r,s,t}) \oplus \bigoplus_{r+s+t=2l+1} \Psi^{-l-1}(J^{F^0}_{r,s,t}) \\
  J^{0} & = \bigoplus_{r+s+t=2l} \Psi^{-l}(J^{F^0}_{r,s,t}) \oplus \bigoplus_{r+s+t=2l+1} \Psi^{-l}(J^{F^{-1}}_{r,s,t}).
 \end{align*}

 The chain complex, $\textbf{Hom}_v^*(\theta^f(I^E),J)$, admits a filtration,
 \begin{gather*}
  \mathcal F^p\textbf{Hom}_v^n(\theta^f(I^E),J) := \{ (g^{-1},g^0) \mid \forall q \\ 
  \begin{cases} 
  g^{-1} \left(\bigoplus_{u+v = 2l \leq q-1} \Psi^{-l}(\theta^f(I^{E^{-1}}_{v})_u) \oplus \bigoplus_{u+v = 2l+1 \leq q-1} \Psi^{-l-1}(\theta^f(I^{E^{0}}_{v})_u) \right) \\ \subseteq \bigoplus_{r+s+t = 2l \leq q+p+n-1} \Psi^{m-l}(J_{r,s,t}^{F^{-1}}) \oplus \bigoplus_{r+s+t = 2l+1 \leq q+p+n-1} \Psi^{m-l-1}(J_{r,s,t}^{F^{0}}) \\ 
  g^{0} \left(\bigoplus_{u+v = 2l \leq q} \Psi^{-l}(\theta^f(I^{E^{0}}_{v})_u) \oplus \bigoplus_{u+v = 2l+1 \leq q} \Psi^{-l}(\theta^f(I^{E^{-1}}_{v})_u) \right) \\ \subseteq \bigoplus_{r+s+t = 2l \leq q+p+n} \Psi^{m-l}(J_{r,s,t}^{F^{0}}) \oplus \bigoplus_{r+s+t = 2l+1 \leq t+q+n} \Psi^{m-l}(J_{r,s,t}^{F^{-1}}) \} & n=2m \\
  g^{-1} \left(\bigoplus_{u+v = 2l \leq q-1} \Psi^{-l}(\theta^f(I^{E^{-1}}_{v})_u) \oplus \bigoplus_{u+v = 2l+1 \leq q-1} \Psi^{-l-1}(\theta^f(I^{E^{0}}_{v})_u) \right) \\ \subseteq \bigoplus_{r+s+t = 2l \leq q+p+n-1} \Psi^{m-l}(J_{r,s,t}^{F^{0}}) \oplus \bigoplus_{r+s+t = 2l+1 \leq q+p+n-1} \Psi^{m-l}(J_{r,s,t}^{F^{-1}}) \\ 
  g^{0} \left(\bigoplus_{u+v = 2l \leq t} \Psi^{-l}(\theta^f(I^{E^{0}}_{v})_u) \oplus \bigoplus_{u+v = 2l+1 \leq t} \Psi^{-l}(\theta^f(I^{E^{-1}}_{v})_u) \right) \\ \subseteq \bigoplus_{r+s+t = 2l \leq q+p+n} \Psi^{m-l+1}(J_{r,s,t}^{F^{-1}}) \oplus \bigoplus_{r+s+t = 2l+1 \leq q+p+n} \Psi^{m-l}(J_{r,s,t}^{F^{0}}) \} & n=2m+1 
  \end{cases}
 \end{gather*}
 After recombining even and odd parts, the associated graded complex is
 \begin{gather*}
  \op{Gr}^p\textbf{Hom}_v^n(\theta^f(I^E),J) = \\
  \begin{cases} 
   \textbf{Hom}^{p+n}_{\mathcal B}(\op{tot}\theta^f(I^{E^0}),\Psi^{-q}(J^{F^0})) \oplus \textbf{Hom}^{p+n}_{\mathcal B}(\op{tot}\theta^f(I^{E^{-1}}),\Psi^{-q}(J^{F^{-1}})) & p=2q \\
   \textbf{Hom}^{p+n}_{\mathcal B}(\op{tot}\theta^f(I^{E^0}),\Psi^{-q}(J^{F^{-1}})) \oplus \textbf{Hom}^{p+n}_{\mathcal B}(\op{tot}\theta^f(I^{E^{-1}}),\Psi^{-q-1}(J^{F^{0}})) & p=2q+1
  \end{cases}
 \end{gather*}
 with the differential being the sum of the differentials on $\textbf{Hom}^{*}_{\mathcal B}(\op{tot}\theta^f(I^{E^{u}}),\Psi^{-q}(J^{F^v}))$, $u,v \in \{-1,0\}$.
 
 There exists an analogous filtration on $\textbf{Hom}_v^*(I^E,I^F)$ whose associated graded complex is 
 \begin{displaymath}
  \begin{cases} 
   \textbf{Hom}^{p+n}_{\mathcal A}(I^{E^0},\Phi^{-q}(I^{F^0})) \oplus \textbf{Hom}^{p+n}_{\mathcal A}(I^{E^{-1}},\Phi^{-q}(I^{F^{-1}})) & p=2q \\
   \textbf{Hom}^{p+n}_{\mathcal A}(I^{E^0}, \Phi^{-q}(I^{F^{-1}})) \oplus \textbf{Hom}^{p+n}_{\mathcal A}(I^{E^{-1}}, \Phi^{-q-1}(I^{F^{0}})) & p=2q+1
  \end{cases}
 \end{displaymath}
 These filtrations are compatible with the map,
 \begin{displaymath}
  \textbf{Hom}_v^*(I^E,I^F) \to \textbf{Hom}_v^*(\theta^f(I^E),J).
 \end{displaymath}
 The map on the associated graded complexes,
 \begin{displaymath}
  \op{Gr}^p\textbf{Hom}_v^*(I^E,I^F) \to \op{Gr}^p\textbf{Hom}_w^*(\theta^f(I^E),J^F),
 \end{displaymath}
 is exactly the sum of the maps,
 \begin{align*}
  \textbf{Hom}_{\mathcal A}^*(I^{E^0},\Phi^{-q}(I^{F^0})) & \to \textbf{Hom}_{\mathcal B}^*(\op{tot}\theta^f(I^{E^0}),\Psi^{-q}(J^{F^0})) \\
  \textbf{Hom}_{\mathcal A}^*(I^{E^0},\Phi^{-q}(I^{F^{-1}})) & \to \textbf{Hom}_{\mathcal B}^*(\op{tot}\theta^f(I^{E^0}),\Psi^{-q}(J^{F^{-1}})) \\
  \textbf{Hom}_{\mathcal A}^*(I^{E^{-1}},\Phi^{-q-1}(I^{F^0})) & \to \textbf{Hom}_{\mathcal B}^*(\op{tot}\theta^f(I^{E^{-1}}),\Psi^{-q-1}(J^{F^0})) \\
  \textbf{Hom}_{\mathcal A}^*(I^{E^{-1}},\Phi^{-q}(I^{F^{-1}})) & \to \textbf{Hom}_{\mathcal B}^*(\op{tot}\theta^f(I^{E^{-1}}),\Psi^{-q}(J^{F^{-1}})),
 \end{align*}
 which we have assumed to be quasi-isomorphisms. The corresponding map of spectral sequences is an isomorphism on the $E_1$-page. Since the injective resolutions are assumed to be finite, the spectral sequence degenerates and yields the desired statement.
\end{proof}

\begin{lemma} \label{lemma: ff right-exact}
 Let $E$ and $F$ be objects of $\op{Fact}(w)$. The map,
 \begin{displaymath}
  \mathbf{L}\theta^f: \op{Hom}_{\dconFact{w}}(E,F[t]) \to \op{Hom}_{\dconFact{v}}(\mathbf{L}\theta^f E,\mathbf{L}\theta^f F[t]),
 \end{displaymath}
 is an isomorphism for all $t\in \Z$ if the map
 \begin{displaymath}
  \mathbf{L}\theta: \op{Hom}_{\op{D}(\mathcal A)}(E^i,F^j[t]) \to \op{Hom}_{\op{D}(\mathcal B)}(\mathbf{L}\theta E^i,\mathbf{L}\theta F^j[t]),
 \end{displaymath}
 is an isomorphism for all $i,j,t \in \Z$.
\end{lemma}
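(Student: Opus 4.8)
The plan is to dualize the proof of Lemma~\ref{lemma: ff left-exact}: run the same argument with ``projective'' in place of ``injective'', with $\prod$-totalizations in place of $\bigoplus$-totalizations, and with Theorem~\ref{theorem: strictification projective} in place of Theorem~\ref{theorem: strictification injective} (equivalently, pass to the opposite categories $\mathcal A^{\mathrm{op}}$, $\mathcal B^{\mathrm{op}}$, under which $\theta$ becomes left exact, $\mathbf L\theta$ becomes $\mathbf R\theta$, $\dconFact{w}$ becomes the co-derived category, and projective resolutions become injective ones). Throughout I use the standing hypotheses that make $\mathbf L\theta^f$ well defined: $\mathcal A$ has small products and enough projectives, products of projectives are projective, $\theta$ is right exact and commutes with products, and there are enough $\theta$-adapted factorizations whose images are $\gamma$-suitable in the relevant sense.

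First I would choose projective resolutions of the components $E^{-1},E^0,F^{-1},F^0$, together with lifts of $\phi^{-1}_E,\phi^0_E,\phi^{-1}_F,\phi^0_F$ and the requisite null-homotopies, and invoke Theorem~\ref{theorem: strictification projective} to assemble them into $\prod$-folded projective factorizations $P^E\to E$ and $P^F\to F$ that are contra-quasi-isomorphisms. By Corollary~\ref{corollary: homotopy category of inj/proj is derived cat} together with Lemma~\ref{lemma: injective, projective fact orthogonal}, $\op{Hom}_{\dconFact{w}}(E,F[t])$ is the degree-$t$ cohomology of $\textbf{Hom}_w^*(P^E,P^F)$. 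Since $\theta$ need not carry projectives to projectives, $\theta^f(P^E)=\mathbf L\theta^f(E)$ is not projective, so next I would build a Cartan--Eilenberg-type resolution: for each component resolution $P^{E^i}_\bullet$ produce a double complex $J^{E^i}_{\bullet,\bullet}$ with exact rows and columns whose entries are bounded complexes of projectives of $\mathcal B$ and a quasi-isomorphism $\op{tot}\,\theta(P^{E^i}_\bullet)\to\op{tot}\,J^{E^i}_{\bullet,\bullet}$; totalizing these (using $\prod$) and feeding the result into Theorem~\ref{theorem: strictification projective} yields a projective factorization $J$ with a contra-quasi-isomorphism $J\to\theta^f(P^E)$. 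As recalled just before the proof of Lemma~\ref{lemma: ff left-exact}, the map $\mathbf L\theta^f$ on $\op{Hom}$-groups is then realized by the composite chain map $\textbf{Hom}_w^*(P^E,P^F)\to\textbf{Hom}_v^*(\theta^f(P^E),\theta^f(P^F))\to\textbf{Hom}_v^*(J,\theta^f(P^F))$.

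I would then filter both $\textbf{Hom}_w^*(P^E,P^F)$ and $\textbf{Hom}_v^*(J,\theta^f(P^F))$ by the filtrations dual to the one in the proof of Lemma~\ref{lemma: ff left-exact} (in the spirit of the filtration underlying Lemma~\ref{lemma: spectral sequence projective}), measuring how deep the image of a morphism sits in the $\prod$-decompositions of the target components; the comparison map above is filtered. After recombining even and odd parts, the associated graded of $\textbf{Hom}_w^*(P^E,P^F)$ splits as a sum of four complexes of the form $\textbf{Hom}_{\mathcal A}^*(P^{E^i},\Phi^{k}(P^{F^j}))$ (with $k$ dictated by parity), the associated graded of $\textbf{Hom}_v^*(J,\theta^f(P^F))$ splits as a sum of four complexes $\textbf{Hom}_{\mathcal B}^*(\op{tot}\,\theta(P^{E^i}),\Psi^{k}(\op{tot}\,J^{F^j}))$, and the induced map on $E_1$-pages is exactly the collection of maps $\mathbf L\theta\colon\op{Hom}_{\op{D}(\mathcal A)}(E^i,F^j[r])\to\op{Hom}_{\op{D}(\mathcal B)}(\mathbf L\theta E^i,\mathbf L\theta F^j[r])$, which are isomorphisms by hypothesis. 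Hence the comparison of spectral sequences is an isomorphism on $E_1$, and therefore on all later pages and on the abutments, which gives the claim.

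The step I expect to be the main obstacle is convergence. In Lemma~\ref{lemma: ff left-exact} the assumption of finite injective dimension made the Cartan--Eilenberg double complex bounded, so the spectral sequence degenerated after finitely many pages; here the projective resolutions are genuinely unbounded (already for a single factorization, because of two-periodicity) and the $\textbf{Hom}$-complexes involve infinite products, so strong convergence must be argued directly. The key point I would establish is that, because these filtrations are built from \emph{products} rather than coproducts, they are complete and Hausdorff, and regular in the sense that in each total degree only finitely many filtration steps are relevant to any given differential; the standard convergence theorem for the spectral sequence of a complete exhaustive filtration then applies and forces an isomorphism of abutments from an isomorphism on $E_1$. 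Checking completeness of these product filtrations on the morphism complexes, and that the comparison map respects it, is the delicate bookkeeping of the argument — and it is precisely this feature, unavailable in the co-derived setting, that explains why no finiteness hypotheses appear in the statement.
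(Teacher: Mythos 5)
Your approach is exactly the paper's: the paper's entire proof of this lemma is the single sentence that it is ``completely analogous'' to Lemma~\ref{lemma: ff left-exact}, i.e.\ one passes to the opposite categories, where projective resolutions become injective ones, $\prod$-foldings become $\bigoplus$-foldings, and $\dconFact{w}$ becomes a co-derived category, and then runs the injective argument verbatim. Your first three paragraphs carry out precisely that dualization, so on the main line of argument you and the paper agree.

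Where you go beyond the paper is the convergence discussion, and you have put your finger on a genuine issue. Lemma~\ref{lemma: ff left-exact} carries the hypotheses that $\mathcal B$ has finite injective dimension and that the components of $E$ and $F$ do as well; these are what make the Cartan--Eilenberg double complex bounded and force degeneration of the comparison spectral sequence. The honest dual of that argument therefore proves the present lemma only under the corresponding finite projective dimension hypotheses, which the statement omits (they do hold in the paper's geometric applications, as announced at the start of Section 5). Your proposal to dispense with them --- the filtrations in the contra-derived setting are built from products, hence complete and Hausdorff, so a comparison theorem for complete filtrations applies --- is the right idea for an unbounded statement, but as written it is only a sketch: the assertion that in each total degree only finitely many filtration steps matter is not automatic for unbounded resolutions, and one would need a genuine Boardman-type argument (conditional convergence plus vanishing of the relevant derived limits) to conclude an isomorphism of abutments from an isomorphism on $E_1$. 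If you prove the lemma at the level of generality the paper actually uses, simply add the dual finiteness hypotheses; your dualized argument then suffices as it stands.
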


\begin{proof}
 The proof of this lemma is completely analogous to the proof of Lemma \ref{lemma: ff left-exact}.
\end{proof}

\section{Geometric Applications}
In this section we apply our results to smooth varieties/stacks. The category of quasi-coherent sheaves on such a space has finite injective dimension.

\subsection{Complete Intersections as matrix factorizations over noncommutative algebras}

Let $X$ be a smooth algebraic variety over $k$, $\mathcal E$ be a vector bundle on $X$ and consider the associated geometric vector bundle
\[
 \op{V}(\mathcal E) := \op{Spec} \left( \op{Sym}^{\bullet} \mathcal E \right)
\]
together with a scaling action, $\mathbb{G}_m$, by the units of $k$. Suppose that $\op{V}(\mathcal E)$ admits a $\mathbb{G}_m$-equivariant tilting object $T$. Then $A := \op{End}(T)$ is a $\Z$-graded algebra. Let $\op{mod}_{\Z}A$ be the Abelian category of finitely generated graded modules over $A$.  Let $(1) : \op{mod}_{\Z}A \to \op{mod}_{\Z}A$ be the autoequivalence given by shifting the grading of a module
\[
M(1)_j := M_{j+1}.
\]
Now, let $s \in \op{H}^0(X,\mathcal E)$ be a global section and $Z$ be the zero locus of $s$ and consider the equivariant line bundle $\mathcal L$ obtained by pulling back the representation given by the scaling action on $k$. Then $s$ gives a map $T \to T \otimes \mathcal L$ and hence an element of $A$. We define a natural transformation
\[
v: \op{Id} \to (1)
\] 
given by the action of this element of $A$ on modules.

\begin{theorem}
Assume that the codimension of $Z$ equals the rank of $\mathcal E$. There is an equivalence of triangulated categories
\[
 \dbcoh{Z} \cong \op{K}(\op{Fact} \mathcal Proj , v).
\]
\end{theorem}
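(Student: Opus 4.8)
Write $Y := \op{V}(\mathcal E)$ with projection $\pi : Y \to X$, and let $W \in H^0(Y,\mathcal L)$ be the $\mathbb{G}_m$-homogeneous degree-one function determined by $s$ via the inclusion $\mathcal E \hookrightarrow \pi_*\mathcal O_Y = \op{Sym}^\bullet\mathcal E$; multiplication by $W$ gives a natural transformation $\op{Id}\to\mathcal L\otimes(-)$ on $\op{Qcoh}^{\mathbb{G}_m}Y$, again denoted $W$. Since $Y$ is smooth of finite dimension, $\op{Qcoh}^{\mathbb{G}_m}Y$ has small coproducts, enough injectives, finite injective dimension, and coproducts of injectives are injective, so every result of Sections~\ref{section: basics}--\ref{section: applications} applies to the triple $(\op{Qcoh}^{\mathbb{G}_m}Y,\ \mathcal L\otimes(-),\ W)$. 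The plan is: (i) identify $\op{D}^{\op{b}}(\op{coh} Z)$ with the absolute derived category of $W$-factorizations on $Y$; (ii) transport this along the tilting equivalence to an absolute derived category of $v$-factorizations over $A$; (iii) recognize that category as $\op{K}(\op{Fact}\,\mathcal Proj , v)$.

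For (i): the hypothesis $\op{codim}_X Z = \op{rank}\,\mathcal E$ says precisely that $s$ is a regular section, so the Koszul complex on $s$ is a resolution and, on $Y$, the critical locus of $W$ is the zero-section copy of $Z$. This is exactly the input to the comparison theorem identifying $\op{D}^{\op{b}}(\op{coh} Z)$ with matrix factorizations of $W$ (Isik, Shipman; see also \cite{BP}) in its $\mathbb{G}_m$-equivariant form, which yields
\[
 \op{D}^{\op{b}}(\op{coh} Z)\ \simeq\ \op{D}^{\op{abs}}\!\bigl(\op{Fact}(\op{coh}^{\mathbb{G}_m}Y,\ \mathcal L\otimes(-),\ W)\bigr);
\]
alternatively this equivalence can be produced here by folding the (bounded) Koszul resolution using Theorem~\ref{theorem: strictification injective} and Lemma~\ref{lemma: acyclic folds to acyclic}. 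By Corollary~\ref{corollary: homotopy category of inj/proj is derived cat} the right-hand side agrees with $\op{D}^{\op{co}}$ and with the corresponding homotopy category of injective-component factorizations.

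For (ii): put $\theta := \op{Hom}_Y(T,-) : \op{Qcoh}^{\mathbb{G}_m}Y \to \op{GrMod}\,A$. This is left exact and commutes with small coproducts (as $T$ is a vector bundle, hence a compact object), its right derived functor $\mathbf{R}\theta = \mathbf{R}\op{Hom}_Y(T,-)$ is the tilting equivalence $\op{D}^{\op{b}}(\op{coh}^{\mathbb{G}_m}Y)\simeq\op{D}^{\op{b}}(\op{mod}_\Z A)$, and in particular $A$ has finite global dimension because $Y$ is smooth. The natural isomorphism $\theta\circ(\mathcal L\otimes(-))\cong(1)\circ\theta$ together with $\theta(W_{\mathcal F}) = v_{\theta(\mathcal F)}$ holds by the very definition of $v$ as the action of the element $s\in A$, so $\theta$ is a factored functor for the triples $(\op{Qcoh}^{\mathbb{G}_m}Y,\ \mathcal L\otimes(-),\ W)$ and $(\op{GrMod}A,\ (1),\ v)$. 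Since $\mathbf{R}\theta$ is an equivalence it is an isomorphism on all $\op{Hom}$-spaces between the (finite-injective-dimension) components of any two factorizations, and $\op{GrMod}A$ has finite injective dimension, so Lemma~\ref{lemma: ff left-exact} makes $\mathbf{R}\theta^f$ fully faithful. For essential surjectivity, $\mathbf{R}\theta^f$ is exact and commutes with coproducts, so its essential image is a localizing subcategory; by Corollary~\ref{cor: single object generation} it is enough to hit each $(0,M,0,0)$ with $v_M = 0$, and a preimage is obtained by choosing (via the equivalence $\mathbf{R}\theta$) a bounded complex over $\op{coh}^{\mathbb{G}_m}Y$ mapping to $M$ --- on which $W$ then acts null-homotopically --- and folding it with Lemma~\ref{lemma: cones over foldings are folded} and Lemma~\ref{lemma: acyclic folds to acyclic}. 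Thus $\mathbf{R}\theta^f$ is an equivalence, and restricting to factorizations with coherent/finitely-generated components (using finite homological dimension everywhere) gives
\[
 \op{D}^{\op{abs}}\!\bigl(\op{Fact}(\op{coh}^{\mathbb{G}_m}Y,\ \mathcal L\otimes(-),\ W)\bigr)\ \simeq\ \op{D}^{\op{abs}}\!\bigl(\op{Fact}(\op{mod}_\Z A,\ (1),\ v)\bigr).
\]

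For (iii): $\op{mod}_\Z A$ has enough projectives (the shifts $A(n)$ and their summands) and, since $A$ has finite global dimension, finite projective dimension, so Corollary~\ref{corollary: homotopy category of inj/proj is derived cat} identifies the last category with $\op{K}(\op{Fact}\,\mathcal Proj , v)$. Concatenating (i)--(iii) proves the theorem. The step that needs the most care, and which I expect to be the main obstacle, is (ii): the tilting equivalence is naturally the derived functor of a left-exact functor whereas its quasi-inverse $(-)\otimes_A T$ is right exact, so Corollary~\ref{cor: equivalences} cannot be invoked verbatim and fully-faithfulness and essential surjectivity must be handled separately as above; a secondary point is supplying the $\mathbb{G}_m$-equivariant version of the geometric comparison theorem and tracking the grading shifts against $\mathcal L$ and $v$.
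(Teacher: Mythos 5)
Your skeleton coincides with the paper's: step (i) is exactly the appeal to Shipman/Isik giving $\dbcoh{Z}\cong\dabsFact{w}$, step (iii) is exactly Corollary~\ref{corollary: homotopy category of inj/proj is derived cat}, and in step (ii) fully-faithfulness of $\mathbf{R}\op{Hom}(T,-)^f$ is obtained, as in the paper, from Lemma~\ref{lemma: ff left-exact}. The genuine divergence is essential surjectivity. The paper sidesteps the mixed-exactness problem you correctly flag by replacing $T$ with a bounded complex of injectives, so that $\op{Hom}(T,-)$ becomes \emph{exact}; it then computes $\mathbf{R}\op{Hom}(T,-)^f\circ\mathbf{L}(-\otimes T)^f\cong\mathbf{L}(\op{Hom}(T,-\otimes T))^f\cong\op{Id}$ using Corollary~\ref{cor: composition of derived functors} and Lemma~\ref{lem: equal functors} applied to the unit $M\to\op{Hom}(T,M\otimes T)$ on projective modules. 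This buys an explicit quasi-inverse entirely within the already-proved functoriality package. Your alternative --- essential image is a triangulated subcategory, so by Corollary~\ref{cor: single object generation} it suffices to hit each $(0,M,0,0)$ with $v_M=0$ --- is a legitimate different route (and the appeal to coproducts/localizing subcategories is unnecessary, since triangular generation only needs closure under shifts and cones).

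However, the way you produce the preimage of $(0,M,0,0)$ has a gap. A bounded complex $C$ of coherent sheaves on $\op{V}(\mathcal E)$ on which $W$ acts null-homotopically does not, by itself, fold into a factorization: writing $h$ for the null-homotopy, one needs $dh+hd=W$ \emph{and} the higher coherences $\phi^{\pm}_{p,q}$ for $q<p-1$ (equivalently $h^2=0$ up to further correction terms), and neither Lemma~\ref{lemma: cones over foldings are folded} nor Lemma~\ref{lemma: acyclic folds to acyclic} supplies these --- they presuppose that foldings already exist. The inductive construction of the higher components is the content of the proof of Theorem~\ref{theorem: strictification injective} (see also Corollary~\ref{corollary: easier construction}), and it uses injectivity (dually, projectivity) of the terms to kill the successive obstructions. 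So your step is repairable --- take $C$ to be a bounded complex of injectives or locally frees and rerun that induction --- but as written the citation does not cover it, and this is precisely the work the paper's composition-of-derived-factored-functors argument is designed to avoid.
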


\begin{proof}
Using the triples $(\op{coh}_{\mathbb{G}_m}\op{V}(\mathcal E), ( - \otimes \mathcal L),  s)$ or $(\op{Qcoh}_{\mathbb{G}_m}\op{V}(\mathcal E), ( - \otimes \mathcal L),  s)$ we obtain a category of factorizations as in Example~\ref{ex: gauged LG model}. Since $\op{Qcoh}_{\mathbb{G}_m} \op{V}(\mathcal E)$ has enough injectives, we may assume that $T$ is a complex of injectives.
Then 
\[
\op{Hom}(T, -) : \op{coh}_{\mathbb{G}_m} \op{V}(\mathcal E) \to \op{Ch}^{\op{b}}(\op{Mod}_{\Z}A)
\]
is an exact factored functor which induces an equivalence
\[
\op{D}^{\op{b}}( \op{coh}_{\mathbb{G}_m} \op{V}(\mathcal E)) \to \op{D}^{\op{b}}(\op{mod}_{\Z}A).
\]
The inverse is also induced by a factored functor
\[
(- \otimes T) : \op{mod}_{\Z}A \to  \op{Ch}^{\op{b}}(\op{coh}_{\mathbb{G}_m}\op{V}(\mathcal E)).
\]
Note, we have slightly abused notation here, in that $\op{Ch}^{\op{b}}(\op{coh}_{\mathbb{G}_m}\op{V}(\mathcal E))$ is meant to be the full subcategory of $\op{Ch}(\op{Qcoh}_{\mathbb{G}_m}\op{V}(\mathcal E))$ with bounded and coherent cohomology.

There are natural isomorphisms
\begin{align*}
\mathbf{R}\op{Hom}(T, -)^f  \circ \ \mathbf{L}(- \otimes T )^f 
&  \cong \op{Hom}(T,  -)^f  \circ \mathbf{L}(- \otimes T )^f \\
&  \cong \mathbf{L}( \op{Hom}(T,  -)^f  \circ (- \otimes T )^f )\\
&  \cong \mathbf{L}(- \otimes \op{Hom}(T,T ) )^f\\
&  \cong \mathbf{L}\op{Id}^f\\
& \cong \op{Id}_{\dabsFact{v}}.
\end{align*}
The first line uses the fact that $\op{Hom}(T, -)$-adapted is exact since $T$ is a bounded complex of injectives.  The second line uses the definition of a left-derived factored functor. The third and fourth lines uses that the natural maps
\begin{displaymath}
 M \to M \otimes \op{Hom}(T,T) \to \op{Hom}(T, M \otimes T) 
\end{displaymath}
are isomorphisms for projective $\op{Hom}(T,T)$-modules and hence on factorizations with projective components, Lemma~\ref{lem: equal functors} The fifth line follows as $- \otimes \op{Hom}(T,T) \cong \op{Id}$.
 
Therefore, $\mathbf{R}\op{Hom}(T, -)^f$ is essentially surjective.  It is fully-faithful by Lemma~\ref{lemma: ff left-exact} i.e.
\begin{equation}
\dabsFact{w} \cong \dabsFact{v}
\label{eq: noncommutative equiv}
\end{equation}

We now have equivalences
\begin{align*}
 \dbcoh{Z} & \cong \dabsFact{w} \\
 & \cong \dabsFact{v}. \\
 & \cong \op{K}(\op{Fact} \mathcal Proj , v).
\end{align*}
The first line is precisely \cite[Theorem 3.4]{Shipman} or equivalently \cite[Theorem 3.6]{Isik}. The second line is Equation~\eqref{eq: noncommutative equiv}. The final line is Corollary~\ref{corollary: homotopy category of inj/proj is derived cat}.
\end{proof}

\begin{example}
Let $X = \P^n$ and $\mathcal E = \O(d)$.  Then $Z$ is a hypersurface defined by $s$, a homogeneous polynomial of degree $d$.  Let $\pi : \op{V}(\O(d)) \to \P^n$ be the projection.  We may consider the tilting object
\[
T = \pi^*(\O \oplus \cdots \oplus \O(n)).
\]
Let $R := k[x_0, \ldots, x_n]$ and denote by $R_m$ homogeneous polynomials of degree $m$.  
Then one easily verifies that
\[
A = \bigoplus_{1 \leq i, j \leq n} \bigoplus_{t=0}^\infty R_{td+i-j}
\]
with the algebra structure given by
\[
rr' = 
\begin{cases}
rr' & \tif i=j' \\
0 & \text{otherwise}
\end{cases}
\in R_{(t+t')d+i'-j}
\]
where $r \in R_{td+i-j}, r' \in R_{t'd+i'-j'}$.  Furthermore, an element of $R_{td+i+j}$ has degree $t$.  For $1 \leq h \leq n$, our homogeneous polynomial $s$ gives an element $v_h$ in the summand $R_d$ corresponding to $i=j=h$.  Then, 
\[
v := \sum_{h=1}^n v_h 
\]
is a central element of $A$ of degree $1$.  We may consider $v$ as a natural transformation $\op{Id} \to (1)$ where $(1)$ denotes the grading shift by $1$.  We get an equivalence
\[
\dbcoh{Z} \cong \op{K}(\op{Fact} \mathcal Proj , v).
\]
The right hand side is a the same as graded matrix factorizations of $(A,v)$ as defined in \cite{Orl09}. 
\end{example}

\begin{remark}
It may be of interest to  compare this equivalence to the one found in \cite{Orl09}.
\end{remark}

\subsection{Integral transforms}
In this section we use our results to recover and generalize a theorem of Baranovsky and Pecharich \cite{BP}.

\begin{definition} Let $X$ and $Y$ be smooth Deligne-Mumford stacks with $P \in \dqcoh{X \times Y}$. 
Denote the two projections by,
\begin{eqnarray*}
\pi_X: X \times Y \to X &\text{and} & \pi_Y :X \times Y \to Y.
\end{eqnarray*}

\noindent The induced \newterm{integral transform} is the functor,
\[
\Phi_{P} :=  \mathbf{R}{\pi_Y}_* \circ (- \overset{\mathbf{L}}{\otimes}   P) \circ \mathbf{L}\pi_X^* : \dqcoh{X} \to \dqcoh{Y}
\]
The object $P$ is called the \newterm{kernel} of the transform $\Phi_{P}$.
\end{definition}

Given two kernels $P \in \dqcoh{X \times Y}, Q \in \dqcoh{Y \times Z}$ we define the convolution to be
\[
P \star Q := \mathbf{R}{\pi_{XZ}}_*(\pi_{XY}^* P  \otimes \pi_{YZ}^*  Q).
\]
It is a standard fact that $\Phi_Q \circ \Phi_P$ is naturally isomorphic to $\Phi_{P \star Q}$.  If one likes, they can take it as a particular case of Proposition~\ref{prop: FM transforms} below.

Let $w: X \to \mathbb A^1$ and $v: Y \to \mathbb A^1$ be morphisms.

\begin{definition}
A morphism $f: X \to Y$ is called \newterm{factored} with respect to $w,v$ if $w = v \circ f$.
\end{definition}

\begin{lemma}
If $f: X \to Y$ is factored then $f_*: \op{Qcoh }X \to \op{Qcoh }Y$ and $f^*: \op{Qcoh }Y \to \op{Qcoh }X$ are factored.
\end{lemma}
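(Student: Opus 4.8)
The plan is to unwind the definition of a factored functor and observe that, because the autoequivalences involved are both the identity, the only thing requiring genuine verification is compatibility with the natural transformations $w$ and $v$. The relevant triples are $(\op{Qcoh}X, \op{Id}, w)$ and $(\op{Qcoh}Y, \op{Id}, v)$, where a morphism $X \to \mathbb{A}^1$ is identified with the corresponding global function $w \in \Gamma(X, \mathcal{O}_X)$, and $w$ is viewed as the natural transformation $\op{Id} \to \op{Id}$ acting by multiplication by $w$ on each quasi-coherent sheaf (similarly for $v$); the condition $w_{\Phi(A)} = \Phi(w_A)$ is automatic since $\Phi = \op{Id}$. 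Both functors $f_*$ and $f^*$ are regarded as landing in the relevant $\op{Ch}^{\op{b}}$ of quasi-coherent sheaves via the inclusion of sheaves as complexes concentrated in degree zero, with $\op{Id}$ and the natural transformations extended to complexes in the evident way. Since $\Phi = \Psi = \op{Id}$ on both sides, for $\theta = f_*$ and for $\theta = f^*$ the required natural isomorphism $\theta \circ \Phi \cong \Psi \circ \theta$ may be taken to be the identity, which is visibly natural; so the entire content of the lemma lies in the identity $\theta(w_A) = v_{\theta(A)}$, with the roles of $w$ and $v$ interchanged as appropriate for $f^*$ versus $f_*$.

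First I would record the translation of the hypothesis. Writing $f^\sharp: \Gamma(Y, \mathcal{O}_Y) \to \Gamma(X, \mathcal{O}_X)$ for the map induced by $f$ on global functions, the assumption $w = v \circ f$ says precisely that $f^\sharp(v) = w$. For $f^*$, I would then note that applying $f^*$ to multiplication by $v$ on a sheaf $\mathcal{F} \in \op{Qcoh}Y$ gives multiplication by $f^\sharp(v)$ on $f^*\mathcal{F}$: this is because $f^*$ is $\mathcal{O}_X$-linear and matches the $\mathcal{O}_Y$-module structure on $\mathcal{F}$ with the $\mathcal{O}_X$-module structure on $f^*\mathcal{F}$ through the canonical morphism $f^*\mathcal{O}_Y \to \mathcal{O}_X$. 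By the translated hypothesis, $f^\sharp(v) = w$, so this is exactly the natural transformation $w$ at $f^*\mathcal{F}$, giving $f^*(v_{\mathcal{F}}) = w_{f^*\mathcal{F}}$. For $f_*$, I would use that $f_*\mathcal{G}$ is naturally a module over $f_*\mathcal{O}_X$, hence over $\mathcal{O}_Y$ via the structure morphism $\mathcal{O}_Y \to f_*\mathcal{O}_X$; pushing forward multiplication by $w$ on a sheaf $\mathcal{G} \in \op{Qcoh}X$ is multiplication by the image of $w$ in $\Gamma(Y, f_*\mathcal{O}_X) = \Gamma(X, \mathcal{O}_X)$ on $f_*\mathcal{G}$, and this image is the $f^\sharp$-image of $v$ by the translated hypothesis, so it agrees with the natural transformation $v$ at $f_*\mathcal{G}$. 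Naturality in the sheaf in both cases is immediate, since all the maps in sight are module-multiplication maps.

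I do not expect any serious obstacle; the lemma is essentially bookkeeping. The one point to handle with care is tracking the two module structures on $f_*\mathcal{G}$ --- over $f_*\mathcal{O}_X$ on the one hand and over $\mathcal{O}_Y$ (through $\mathcal{O}_Y \to f_*\mathcal{O}_X$) on the other --- and making sure that ``multiplication by the function $w$'' (coming from $X$) and ``multiplication by the function $v$'' (coming from $Y$) are being compared as endomorphisms of the same $\mathcal{O}_Y$-module, which is precisely what the equality $f^\sharp(v) = w$ delivers.
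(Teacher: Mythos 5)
Your proof is correct and follows the same route as the paper, which simply states that the lemma ``follows immediately from the definitions''; your write-up is a careful unwinding of exactly that bookkeeping, correctly identifying $w = v\circ f$ with $f^\sharp(v) = w$ and checking compatibility of the multiplication maps under $f^*$ and $f_*$.
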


\begin{proof}
This follows immediately from the definitions.
\end{proof}

\begin{example}
Let $w : X \to \mathbb A^1$ and $v: Y \to \mathbb A^1$.  Then the natural projections of the fiber product $X \times_{\mathbb A^1} Y$ to $X$ and $Y$ are factored.
\end{example}

\begin{definition}
Let $w : X \to \mathbb A^1$ and $v: Y \to \mathbb A^1$.  Let $\pi_X, \pi_Y$ be the projections of $X \times_{\mathbb A^1} Y$ onto $X, Y$ respectively and let $P \in \dqcoh{X \times_{\mathbb A^1} Y}$.  The \newterm{factored integral transform} is the functor
\[
\Phi_{P}^f :=  \mathbf{R}{\pi_Y}_*^f \circ (- \overset{\mathbf{L}}{\otimes}  P)^f \circ \mathbf{L}(\pi_X^*)^f : \dabsFact{w} \to \dabsfact{v}.
\]
\end{definition}

\begin{proposition}[Base extension for factorizations] \label{prop: base change}
Consider a Cartesian square of factored morphisms
 \begin{center}
 \begin{tikzpicture}[description/.style={fill=white,inner sep=2pt}]
  \matrix (m) [matrix of math nodes, row sep=2em, column sep=2em, text height=1.5ex, text depth=0.25ex]
  { Z  &  Y  \\ 
    X  &  W \\ };
  \path[->,font=\scriptsize]
  (m-1-1) edge node[above] {$u'$} (m-1-2)
  (m-1-1) edge node[left] {$v'$} (m-2-1)
  (m-1-2) edge node[right] {$v$} (m-2-2)
  (m-2-1) edge node[above] {$u$} (m-2-2)
  ;
 \end{tikzpicture}
 \end{center}
Assume $u$ is flat.  Then there is a natural isomorphism between the composition of derived  factored functors
\[
 (u^*)^f \circ \mathbf{R} v_*^f  \cong   \mathbf{R} (v'_*)^f \circ (u'^*)^f
\]
\end{proposition}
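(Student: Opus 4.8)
The strategy is to reduce to classical flat base change on quasi-coherent sheaves, using the explicit injective resolutions of factorizations furnished by Theorem~\ref{theorem: strictification injective}. As $X,Y,Z,W$ are smooth Deligne--Mumford stacks, the categories of quasi-coherent sheaves have finite injective dimension, so the absolute, co-, and contra-derived categories of factorizations agree and are realized by factorizations with injective components (Corollary~\ref{corollary: homotopy category of inj/proj is derived cat}); in particular $\mathbf{R}v_*^f$ and $\mathbf{R}(v'_*)^f$ exist. Since $u$ is flat, so is its base change $u'$, whence $u^*$ and $u'^*$ are exact; therefore $(u^*)^f$ and $(u'^*)^f$ are exact functors on $\op{Fact}$, send acyclic factorizations to acyclic ones, pass to the derived factored categories with no derivation, and coincide with their own derived factored functors. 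Finally $v_*,v'_*,u^*,u'^*$ are all factored by the lemma that $f_*$ and $f^*$ are factored when $f$ is.

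Next I would compute both composites on a single resolution. Fix a factorization $F$ over $Y$; applying Theorem~\ref{theorem: strictification injective} to finite injective resolutions of $F^{-1}$ and $F^{0}$ produces a factorization $I$ with injective components together with a quasi-isomorphism $F\to I$ (genuine, since the resolutions are finite). Because injective sheaves are adapted to $v_*$, the factorization $v_*^f I$ represents $\mathbf{R}v_*^f F$, and applying the exact functor $u^*$ componentwise gives $(u^*)^f\circ \mathbf{R}v_*^f\,(F)\cong (u^*\circ v_*)^f I$ (the totalization built into $\theta^f$ is trivial here, as $v_*$ and $u^*$ have targets concentrated in degree $0$). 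On the other side, exactness of $u'^*$ makes $u'^*F\to u'^*I$ a quasi-isomorphism, and each component $u'^*I^{j}$ is acyclic for $v'_*$, since an injective sheaf is $v_*$-acyclic and flat pullback of a $v_*$-acyclic sheaf is $v'_*$-acyclic. Hence $u'^*I$ is a $v'_*$-adapted factorization, so $\mathbf{R}(v'_*)^f\circ(u'^*)^f\,(F)\cong (v'_*\circ u'^*)^f I$.

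It remains to identify $(u^*\circ v_*)^f I$ with $(v'_*\circ u'^*)^f I$. Classical flat base change (legitimate as $u$ is flat and $v$ is quasi-compact and quasi-separated) gives a natural isomorphism $u^*\circ\mathbf{R}v_*\cong\mathbf{R}v'_*\circ u'^*$ of functors $\dqcoh{Y}\to\dqcoh{X}$; evaluating on the injective sheaves $I^{j}$, on which these functors compute the underived $u^*v_*$ and $v'_*u'^*$, yields isomorphisms $u^*v_*I^{j}\xrightarrow{\sim}v'_*u'^*I^{j}$ natural in $I^{j}$, hence compatible with the $\Phi$-twisted structure morphisms $\phi^{-1}_I,\phi^{0}_I$ assembled from the $I^{j}$. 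This is precisely an isomorphism of factorizations $(u^*\circ v_*)^f I\cong (v'_*\circ u'^*)^f I$. Chaining the three isomorphisms, and observing that every step is natural in $F$ (the assignment $F\mapsto I$ is functorial up to homotopy and the derived functors are honest functors of the derived categories), produces the desired natural isomorphism $(u^*)^f\circ\mathbf{R}v_*^f\cong\mathbf{R}(v'_*)^f\circ(u'^*)^f$. Alternatively, one can phrase this uniformly by applying Corollary~\ref{cor: composition of derived functors} and Lemma~\ref{lem: equal functors} to the base-change factored natural transformation between the composite functors $u^*\circ v_*$ and $v'_*\circ u'^*$.

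I expect the only real obstacle to lie in the step where flatness is genuinely used: one must know that the injective-component resolution $I$ of $F$ stays adapted to $v'_*$ after applying $u'^*$, i.e.\ that flat pullback preserves $v_*$-acyclicity --- this classical input is exactly what forces the flatness hypothesis, which therefore cannot be dropped. A secondary, purely bookkeeping, point is to check that the componentwise base-change isomorphism respects the twisted differentials of the folded factorization $I$; this is immediate from the naturality of the base-change map, but it is the only place where one uses the structure of a factorization rather than just of a complex.
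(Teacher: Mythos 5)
Your proof is correct and follows essentially the same route as the paper's: both arguments hinge on the classical base-change natural transformation $u^*\circ v_* \to v'_*\circ u'^*$ being an isomorphism on (flat pullbacks of) injectives, which shows $u'^*$ of an injective is $v'_*$-adapted and reduces everything to a componentwise comparison. The paper simply compresses your explicit componentwise verification into a single invocation of Corollary~\ref{cor: composition of derived functors} and Lemma~\ref{lem: equal functors} — exactly the "alternative" phrasing you offer at the end.
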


\begin{proof}
Recall that the usual statement of flat base change states that we have a natural morphism
\begin{displaymath}
 u^* \circ v_* \to  v'_* \circ u'^*.
\end{displaymath}
which induces a natural isomorphism
\begin{displaymath}
 u^* \circ \mathbf{R} v_*  \cong   \mathbf{R} v'_* \circ u'^*.
\end{displaymath}
Consequently, we see that the image of injectives under $u'^*$ is $v'_*$-adapted. So, 
\begin{displaymath}
 \mathbf{R} (v'_* \circ u'^*) = \mathbf{R} v'_* \circ u'^*.
\end{displaymath}
Tautologically, 
\begin{displaymath}
 \mathbf{R} (u^* \circ v_*) = u^* \circ \mathbf{R} v_*.
\end{displaymath}
Thus, we are in exactly the situation of Lemma~\ref{lem: equal functors}.
\end{proof}

We will next need to prove a version of the projection formula.

\begin{proposition}[Projection formula for factorizations] \label{prop: projection formula}
Let $g : X \to Y $ be a factored morphism and $P$ be of complex of locally-free sheaves on $X$. There is a natural isomorphism between the composition of derived factored functors
\[
\mathbf{R} g_*^f \circ ( P  \overset{\mathbf{L}}{\otimes}_{\mathcal O_X} - )^f \circ \mathbf{L}(g^*)^f \cong (\mathbf{R}g_*P  \overset{\mathbf{L}}{\otimes}_{\mathcal O_Y} -)^f.
\]
\end{proposition}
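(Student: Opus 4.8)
The plan is to deduce this from the classical derived projection formula on $\op{D}(\op{Qcoh})$ and then lift it to factorizations using the machinery already in place, following the template of Proposition~\ref{prop: base change}. First one makes two simplifications. Since $P$ is a bounded complex of locally-free sheaves, $P\overset{\mathbf L}{\otimes}_{\mathcal O_X}-$ is an exact factored functor, so $(P\otimes_{\mathcal O_X}-)^f=\mathbf L(P\otimes_{\mathcal O_X}-)^f$ and no resolution is needed there. Likewise, because $X$ and $Y$ are smooth, $\mathbf Rg_*P$ is represented by a bounded complex $Q$ of flat $\mathcal O_Y$-modules (it is bounded and of finite Tor-dimension over $\mathcal O_Y$), and it is with respect to such a model $Q$ that the right-hand side $(\mathbf Rg_*P\overset{\mathbf L}{\otimes}_{\mathcal O_Y}-)^f=(Q\otimes_{\mathcal O_Y}-)^f$ is understood; this functor is again exact.

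Next I would set up where to compute. The geometric hypotheses give $\op{Qcoh}$ finite injective dimension and finite flat dimension, so by Corollary~\ref{corollary: homotopy category of inj/proj is derived cat} and Proposition~\ref{proposition: adapted abs-replacement} (with $\mathcal E$ the full subcategory of flat sheaves, which is preserved by the relevant autoequivalence) all three derived factored functors in sight exist on the absolute derived categories of factorizations. To evaluate the left-hand side on a factorization $N$ of $v$, I would: replace $N$ by a flat factorization $\widetilde N$ using Theorem~\ref{theorem: strictification projective} (equivalently Proposition~\ref{proposition: adapted abs-replacement}); note that flat factorizations are $g^*$-adapted, so $\mathbf L(g^*)^f(N)\cong g^*(\widetilde N)$ on the nose; apply $(P\otimes_{\mathcal O_X}-)^f$ to get $P\otimes_{\mathcal O_X}g^*(\widetilde N)$, whose components are flat on $X$; and finally resolve this factorization by an injective factorization $J$ of $w$ via Theorem~\ref{theorem: strictification injective}, so that the left-hand side is computed by $g_*^f(J)$. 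On the other hand the right-hand side is simply $Q\otimes_{\mathcal O_Y}\widetilde N$ (using exactness of $Q\otimes_{\mathcal O_Y}-$ to pass from $N$ to $\widetilde N$).

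It then remains to produce a natural quasi-isomorphism $Q\otimes_{\mathcal O_Y}\widetilde N\to g_*^f(J)$. The classical projection morphism $g_*(\mathcal F)\otimes_{\mathcal O_Y}\mathcal G\to g_*(\mathcal F\otimes_{\mathcal O_X}g^*\mathcal G)$, after fixing functorial injective resolutions and totalizing, gives a factored natural transformation between two factored functors $\op{Qcoh}Y\to\op{Ch}^{\op b}(\op{Qcoh}Y)$ whose derived factored functors are, respectively, $(Q\otimes_{\mathcal O_Y}-)^f$ and $\mathbf Rg_*^f\circ(P\otimes_{\mathcal O_X}-)^f\circ\mathbf L(g^*)^f$. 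By the classical derived projection formula — valid here precisely because $P$ is a complex of locally-free, hence finite Tor-dimension, sheaves — this natural transformation is a quasi-isomorphism on every object of $\op{Qcoh}Y$, so Lemma~\ref{lem: equal functors} promotes it to a natural isomorphism of derived factored functors. (Alternatively, one can argue directly: the transformation on $g_*^f(J)$ restricts, on the components of $\widetilde N$, to the classical projection isomorphisms, and by Lemma~\ref{lemma: cones over foldings are folded} together with Lemma~\ref{lemma: acyclic folds to acyclic} these assemble into a co-quasi-isomorphism of the folded factorizations.)

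The main obstacle is the mixed variance: $g^*$ is only right exact and $g_*$ only left exact, so Corollary~\ref{cor: composition of derived functors} does not apply verbatim, and one is forced to genuinely interleave a flat resolution (to compute $\mathbf Lg^*$) with an injective resolution (to compute $\mathbf Rg_*$). Keeping track of the folding structure of these resolutions produced by Theorem~\ref{theorem: strictification injective} and Theorem~\ref{theorem: strictification projective}, and verifying that the classical projection isomorphisms on the components of $\widetilde N$ fit together into a quasi-isomorphism of the folded factorizations, is the bookkeeping-heavy core of the argument; a secondary technical nuisance is arranging sufficiently functorial choices of resolutions so that Lemma~\ref{lem: equal functors} can be invoked cleanly.
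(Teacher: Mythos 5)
Your proposal is correct and follows essentially the same route as the paper, which simply observes that the classical projection morphism $g_*(\mathcal E\otimes g^*\mathcal F)\to g_*\mathcal E\otimes\mathcal F$ is a factored natural transformation that is an honest isomorphism when $\mathcal F$ is locally-free, and then evaluates on factorizations with injective components (for $g_*$) and locally-free components (for the tensor and pullback), exactly the adapted-resolution-plus-natural-transformation argument you spell out in more detail. The only small adjustment: since $\op{Qcoh}$ of a stack lacks enough projectives, the locally-free replacement should be obtained via Proposition~\ref{proposition: adapted abs-replacement} (as the paper does with $\op{Q}_{\mathsf C}$ for $\mathsf C$ the locally-free sheaves) rather than Theorem~\ref{theorem: strictification projective}, which you already note as the alternative.
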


\begin{proof}
 First, recall that the usual projection formula for $g$ gives a factored natural transformation 
 \begin{displaymath}
  \nu_{\mathcal E,\mathcal F}: g_*(\mathcal E \otimes g^* \mathcal F) \to g_* \mathcal E \otimes \mathcal F
 \end{displaymath}
 which is an isomorphism whenever $\mathcal E$ is quasi-coherent and $\mathcal F$ is locally-free. Using the induced natural transformation on factorizations with $\mathcal E$ a factorization with injective components and $\mathcal F$ a locally-free factorization yields the desired projection formula.
\end{proof}

\begin{proposition}\label{prop: FM transforms}
Let $P \in \dqcoh{X \times_{\mathbb A^1} Y}, Q \in \dqcoh{Y \times_{\mathbb A^1} Z}$ be complexes of vector bundles.  One has a natural isomorphism
\[
\Phi^f_{Q} \circ \Phi^f_{P} \cong \Phi^f_{P \star Q}.
\]
\end{proposition}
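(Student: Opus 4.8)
The plan is to transcribe, one manipulation at a time, the classical proof that a composition of two Fourier--Mukai functors is a Fourier--Mukai functor with kernel the convolution, replacing each ordinary base-change or projection-formula identity by the factored version established above. The structural reason this works is that every morphism in sight is a morphism over $\mathbb A^1$ between fiber products over $\mathbb A^1$, hence factored; consequently the functors $\mathbf{L}(\pi^*)^f$, $\mathbf{R}\pi_*^f$ and $(-\overset{\mathbf{L}}{\otimes}P)^f$ all make sense, the natural isomorphisms relating them are factored natural isomorphisms, and Lemma~\ref{lem: equal functors} together with Corollary~\ref{cor: composition of derived functors} propagate all of them down to the absolute derived categories of factorizations.

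Writing $V := X\times_{\mathbb A^1}Y\times_{\mathbb A^1}Z$ with its projections $q_{XY},q_{YZ},q_{XZ}$ to the pairwise fiber products and $q_X,q_Y,q_Z$ to the factors, the composite $\Phi^f_Q\circ\Phi^f_P$ unwinds to a six-fold composite of factored pullbacks, tensors with $P$ and with $Q$, and pushforwards. The steps, in order, are: (i) apply the factored base extension, Proposition~\ref{prop: base change}, to the Cartesian square with corners $V$, $X\times_{\mathbb A^1}Y$, $Y\times_{\mathbb A^1}Z$, $Y$, to rewrite the inner composite ``pull back from $Y$ to $Y\times_{\mathbb A^1}Z$, applied after pushing forward from $X\times_{\mathbb A^1}Y$ to $Y$'' as ``pull back from $X\times_{\mathbb A^1}Y$ to $V$, then push forward from $V$ to $Y\times_{\mathbb A^1}Z$'', which brings the computation onto $V$; (ii) use that pullback commutes with tensor products and with composition to combine the two pullbacks into $\mathbf{L}(q_X^*)^f$ and to pull $P$ back to $V$; (iii) apply the factored projection formula, Proposition~\ref{prop: projection formula} (with $g=q_{YZ}$ and $P$ a complex of vector bundles), to absorb $-\otimes Q$ inside the pushforward along $q_{YZ}$, and Corollary~\ref{cor: composition of derived functors} to collapse the two successive pushforwards into $\mathbf{R}{q_Z}_*^f$; (iv) factor $q_Z$ and $q_X$ through $q_{XZ}$ and apply Corollary~\ref{cor: composition of derived functors} and the factored projection formula once more, now with $g=q_{XZ}$ and the vector-bundle complex $q_{XY}^*P\otimes q_{YZ}^*Q$, to reach
\[
\mathbf{R}{\pi_Z}_*^f\circ\big((-)\overset{\mathbf{L}}{\otimes}(P\star Q)\big)^f\circ\mathbf{L}(\pi_X^*)^f,
\]
where $\pi_X,\pi_Z$ are the projections of $X\times_{\mathbb A^1}Z$ and where we have used that $\mathbf{R}{q_{XZ}}_*^f(q_{XY}^*P\otimes q_{YZ}^*Q)$ is, by definition of the convolution, $P\star Q$. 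This composite is exactly $\Phi^f_{P\star Q}$.

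The difficulty will not be any single identity — each is the factored analogue of a routine manipulation with $\Phi_P$, and every input is already in place — but the bookkeeping attached to them: at each stage one must confirm that the natural isomorphism produced is genuinely a factored natural isomorphism, so that Lemma~\ref{lem: equal functors} and Corollary~\ref{cor: composition of derived functors} apply, and one must confirm that each use of Corollary~\ref{cor: composition of derived functors} is legitimate, i.e. that there are enough adapted factorizations of the required kind. The latter holds because injective quasi-coherent sheaves are adapted for every pushforward and their pushforwards are flasque, hence acyclic for any further pushforward, so iterated pushforwards genuinely compute. The one point where the factored picture is not a verbatim copy of the classical one is the flatness hypothesis of Proposition~\ref{prop: base change}: projections out of an ordinary product are automatically flat, whereas a projection such as $q_{YZ}\colon V\to Y\times_{\mathbb A^1}Z$ is a base change of the structure morphism $X\to\mathbb A^1$ and so is flat only to the extent that morphism is. I would handle this either by assuming the Landau--Ginzburg potentials $w,v$ flat, as is customary, or by observing that because $P$ and $Q$ are complexes of vector bundles every tensor functor occurring is already exact, so the base-change isomorphism is only ever needed after composing with exact functors and the derived subtlety disappears.
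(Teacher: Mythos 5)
Your proposal follows essentially the same route as the paper's proof: unwind the composite, apply the factored base extension (Proposition~\ref{prop: base change}) to move the computation onto the triple fiber product, use the factored projection formula (Proposition~\ref{prop: projection formula}) together with Corollary~\ref{cor: composition of derived functors} to absorb the tensor factors and collapse the successive pushforwards, and identify the result with $\Phi^f_{P\star Q}$. Your observation about the flatness hypothesis needed for the base-change step (the projection $Y\times_{\mathbb A^1}Z\to Y$ is flat only if the potential on $Z$ is) is a legitimate point that the paper's own proof passes over in silence.
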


\begin{proof}
For notational simplicity, we let $E$ be a factorization with locally-free components.  This is obtained by replacing whatever factorization which such a $E$ via $\op{Q}_{\mathsf{C}} \circ \op{Q}_{\mathsf{C}}^{-1}$ where $\mathsf{C}$ is the class of locally-free sheaves.  We have natural isomorphisms
\begin{align*}
\Phi^f_{Q} \circ \Phi^f_{P}(E) & =   \mathbf{R}{\pi_Z}_*^f(Q \otimes (\pi_Y^*)^f(\mathbf{R}{\pi_Y}_*^f(P \otimes \pi_X^* E)^f ))^f \\
& \cong   \mathbf{R}{\pi_Z}_*^f(Q \otimes {\mathbf{R}\pi_{YZ}}_*^f({\pi_{XY}}^*)^f(P \otimes \pi_X^* E)^f )^f \\
& \cong   \mathbf{R}{\pi_Z}_*^f ({\mathbf{R}\pi_{YZ}}_*^f ({\pi_{YZ}^* Q \otimes (\pi_{XY}}^*)^f 	(P \otimes \pi_X^* E)^f )^f \\
& \cong   \mathbf{R}{\pi_Z}_*^f ({\mathbf{R}\pi_{XZ}}_*^f ({\pi_{YZ}^* Q \otimes (\pi_{XY}}^*)^f (P \otimes \pi_X^* E)^f )^f  \\
& \cong   \mathbf{R}{\pi_Z}_*^f ({\mathbf{R}\pi_{XZ}}_*^f ({\pi_{YZ}^* Q \otimes \pi_{XY}}^* P \otimes (\pi_{XY}^*)^f (\pi_X^*)^f E))^f \\
& \cong   \mathbf{R}{\pi_Z}_*^f ({\mathbf{R}\pi_{XZ}}_*^f ({\pi_{YZ}^* Q \otimes \pi_{XY}}^* P \otimes (\pi_{XZ}^*)^f (\pi_X^*)^f E))^f \\
& \cong   \mathbf{R}{\pi_Z}_*^f ({\mathbf{R}\pi_{XZ}}_*^f ({\pi_{YZ}^* Q \otimes \pi_{XY}}^* P )\otimes (\pi_X^*)^f E))^f \\
& \cong   \mathbf{R}{\pi_Z}_*^f (P \star Q \otimes (\pi_X^*)^f E))^f \\
& \cong   \Phi^f_{P \star Q}(E). \\
\end{align*}

The first line is by definition.  The second line is Proposition~\ref{prop: base change}.  The third line uses Proposition~\ref{prop: projection formula}.  The fourth line uses the fact that 
\begin{align*}
\mathbf{R}{\pi_Z}_*^f  \circ \mathbf{R}{\pi_{YZ}}_*^f & \cong  \mathbf{R}(({\pi_Z} \circ {\pi_{YZ}})_*^f ) \\
& \cong \mathbf{R}(({\pi_Z} \circ {\pi_{XZ}})_*^f ) \\
& \cong \mathbf{R}{\pi_Z}_*^f  \circ \mathbf{R}{\pi_{XZ}}_*^f 
\end{align*}
where the first and third lines are Corollary~\ref{cor: composition of derived functors} and the second line is just a natural isomorphism of functors at the Abelian level.
The fifth line uses the isomorphism of functors at the Abelian level
\[
\pi_{XY}^*( - \otimes - ) \cong \pi_{XY}^*(-) \otimes \pi_{XY}^*(-).
\]
The sixth line uses the isomorphism of functors at the Abelian level
\[
\pi_{XZ}^* \circ \pi_X^* \cong \pi_{XY}^*\circ\pi_X^*.
\]  The seventh line is Proposition~\ref{prop: projection formula} again.  The rest is by definition.  
\end{proof}
  
\begin{lemma} \label{lemma: we have the diagonal factorization}
 There is a natural isomorphism
 \begin{displaymath}
  \Phi_{\O_{\Delta}}^f \cong \op{Id}.
 \end{displaymath}
\end{lemma}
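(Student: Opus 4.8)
The plan is to reduce the statement to the corresponding Abelian-level fact, namely that the integral transform with kernel $\mathcal O_\Delta$ on $\op{Qcoh}$ of the fiber product (which here is just $X$ itself, embedded diagonally) is naturally isomorphic to the identity, and then transport this isomorphism through the factored-functor formalism using Lemma~\ref{lem: equal functors} together with the composition result Corollary~\ref{cor: composition of derived functors}. First I would unwind the definition of $\Phi_{\mathcal O_\Delta}^f$: the kernel $\mathcal O_\Delta$ lives on $X \times_{\mathbb A^1} X$, and since $w = v$ and the morphism is the identity, the diagonal $\Delta : X \to X \times_{\mathbb A^1} X$ is a factored morphism, with $\mathcal O_\Delta = \Delta_* \mathcal O_X$. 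Thus $\Phi_{\mathcal O_\Delta}^f = \mathbf R{\pi_2}_*^f \circ (- \overset{\mathbf L}{\otimes} \Delta_* \mathcal O_X)^f \circ \mathbf L(\pi_1^*)^f$. Using the projection formula (Proposition~\ref{prop: projection formula}) applied to $\Delta$, and base change (Proposition~\ref{prop: base change}) for the Cartesian square expressing $\pi_i \circ \Delta = \op{id}_X$, one rewrites $(- \otimes \Delta_* \mathcal O_X) \circ \pi_1^*$ as $\Delta_* \circ \Delta^* \circ \pi_1^*= \Delta_*$, so that the whole composite becomes $\mathbf R({\pi_2} \circ \Delta)_*^f = \mathbf R(\op{id}_X)_*^f$, and the latter is the identity functor.

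The cleanest organization is to observe that each of these manipulations is exactly an instance of an isomorphism of factored functors coming from an isomorphism of the underlying Abelian-level functors. By Lemma~\ref{lem: equal functors}, a factored natural transformation between left-exact (resp.\ right-exact) functors which is an isomorphism after deriving on the Abelian level induces an isomorphism of the derived factored functors on the appropriate derived category of factorizations — and since $\op{Qcoh}X$ has finite injective dimension (as noted at the start of Section~4, $X$ being a smooth DM stack), all of $\dabsFact{w}$, $\dcoFact{w}$, $\dconFact{w}$ are available. Concretely: the projection formula isomorphism $\Delta_*(\mathcal E \otimes \Delta^* \mathcal F) \cong \Delta_* \mathcal E \otimes \mathcal F$ with $\mathcal E = \mathcal O_X$ gives $\Delta_* \Delta^* \mathcal F \cong \Delta_* \mathcal O_X \otimes \mathcal F$; base change along $\pi_1 \circ \Delta = \op{id}$ gives $\Delta^* \pi_1^* \cong \op{id}$; composition of pushforwards along $\pi_2 \circ \Delta = \op{id}$ (Corollary~\ref{cor: composition of derived functors}) gives $\mathbf R{\pi_2}_*^f \circ \mathbf R\Delta_*^f \cong \mathbf R(\op{id})_*^f = \op{Id}$. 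Chaining these, exactly as in the proof of Proposition~\ref{prop: FM transforms} with $Z = X$ and $Q = \mathcal O_\Delta$, collapses $\Phi_{\mathcal O_\Delta}^f$ to $\op{Id}$.

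The main obstacle is bookkeeping rather than anything conceptual: one must make sure the adaptedness hypotheses needed to invoke the derived-functor machinery (enough $\theta$-adapted factorizations whose images are adapted for the next functor, as in Corollary~\ref{cor: composition of derived functors}) are satisfied at each stage — this is where one uses that $\Delta$ is a closed immersion of smooth stacks so pushforward of locally frees behaves well, and that $\mathcal O_\Delta$ is (locally) resolved by locally-free sheaves so the derived tensor is computed by the factored functor on locally-free factorizations. Once the replacements $\op{Q}_{\mathsf C} \circ \op{Q}_{\mathsf C}^{-1}$ (for $\mathsf C$ the locally-free sheaves) are inserted as in the proof of Proposition~\ref{prop: FM transforms} to guarantee we are always working with factorizations whose components are locally free, each isomorphism in the chain is literally the factored version of a standard Abelian-level isomorphism, and Lemma~\ref{lem: equal functors} supplies the passage to the derived factored level. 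I would therefore present the proof as a short computation mirroring Proposition~\ref{prop: FM transforms}, citing Propositions~\ref{prop: base change} and~\ref{prop: projection formula} and Corollary~\ref{cor: composition of derived functors} for the three steps and finishing with $\mathbf R(\op{id}_X)_*^f = \op{Id}$.
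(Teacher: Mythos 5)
Your proposal is correct and follows essentially the same route as the paper: replace the factorization by one with locally-free components via $\op{Q}_{\mathsf C}\circ\op{Q}_{\mathsf C}^{-1}$, use the factored projection formula to rewrite $\O_\Delta\otimes(-)$ as $\Delta_*^f(\Delta^*)^f$, then collapse via Corollary~\ref{cor: composition of derived functors} and the tautological identities ${\pi_2}_*\circ\Delta_*\cong\op{Id}$ and $\Delta^*\circ\pi_1^*\cong\op{Id}$. The only cosmetic difference is that you invoke base change (Proposition~\ref{prop: base change}) for $\Delta^*\pi_1^*\cong\op{Id}$, where the paper simply uses it as an isomorphism of functors at the Abelian level.
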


\begin{proof}
As in the above proof, we let $E$ be a factorization with locally-free components.  This is obtained by replacing whatever factorization which such a $E$ via $\op{Q}_{\mathsf{C}} \circ \op{Q}_{\mathsf{C}}^{-1}$ where $\mathsf{C}$ is the class of locally-free sheaves. We have natural isomorphisms
\begin{align*}
  \mathbf{R}\pi_{2*}^f ( \O_\Delta {\otimes}_{\mathcal O_{X \times X}} (\pi_1^*)^f  E  )^f
   & \cong   \mathbf{R}\pi_{2*}^f ( \Delta_*^f   (\Delta^*)^f (\pi_1^*)^f  E  )  \\
   & \cong   \mathbf{R}(\pi_{2*} \circ \Delta_*)^f  (\Delta^*)^f (\pi_1^*)^f  E  ) \\
   & \cong   E.
\end{align*}
The first line is Proposition~\ref{prop: projection formula}.  The second line is from the isomorphism of functors
\[
 (\O_\Delta \otimes_{\O_{X \times X}} - ) \cong \Delta_* \circ \Delta^*.
\]
Let third line is the isomorphism of functors
\[
 {\pi_2}_* \circ \Delta_* \cong \op{Id}
\]
and
\[
 {\Delta}^* \circ \pi_1^* \cong \op{Id}.
\]
\end{proof}

\begin{theorem}
Let $P \in \dqcoh{X \times_{\mathbb A^1} Y}$ and assume $\Phi_P$ is fully-faithful.   Then $\Phi^f_{P}$ is fully-faithful.
\end{theorem}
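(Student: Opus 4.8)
The plan is to package the factored integral transform $\Phi^f_P$ as a single right-derived factored functor and then apply Lemma~\ref{lemma: ff left-exact}. Replacing $P$ by a bounded-above complex of locally free sheaves on $X\times_{\mathbb A^1}Y$ changes neither $\Phi_P$ nor $\Phi^f_P$ up to natural isomorphism, so I would first assume $P$ is such a complex; then $(-\overset{\mathbf L}{\otimes}P)$ is exact, and (after a further reduction passing through the flat projection $X\times Y\to X$, or under the usual flatness of the potentials) so is $\mathbf L\pi_X^*$. Put
\[
 \theta:={\pi_Y}_*\bigl((\pi_X^*-)\otimes_{\mathcal O_{X\times_{\mathbb A^1}Y}}P\bigr)\colon\op{Qcoh}X\longrightarrow\op{Ch}^{\op b}(\op{Qcoh}Y).
\]
This $\theta$ is left exact, and it is factored: the autoequivalences here are $\op{Id}_{\op{Qcoh}X}$ and $\op{Id}_{\op{Qcoh}Y}$, so the only requirement is $\theta(w_A)=v_{\theta(A)}$, which holds because $w\circ\pi_X=v\circ\pi_Y$ on $X\times_{\mathbb A^1}Y$. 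Using Corollary~\ref{cor: composition of derived functors} together with an appropriate supply of adapted factorizations, one obtains natural isomorphisms
\[
 \Phi^f_P\;\cong\;\mathbf R{\pi_Y}_*^f\circ(-\otimes P)^f\circ(\pi_X^*)^f\;\cong\;\mathbf R\theta^f,
\]
and, at the level of derived categories of the underlying abelian categories, $\mathbf R\theta\cong\Phi_P$.

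Now I would feed this into Lemma~\ref{lemma: ff left-exact}. Its standing hypotheses are satisfied: $\op{Qcoh}Y$ has finite injective dimension since $Y$ is a smooth Deligne--Mumford stack, and for any $E,F\in\op{Fact}(w)$ the components $E^i,F^j$ are quasi-coherent sheaves on the smooth stack $X$, hence of finite injective dimension. The remaining input is that the maps $\mathbf R\theta\colon\op{Hom}_{\op D(\op{Qcoh}X)}(E^i,F^j[t])\to\op{Hom}_{\op D(\op{Qcoh}Y)}(\mathbf R\theta\,E^i,\mathbf R\theta\,F^j[t])$ are isomorphisms for all $i,j,t$; since $\mathbf R\theta\cong\Phi_P$, this is exactly the hypothesis that $\Phi_P$ is fully faithful. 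Lemma~\ref{lemma: ff left-exact} then gives that $\Phi^f_P=\mathbf R\theta^f$ induces isomorphisms $\op{Hom}_{\dcoFact{w}}(E,F[t])\xrightarrow{\ \sim\ }\op{Hom}_{\dcoFact{v}}(\Phi^f_PE,\Phi^f_PF[t])$ for all $E,F$ and all $t$. Finally, because $\op{Qcoh}X$ and $\op{Qcoh}Y$ have finite injective dimension, Corollary~\ref{corollary: homotopy category of inj/proj is derived cat} identifies $\dcoFact{w}$ with $\dabsFact{w}$ (and $\dcoFact{v}$ with $\dabsFact{v}$) compatibly with $\Phi^f_P$, so these are precisely the isomorphisms expressing full faithfulness of $\Phi^f_P\colon\dabsFact{w}\to\dabsFact{v}$.

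The main obstacle is the first step: identifying $\Phi^f_P$ with $\mathbf R\theta^f$ for an honest single left-exact abelian functor $\theta$ with $\mathbf R\theta\cong\Phi_P$. The naive abelian composite ${\pi_Y}_*((\pi_X^*-)\otimes P)$ does not compute $\Phi_P$ directly, since the order of deriving matters---pullbacks of injectives on $X$ need not be ${\pi_Y}_*$-acyclic---and Corollary~\ref{cor: composition of derived functors} is exactly the mechanism that corrects this, at the price of producing enough adapted factorizations with the requisite cascading acyclicity; this, together with the reduction to locally free $P$ and the treatment of $\mathbf L\pi_X^*$, is where the geometry of the fiber product over $\mathbb A^1$ enters. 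An alternative is to run the Fourier--Mukai calculus of Propositions~\ref{prop: base change}--\ref{prop: FM transforms} and Lemma~\ref{lemma: we have the diagonal factorization}: full faithfulness of $\Phi_P$ yields an adjoint kernel $Q$ on $Y\times_{\mathbb A^1}X$ with $\Phi^f_Q\circ\Phi^f_P\cong\Phi^f_{Q\star P}\cong\Phi^f_{\mathcal O_\Delta}\cong\op{Id}$; but upgrading this retraction to full faithfulness additionally requires identifying $\Phi^f_Q$ as the right adjoint of $\Phi^f_P$ with this isomorphism as the unit, so the route through Lemma~\ref{lemma: ff left-exact} is the cleaner one.
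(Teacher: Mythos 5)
Your argument is essentially correct, but it takes a genuinely different route from the paper's. The paper stays entirely inside the kernel calculus it has just developed: Grothendieck duality produces an adjoint kernel $Q$ with $i_*Q \star i_*P \cong \O_{\Delta_X}$ (this being equivalent to full faithfulness of $\Phi_P$), and then Proposition~\ref{prop: FM transforms} and Lemma~\ref{lemma: we have the diagonal factorization} give $\Phi^f_Q \circ \Phi^f_P \cong \Phi^f_{\O_{\Delta_X}} \cong \op{Id}$. You instead collapse $\Phi^f_P$ into a single right-derived factored functor $\mathbf{R}\theta^f$ and quote Lemma~\ref{lemma: ff left-exact}, which is exactly how the paper handles full faithfulness in its noncommutative application in Section 5.1. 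Each route has a soft spot, and you correctly identify the paper's: $\Phi^f_Q \circ \Phi^f_P \cong \op{Id}$ by itself yields faithfulness and a split surjection on Hom-sets, and upgrading this to fullness requires recognizing $\Phi^f_Q$ as an adjoint with the given isomorphism as the unit (standard in the Fourier--Mukai literature, but not spelled out in the paper). The soft spot of your route is the identification $\Phi^f_P \cong \mathbf{R}\theta^f$: you rightly flag that this rests on Corollary~\ref{cor: composition of derived functors} and hence on exhibiting enough objects $E$ for which $\pi_X^*E \otimes P$ is ${\pi_Y}_*$-adapted, but you do not actually produce such a class (injectives on $X$ do not obviously work, as you note). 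Since the paper invokes Corollary~\ref{cor: composition of derived functors} in Section 5 at the same level of detail, and since your reduction of $P$ to a complex of locally free sheaves and your passage from $\dcoFact{w}$ to $\dabsFact{w}$ via Corollary~\ref{corollary: homotopy category of inj/proj is derived cat} are both legitimate (the former is also silently needed by the paper for Proposition~\ref{prop: FM transforms}), I consider your proof acceptable; what it buys is an argument that delivers fullness directly, while the paper's buys a proof that recycles the kernel formalism and sets up the equivalence statement that follows.
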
  

\begin{proof}
Let $i : X \times_{\mathbb A^1} Y \to X \times Y$ be the natural map.
The functor $\Phi_{P}$ is fully-faithful if and only if $i_*Q \star i_*P \cong \O_{\Delta_X}$
 where
\[
Q := \mathbf{R}\mathcal{H}om(P, \O_{X \times _{\mathbb A^1} Y}) \otimes \pi_X^*\omega_X.
\]
so that
\[
i_* Q \cong \mathbf{R}\mathcal{H}om(i_*P, \O_{X \times Y}) \otimes p^*\omega_X.
\]
by Grothendieck duality.

Therefore, there are natural isomorphisms,
\begin{align*}
\Phi^f_{Q} \circ \Phi^f_{P} & \cong \Phi^f_{\O_{\Delta_X}} \\
& \cong \op{Id}_{\dabsFact{X}}
\end{align*}
where the first line comes from  Proposition~\ref{prop: FM transforms} and the second line comes from Lemma~\ref{lemma: we have the diagonal factorization}. 
\end{proof}

\begin{theorem}[Baranovsky, Pecharich]
Let $P \in \dqcoh{X \times_{\mathbb A^1} Y}$ and assume $\Phi_P$ is an equivalence.   Then $\Phi^f_{P}$ is an equivalence.
\end{theorem}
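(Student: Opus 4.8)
The plan is to deduce the equivalence almost for free from the preceding fully-faithfulness theorem, by observing that $\Phi_P$ \emph{and} its quasi-inverse are both integral transforms along kernels to which that theorem applies, and then closing with a short purely categorical argument. The only substantive geometric input is the identification of the adjoint kernel, which is already in force in the proof of the fully-faithfulness theorem.

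First I would record that an equivalence is in particular fully-faithful, so the previous theorem immediately gives that $\Phi^f_P$ is fully-faithful. Next, let $Q := \mathbf{R}\mathcal{H}om(P, \O_{X \times_{\mathbb A^1} Y}) \otimes \pi_X^*\omega_X$ be the kernel appearing in the proof of that theorem, now regarded (after the evident swap of factors) as a kernel on $Y \times_{\mathbb A^1} X$ defining a transform $\Phi_Q : \dqcoh{Y} \to \dqcoh{X}$. As used there, Grothendieck duality identifies $\Phi_Q$ with (a shift of) the adjoint of $\Phi_P$; since $\Phi_P$ is an equivalence, its adjoint is a quasi-inverse, so $\Phi_Q$ is itself an equivalence, in particular fully-faithful, and applying the previous theorem to $Q$ gives that $\Phi^f_Q$ is fully-faithful. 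Finally, the chain of isomorphisms obtained at the end of the proof of the previous theorem — namely $\Phi^f_Q \circ \Phi^f_P \cong \Phi^f_{\O_{\Delta_X}} \cong \op{Id}_{\dabsFact w}$, got from $P \star Q \cong \O_{\Delta_X}$ via Proposition~\ref{prop: FM transforms} and Lemma~\ref{lemma: we have the diagonal factorization} — supplies a one-sided inverse to $\Phi^f_P$.

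It then remains to invoke the elementary fact that if $F$ and $G$ are fully-faithful exact functors between triangulated categories with $G \circ F \cong \op{Id}$, then $F$ and $G$ are mutually quasi-inverse equivalences: indeed $G$ is an equivalence onto its essential image, the isomorphism $G \circ F \cong \op{Id}$ forces that image to be all of the target, so $G$ is essentially surjective and hence an equivalence, whence $F \cong G^{-1}$ is too. Applying this with $F = \Phi^f_P$, $G = \Phi^f_Q$ finishes the proof. The step I expect to require the most care is the identification of $\Phi_Q$ with the adjoint of $\Phi_P$ in the smooth Deligne–Mumford generality — i.e. the compatibility of Grothendieck duality with the integral-transform formalism used here — but this is precisely the input already invoked in the proof of the preceding theorem and in \cite{BP}, so no genuinely new obstacle arises. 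Alternatively, one can mirror \cite{BP} more literally and also establish $Q \star P \cong \O_{\Delta_Y}$, using that the kernel of an equivalence is invertible under convolution, and conclude directly from $\Phi^f_P \circ \Phi^f_Q \cong \op{Id}_{\dabsFact v}$ and $\Phi^f_Q \circ \Phi^f_P \cong \op{Id}_{\dabsFact w}$; in that route the main obstacle is instead the representability and uniqueness of Fourier–Mukai kernels for smooth Deligne–Mumford stacks.
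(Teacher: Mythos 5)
Your argument is correct, and your primary route differs mildly but genuinely from the paper's. The paper's proof is your ``alternative'' route: it observes that when $\Phi_P$ is an equivalence both convolutions are isomorphic to the structure sheaf of the diagonal, so by symmetry with the fully-faithfulness proof one gets $\Phi^f_Q \circ \Phi^f_P \cong \op{Id}$ \emph{and} $\Phi^f_P \circ \Phi^f_Q \cong \op{Id}$, i.e.\ a two-sided quasi-inverse on the nose. Your main route instead uses only the one-sided identity $\Phi^f_Q \circ \Phi^f_P \cong \op{Id}$ already produced in the fully-faithfulness proof, supplements it with fully-faithfulness of $\Phi^f_Q$ (obtained by applying the previous theorem to $Q$, after identifying $\Phi_Q$ with the adjoint of the equivalence $\Phi_P$ via the same Grothendieck-duality input), and closes with the standard categorical lemma that fully-faithful $F, G$ with $G \circ F \cong \op{Id}$ are mutually quasi-inverse; that lemma and your verification of it are correct (indeed $G\circ F\cong \op{Id}$ already forces $G$ essentially surjective, so $G$ is an equivalence and $F \cong G^{-1}$). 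What your route buys is that you never need to verify the second convolution identity $P \star Q \cong \O_{\Delta_Y}$ geometrically; what it costs is a second invocation of the fully-faithfulness theorem for the swapped kernel, which in turn requires the adjoint identification to know $\Phi_Q$ is fully-faithful --- an input the paper's symmetric argument sidesteps because it works directly with kernels rather than with adjoints. Both are sound; no gap.
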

\begin{proof}
In the previous proof we showed that if $\Phi_{i_*Q \star i_*P} \cong \op{Id}_{\dqcoh{X}}$ then
$\Phi^f_{i_*Q \star i_*P} \cong \op{Id}_{\dabsFact{X}}$.  
The reverse direction is completely symmetric i.e.  if $\Phi_{i_*P \star i_*Q} \cong \op{Id}_{\dqcoh{Y}}$ then
$\Phi^f_{i_*P \star i_*Q} \cong \op{Id}_{\dabsFact{Y}}$.  This gives the result.
\end{proof}


\end{document}